\documentclass{amsart}%[oneside, 12pt]{amsart}
\usepackage{tikz}
\usepackage{xcolor}
\usepackage{amssymb,latexsym,amsmath}
\usepackage{amsthm}
\usepackage{graphicx,mathrsfs,comment}
\usepackage{hyperref,url}
\usepackage{pict2e}
\usepackage{enumerate}

\usepackage{bm}

\usepackage{cancel}

\usepackage{amstext}

\numberwithin{equation}{section}

\newcommand{\orcid}[1]{\href{https://orcid.org/#1}{\texttt{ORCID: #1}}}

\usepackage{esint}

\newtheorem{theorem}{Theorem}[section]
\newtheorem{lemma}[theorem]{Lemma}

\newtheorem{proposition}[theorem]{Proposition}
\newtheorem{remark}[theorem]{Remark}
\newtheorem*{remark*}{Remark}

\newtheorem{corollary}[theorem]{Corollary}

\makeatletter
\newcommand{\barredsum}{%
  \DOTSB\mathop{\mathpalette\@barredsum\relax}\slimits@
}
\newcommand{\@barredsum}[2]{%
  \begingroup
  \sbox\z@{$#1\sum$}%
  \setlength{\unitlength}{\dimexpr2pt+\ht\z@+\dp\z@\relax}%
  \@barredsumthickness{#1}%
  \vphantom{\@barredsumbar}%
  \ooalign{$\m@th#1\sum$\cr\hidewidth$#1\@barredsumbar$\hidewidth\cr}%
  \endgroup
}
\newcommand{\@barredsumbar}{%
  \vcenter{\hbox{\begin{picture}(0,1)\roundcap\Line(0,0)(0,1)\end{picture}}}%
}
\newcommand{\@barredsumthickness}[1]{% see https://tex.stackexchange.com/a/477200/
  \linethickness{%
    1.25\fontdimen8
      \ifx#1\displaystyle\textfont\else
      \ifx#1\textstyle\textfont\else
      \ifx#1\scriptstyle\scriptfont\else
      \scriptscriptfont\fi\fi\fi 3
  }%
}
\makeatother

\newcommand{\cB}{{\mathcal B}}

\newcommand{\R}{\mathbb{R}}

\newcommand{\dist}{{\rm dist}}

\newcommand{\supp}{{\rm supp}}

\usepackage{mathtools,booktabs,microtype}
\allowdisplaybreaks[2]
\newcommand{\Nr}{\mathcal N}
\newcommand{\Rap}{\operatorname{RapDec}}
\newcommand{\norm}[2]{\left\lVert #1\right\rVert_{#2}}

\newcommand{\Err}{\operatorname{Err}}

\newcommand{\dd}{\,d}
\hypersetup{pdftitle={Cone Restriction Estimates under Half-Scale Dimensional Conditions},
 pdfauthor={Xiangyu Wang},
 colorlinks=false,
 pdfborder={0 0 1},
 linkbordercolor={1 0 0},
 citebordercolor={0 0.7 0},
 urlbordercolor={0 0.7 0}}

\begin{document}
\title[Cone Restriction Estimates under Half-Scale Dimensional Conditions]
{Cone Restriction Estimates under Half-Scale Dimensional Conditions}
\author{Xiangyu Wang}
\address{Xiangyu Wang \\ Department of Mathematics University of Illinois Urbana-Champaign\\ \orcid{0009-0003-5983-6961}}
\email{xw70@illinois.edu}
\begin{abstract}
We study weighted Fourier extension estimates for the truncated cone. Building on the framework of Du--Li--Wang--Zhang, we introduce a more refined trilinear estimate. We obtain this estimate by applying weighted bilinear restriction on half-scale cubes and combining tube traversal with trilinear Kakeya to sum the local packet energies. 
\end{abstract}
\maketitle

\section{Introduction}\label{sec:intro}
For $n\ge2$, let
\[
 Ef(x,t):=\int_{1\le|\xi|\le2}
       e^{i(x\cdot\xi+t|\xi|)}f(\xi)\dd\xi,
 \qquad (x,t)\in\R^n\times\R.
\]
We write $B_R$ for an arbitrary ball of radius $R$ in $\R^{n+1}$ and use the convention
\[
 \norm{Ef}{L^p(B_R,H)}
 =\left(\int_{B_R}|Ef(z)|^pH(z)\dd z\right)^{1/p}.
\]
Throughout the article, $m$ denotes the \emph{weight dimension}; it need not be an integer. The dimension of a supporting variety or an affine plane is denoted by $\ell$. For $0<m\le n-1$, our weight hypothesis is
\begin{equation}\label{eq:weight}
 \begin{gathered}
 0\le H\le1,\qquad
 \int_{B(z,r)}H(w)\dd w\le C_Hr^m,\\
 z\in\R^{n+1},\qquad 1\le r\le R^{1/2}.
 \end{gathered}
\end{equation}
The upper limit $R^{1/2}$ is part of the hypothesis. The total mass in $B_R$ need not be $O(C_HR^m)$; a cover by $R^{1/2}$-balls gives only
\begin{equation*}
 \int_{B_R}H\lesssim C_HR^{(n+1+m)/2}.
\end{equation*}

Du--Li--Wang--Zhang \cite{DLWZ} proved weighted estimates for the paraboloid under a dimensional condition at these intermediate scales. Their argument combines the multilinear refined Strichartz estimates developed in \cite{DGLZ}, broad--narrow analysis, and a discrete induction which records both the number of spatial cubes and their density. Building on this framework, we introduce a more refined trilinear estimate for the cone. We also use the multilinear refined Strichartz estimates of Harris \cite{HarrisRS} and the narrow decoupling theorem in \cite[Theorem 7.1]{GLMX}.

Related cone inductions and weighted arguments appear in \cite{HarrisDecay,DHL}. Lorentz rescaling introduces one long axis, $n-1$ intermediate axes, and an uncontracted direction. A new-scale cell can consequently be thinner than an old spatial cube. We retain this overlap multiplicity in the incidence counts. All narrow pigeonholing takes place on nonnegative local norms \emph{after} decoupling; no decoupling assertion is applied to a spatially masked oscillatory sum.

To prove the trilinear estimate, we apply the weighted bilinear theorem of Cho--Ham--Lee \cite[Theorem 2.2]{CHL} locally on the $R^{1/2}$-cubes. Two complementary bounds for the associated packet energies, one from tube traversal and one from trilinear Kakeya, improve the broad estimate while leaving the Lorentz narrow induction unchanged. The full-scale Frostman hypothesis of the bilinear theorem is verified separately on each parent cube.

Our examples use two related lattice mechanisms. A construction in null coordinates follows the phase-alignment principle in \cite[Section 3]{DLWZ}. An arithmetic construction uses integer points on a lower-dimensional cone, as in \cite[Proposition 5.5]{HarrisRS} and \cite[Section 5]{DHL}. We calculate their consequences for general $L^p$ norms under \eqref{eq:weight}, keeping the half-scale ball condition distinct from a condition imposed up to radius $R$. These examples do not establish sharpness of all the upper bounds below.

\subsection{The main estimates}
Put
\[
 q=\frac{2(n+1)}{n-1}.
\]
For $3\le k\le n+1$, define
\begin{align*}
 A_{n,m,k}(p)
  &=\frac1{2k}+\frac{m}{2(n+1)}
        -\frac{n+1+m}{2}\left(\frac12-\frac1p\right),\\
 N_{n,m,k}(p)
  &=\frac12-\frac{n-m}{2(k-1)}
        -\frac{m+1}{2}\left(\frac12-\frac1p\right),\\
 C_{n,m}(p)
  &=\frac{m}{2(n+1)}-\frac m2\left(\frac12-\frac1p\right).
\end{align*}
The functions $A$ and $C$ arise from the two ranges in the broad counting argument; $N$ is the condition imposed by the narrow induction.

Let $n\ge3$, $0<m\le n-1$, and $2\le p\le3$, and put
\begin{align*}
 D_{n,m}(p)&=\max\left\{0,\frac mp-\frac{n-1+2m}{8}\right\},
       \\
 T_{n,m}(p)&=\frac12D_{n,m}(p)+\frac3{2p}-\frac12,
       \\
 \beta^{\mathrm{tri}}_{n,m}(p)&=
       \max\left\{T_{n,m}(p),\frac{m+1}{2p}-\frac{n-1}{4},0\right\}.
\end{align*}

\begin{theorem}\label{thm:full}\label{thm:main}\label{thm:bilinear-refinement}
Let $n\ge2$ and $0<m\le n-1$, and suppose that $H$ satisfies \eqref{eq:weight}. For every $\varepsilon>0$, the following estimates hold.

\textup{(i)} For $2\le p\le q$ and $3\le k\le n+1$,
\begin{equation}\label{eq:full}
 \norm{Ef}{L^p(B_R,H)}
 \lesssim_{n,m,p,k,\varepsilon} C_H^{1/p}
 R^{\max\{A_{n,m,k}(p),N_{n,m,k}(p),C_{n,m}(p)\}+\varepsilon}
 \norm f2.
\end{equation}

\textup{(ii)} For $n\ge3$ and $2\le p\le3$,
\begin{equation}\label{eq:refined-full}
 \norm{Ef}{L^p(B_R,H)}
 \lesssim_{n,m,p,\varepsilon}C_H^{1/p}
       R^{\beta^{\mathrm{tri}}_{n,m}(p)+\varepsilon}\norm f2.
\end{equation}
For $p\ge3$, one may take the power of $R$ to be $\varepsilon$. No tangency assumption is required, and the same conclusion holds with $H\,dz$ replaced by a measure satisfying \eqref{eq:measure-weight}.

The constants are independent of $R\ge1$, the center of $B_R$, $f$, and $H$. For $p\ge q$, the classical estimate gives power zero, without an $R^\varepsilon$ loss.
\end{theorem}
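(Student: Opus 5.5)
The plan is to follow the Du--Li--Wang--Zhang scheme adapted to the cone, and to run an induction on $R$. At scale $K^{-1}$ we decompose the frequency annulus into sectors and split $B_R$ into $R^{1/2}$-cubes $Q$ (the largest scale at which \eqref{eq:weight} gives information). The induction hypothesis is the estimate \eqref{eq:full} (respectively \eqref{eq:refined-full}) at every smaller radius, with the constant $C_H^{1/p}$ carried along. The bound is homogeneous in $H$, so we may assume $C_H=1$. Also $H\le 1$ and $\|Ef\|_{L^p(B_R)}\lesssim \|f\|_2$ for $p\ge q$ by Strichartz, which gives the final sentence.

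On each cube $Q$ we apply a broad--narrow dichotomy in the style of Bourgain--Guth.

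\emph{Narrow case.} The significant sectors cluster near a lower-dimensional subcone. We use the narrow decoupling theorem \cite[Theorem 7.1]{GLMX} and only afterwards pigeonhole on nonnegative local norms. Each sector is then Lorentz rescaled to a cone problem at radius $R/K^2$. The rescaled weight again satisfies a ball condition up to the new half-scale, with a constant whose power of $K$ we must track. The anisotropy matters here: one long axis, $n-1$ axes of intermediate length, and one uncontracted direction. Because of this, a new cell may be thinner than an old cube, and we count this overlap multiplicity explicitly. Closing the induction requires the exponent to be at least $N_{n,m,k}(p)$; this is exactly where the term $\frac{n-m}{2(k-1)}$ enters, through the number of sectors near a $(k-1)$-dimensional plane.

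\emph{Broad case, part (i).} We apply the $k$-linear refined Strichartz estimate of Harris \cite{HarrisRS}. Next we dyadically pigeonhole the cubes $Q$ by the size of their wave-packet energy and by how many of them meet $\operatorname{supp}H$ with comparable mass. Write $M$ for the number of relevant cubes and $\gamma$ for their density along tubes. Hölder's inequality then gives two bounds, according to whether the weight mass $R^{m/2}$ per cube or the total mass $\lesssim R^{(n+1+m)/2}$ is the binding constraint. These two bounds are $A_{n,m,k}(p)$ and $C_{n,m}(p)$, and part (i) follows by taking the maximum.

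\emph{Broad case, part (ii).} This part ($n\ge3$, $2\le p\le3$) replaces the $k$-linear step by a trilinear argument. On each cube $Q$ we apply the weighted bilinear theorem of Cho--Ham--Lee \cite[Theorem 2.2]{CHL} to pairs of transverse pieces. This requires checking its full-scale Frostman hypothesis for the restriction of $H$ to $Q$. That check is immediate, because within $Q$ every radius is at most $R^{1/2}$. The local bilinear bound is expressed through the $L^2$ packet energies of the tubes crossing $Q$.

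We then sum these energies over $Q$ in two ways. The first uses tube traversal: each $R^{1/2}\times R$ tube meets $\lesssim R^{1/2}$ cubes, which yields the $\frac{3}{2p}-\frac12$ contribution. The second uses trilinear Kakeya (Bennett--Carbery--Tao, or Guth's endpoint) to control triple overlaps; this yields the $D_{n,m}(p)$ gain. Interpolating between the two bounds gives $T_{n,m}(p)$. The narrow part is unchanged and contributes $\frac{m+1}{2p}-\frac{n-1}{4}$. For $p\ge3$ the trilinear estimate is already $\varepsilon$-lossless, so the exponent $\varepsilon$ follows from the same argument. The extension to measures is formal, since only ball-mass bounds were used.

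The main obstacle I expect is the broad trilinear summation. Combining the local bilinear weighted bounds (which hold per cube) with trilinear Kakeya (which holds globally over tubes) requires a careful pigeonholing. Specifically, we must make the packet energies and the weight masses uniform at the same time, so that Hölder's inequality is sharp, and we must identify exactly when $D=0$. My first attempt would be to pigeonhole the tube multiplicity $\mu$ per cube and then optimize over $\mu$ between the two bounds.
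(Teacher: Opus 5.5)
\textbf{The central gap is your induction hypothesis.} You induct on the weighted inequality itself. In the narrow case you decouple and then apply the hypothesis with a rescaled weight. This step does not work for $k\ge4$.

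Theorem~\ref{thm:decoupling} is an unweighted inequality on $K^2$-cubes. A weight satisfying \eqref{eq:weight} is arbitrary below scale $K^2$: a single unit ball is admissible, and the narrow caps can interfere constructively there, so weighted decoupling fails. Moreover, $Ef$ is only locally constant at unit scale. So every passage from $\norm{Ef}{L^p(B,H)}$ to an unweighted local norm and back costs $K^{O(1)}$. Repeated over $\sim\delta_1^{-1}$ scales, this is fatal.

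\emph{When a direct weighted induction does work.} For $k=3$ there are only $O(1)$ narrow caps, so the triangle inequality replaces decoupling. With a rescaled weight of ball constant $\lesssim K^{m-n}C_H$, a direct weighted induction closes its narrow step exactly when $\beta\ge\frac{m+1}{2p}-\frac{n-1}{4}$. Your outline of (ii) is therefore viable in principle. For $k\ge4$, the same triangle-inequality route over the $\lesssim K^{k-3}$ narrow caps requires $\beta\ge\frac{m+1}{2p}-\frac{n-1}{4}+\frac{k-3}{2}(1-\frac1p)$. This exceeds $N_{n,m,k}$, for example when $m=n-1$.

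\emph{The missing idea} is Proposition~\ref{prop:discrete}. One inducts on $\norm{Ef}{L^{r_k}(Y)}$ over equal-norm unions of $M$ lattice $K^2$-cubes. The quantity $\gamma(Y)$ there is the half-scale ball-count density, not a density along tubes. The weight is converted only once, at cost $K^{2m/r_k}$ (Section~\ref{subsec:weighted-reduction}). In the narrow step, the rescaled weight is replaced by the rescaled cell families $L_\tau Y_i$. These carry their own $M_1,\gamma_1$, which are tied to $M,\gamma$ by the two incidence counts \eqref{eq:muratio}.

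Accordingly, $\frac{n-m}{2(k-1)}$ does not come from counting sectors near a $(k-1)$-plane; at $s=r_k$ decoupling costs only $K^\eta$. It comes from the identity $\frac{n+1}{r_k}-\frac{n-1}2+(m+1)(\frac1p-\frac1{r_k})=2N_{n,m,k}(p)$, that is, from the term $\frac{n-m}{r_k}$.

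\emph{Scale of the dichotomy.} The broad--narrow split cannot be made on $R^{1/2}$-cubes. There a product of local norms cannot be converted into the norm of a product, because H\"older goes the wrong way. It must be made on $K^2$-cubes, where Lemma~\ref{lem:translations} does this at cost $K^{O(1)}$. The $R^{1/2}$-cubes enter only as parents: in (i), Theorem~\ref{thm:multilinear} is applied to $N=M/\lambda$ parents with $\lambda\lesssim\min\{M,\gamma R^{m/2}\}$.

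\textbf{In (ii), your attribution of the exponents is wrong.} $D_{n,m}(p)$ does not come from trilinear Kakeya. It is the Cho--Ham--Lee exponent from Lemma~\ref{lem:CHL-unit} applied on balls of radius $S=R^{1/2}$, giving $R^{D_{n,m}(p)/2}G_Q$ on each parent cube, where $G_Q=\prod_i\mathcal E_i(Q)^{1/6}$. The term $\frac3{2p}-\frac12$ comes from interpolating two bounds: tube traversal gives $\|G_Q\|_{\ell^2}\lesssim R^{1/4+O(\delta_2)}$, and trilinear Kakeya gives $\|G_Q\|_{\ell^3}\lesssim R^{\eta}$. So $T_{n,m}=\frac12D_{n,m}+\frac3{2p}-\frac12$ is a sum, not an interpolation between $D$ and $\frac3{2p}-\frac12$.

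The obstacle you anticipate does not arise. The Frostman constant is uniform in $Q$, so no joint pigeonholing of multiplicities is needed; the $\ell^p$ sum of $G_Q$ is just H\"older. For $p>3$ one interpolates $\beta^{\mathrm{tri}}_{n,m}(3)=0$ with \eqref{eq:Linfty}; nothing there is $\varepsilon$-free.

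\textbf{Frostman condition, measures, and normalization.} Lemma~\ref{lem:CHL-unit} needs $[\nu]_m$ at all radii. For $H\mathbf 1_Q$, radii below $1$ need $H\le1$, and \eqref{eq:measure-weight} gives nothing there. So the measure case is not formal in your route. The paper instead applies the lemma to $\nu_Q=\mathbf 1_{Y\cap Q}\,dz$, with $[\nu_Q]_m\lesssim K^{2(n+1)}\gamma$, which uses only Lebesgue density. Likewise, normalizing $C_H=1$ conflicts with $H\le1$ when $C_H<1$. Also, $H\le1$ alone does not give the factor $C_H^{1/p}$ for $p\ge q$; that needs unit-scale averaging against $H\mathbf 1_{B_R}$.
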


The choices of $k$ may be combined by interpolation. Define
\[
 b_{n,m}(p)=\min_{3\le k\le n+1}
       \max\{A_{n,m,k}(p),N_{n,m,k}(p),C_{n,m}(p)\}.
\]
For $2\le p\le q$, let
\begin{equation}\label{eq:convex}
 \begin{split}
 B_{n,m}(p)=\inf\Bigl\{&\sum_{j=1}^J\lambda_jb_{n,m}(s_j):
 \lambda_j\ge0,\quad \sum_j\lambda_j=1,\\
 &\sum_j\frac{\lambda_j}{s_j}=\frac1p,
 \quad 2\le s_j\le q,\quad J<\infty\Bigr\}.
 \end{split}
\end{equation}
For $p\ge q$, put $B_{n,m}(p)=0$. This is the lower convex envelope, in $1/p$, of the bounds supplied by Theorem~\ref{thm:full}\textup{(i)}; it is not a definition of the optimal weighted extension exponent. When $m=n-1$, we retain the abbreviations
\begin{equation}\label{eq:special-notation}
 \begin{gathered}
 A_{n,k}=A_{n,n-1,k},\qquad N_{n,k}=N_{n,n-1,k},\\
 C_n=C_{n,n-1},\qquad B_n=B_{n,n-1}.
 \end{gathered}
\end{equation}

Section~\ref{sec:bilinear-refinement} proves the trilinear refinement by combining the local bilinear estimate with the two summation bounds for the packet energies.

We keep $B_{n,m}$ for the baseline envelope \eqref{eq:convex}. Define
\[
 \widetilde b_{n,m}(p)=
 \begin{cases}
 \min\{B_{n,m}(p),\beta^{\mathrm{tri}}_{n,m}(p)\},
       &n\ge3,\quad 2\le p\le3,\\
 0,&n\ge3,\quad p>3,\\
 B_{2,m}(p),&n=2.
 \end{cases}
\]
For $p=\infty$ the value is zero. Let $\widetilde B_{n,m}$ be its lower convex envelope in $1/p$:
\begin{equation}\label{eq:refined-envelope}
 \begin{split}
 \widetilde B_{n,m}(p)=\inf\Bigl\{&\sum_{j=1}^J\lambda_j
          \widetilde b_{n,m}(s_j):\quad
 \lambda_j\ge0,\quad\sum_j\lambda_j=1,\\
 &\sum_j\lambda_j/s_j=1/p,\quad 2\le s_j\le\infty,\quad J<\infty\Bigr\}.
 \end{split}
\end{equation}
Here $1/\infty=0$. Interpolation gives
\begin{equation}\label{eq:combined-upper}
 \norm{Ef}{L^p(B_R,H)}\lesssim_{n,m,p,\varepsilon}
 C_H^{1/p}R^{\widetilde B_{n,m}(p)+\varepsilon}\norm f2
 \qquad(2\le p<\infty).
\end{equation}
We also write $\widetilde B_n=\widetilde B_{n,n-1}$. These are upper-bound envelopes, not definitions of optimal exponents.

\subsection{Consequences}

\begin{corollary}[A sharp range in $\R^4$]\label{cor:sharp-R4}
Suppose $n=3$ and $m=2$. Then
\begin{equation}\label{eq:sharp-R4}
 \norm{Ef}{L^p(B_R,H)}\lesssim_{p,\varepsilon}C_H^{1/p}
 R^{\max\{0,\frac3{2p}-\frac12\}+\varepsilon}\norm f2
 \qquad\left(\frac83\le p<\infty\right).
\end{equation}
The power $\max\{0,3/(2p)-1/2\}$ is sharp up to $R^\varepsilon$. In particular,
\begin{equation*}
 \norm{Ef}{L^{8/3}(B_R,H)}
       \lesssim_\varepsilon C_H^{3/8}R^{1/16+\varepsilon}\norm f2.
\end{equation*}
Together with the baseline small-$p$ endpoint, this also gives
\begin{equation}\label{eq:R4-refined-originalp}
 \norm{Ef}{L^{12/5}(B_R,H)}
       \lesssim_\varepsilon C_H^{5/12}R^{9/56+\varepsilon}\norm f2.
\end{equation}
The last estimate is not asserted to be sharp. The same upper bounds and obstructions apply to the measure formulation.
\end{corollary}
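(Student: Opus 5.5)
The plan is to specialize Theorem~\ref{thm:full}\textup{(ii)} to $n=3$, $m=2$, and then to supply a matching lower-bound example. For the upper bound I first evaluate the exponent. With $n=3$, $m=2$ we have
\[
D_{3,2}(p)=\max\Bigl\{0,\tfrac2p-\tfrac34\Bigr\},
\]
which vanishes exactly when $p\ge 8/3$. On $8/3\le p\le3$ this gives $T_{3,2}(p)=\tfrac3{2p}-\tfrac12$. The second term $\tfrac{m+1}{2p}-\tfrac{n-1}{4}$ equals $\tfrac3{2p}-\tfrac12$ as well. Hence
\[
\beta^{\mathrm{tri}}_{3,2}(p)=\max\Bigl\{0,\tfrac3{2p}-\tfrac12\Bigr\}
\qquad\text{on } 8/3\le p\le 3 .
\]
Theorem~\ref{thm:full}\textup{(ii)} then yields \eqref{eq:sharp-R4} on this range. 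For $p\ge3$ the theorem already gives power $\varepsilon$, which agrees with $\max\{0,3/(2p)-1/2\}=0$ there. At $p=8/3$ the exponent is $9/16-1/2=1/16$, which is the displayed endpoint estimate.

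For sharpness I would use a Knapp example adapted to the weight. Let $f$ be the indicator of a sector of $\{1\le|\xi|\le2\}$ with radial length $\sim1$ and angular width $R^{-1/2}$ in both angular directions. Then $\|f\|_2\sim R^{-1/2}$. Moreover $|Ef|\gtrsim\|f\|_1\sim R^{-1}$ on a plank $P$ of dimensions $R\times R^{1/2}\times R^{1/2}\times 1$ inside $B_R$. Inside $P$ I place a weight $H$ (or a measure) of dimension $2$. On each of the $\sim R^{1/2}$ disjoint $R^{1/2}$-balls meeting $P$ along its long axis, take a $1$-neighbourhood of a $2$-plane piece lying in the slab $P$, with mass $\sim R$. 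This satisfies \eqref{eq:weight} with $m=2$ for all $1\le r\le R^{1/2}$; this is where the half-scale nature of the hypothesis is used. The total mass is $\sim R^{3/2}$, so
\[
\|Ef\|_{L^p(B_R,H)}/\|f\|_2\gtrsim R^{-1}R^{3/(2p)}R^{1/2}=R^{\frac3{2p}-\frac12}.
\]
For the zero power, take $f$ to be a single smooth bump and $H$ a unit mass near the origin. The same constructions serve for the measure formulation.

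For \eqref{eq:R4-refined-originalp}, I would use the convex envelope \eqref{eq:refined-envelope} and interpolate in $1/p$. One endpoint is $p=8/3$ with exponent $1/16$. The other is the baseline point $s$ of $B_{3,2}$ obtained from Theorem~\ref{thm:full}\textup{(i)}, which supplies the small-$p$ endpoint. The main obstacle is this bookkeeping. I first compute $b_{3,2}$ by optimizing over $k\in\{3,4\}$ and locating where $A$, $N$, $C$ cross. A quick check shows that interpolating with $p=2$ alone does not give $9/56$, so the relevant endpoint is an interior crossing point $s\in(2,12/5)$. I would solve for that crossing and verify that the chord from $(1/s,b_{3,2}(s))$ to $(3/8,1/16)$ passes through $(5/12,9/56)$. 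Sharpness is not claimed there.
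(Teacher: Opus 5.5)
Your proposal is correct and follows the paper's argument. The upper bound is Theorem~\ref{thm:full}\textup{(ii)} with $\beta^{\mathrm{tri}}_{3,2}(p)=\frac3{2p}-\frac12$ on $[8/3,3]$. Sharpness uses the same $R\times R^{1/2}\times R^{1/2}\times 1$ lightplank carrying a weight of total mass $R^{3/2}$; the paper places an $R^{1/4}$-spaced lattice of unit boxes in the $y$-variables instead of your $2$-plane pieces, which makes no difference here. One small correction: either weight obeys the $m=2$ ball condition at every radius, so this example does not actually exploit the half-scale restriction. The estimate \eqref{eq:R4-refined-originalp} is interpolation along the refined envelope. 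To finish your bookkeeping, take $s=36/17$, where $A_{3,2,4}=N_{3,2,4}=7/24$, so $b_{3,2}(36/17)=7/24$. The chord from $(17/36,7/24)$ to $(3/8,1/16)$ has slope $33/14$ and intercept $-23/28$, and hence equals $9/56$ at $1/p=5/12$.
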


\begin{corollary}[Measure weights]\label{cor:measure}
Let $\mu$ be a nonnegative locally finite Borel measure on $\R^{n+1}$ satisfying
\begin{equation}\label{eq:measure-weight}
 \mu(B(z,r))\le C_\mu r^m
 \qquad(z\in\R^{n+1},\ 1\le r\le R^{1/2}).
\end{equation}
For $n\ge2$, $0<m\le n-1$, and $2\le p<\infty$,
\begin{equation*}
 \norm{Ef}{L^p(B_R,d\mu)}
 \lesssim_{n,m,p,\varepsilon}
  C_\mu^{1/p}R^{\widetilde B_{n,m}(p)+\varepsilon}\norm f2.
\end{equation*}
The loss $R^\varepsilon$ is unnecessary for $p\ge q$. No condition at radii below $1$ is required.
\end{corollary}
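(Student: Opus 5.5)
The corollary is a reduction to the weighted-function case. I would mollify $\mu$ at unit scale, transfer the extension estimate to the resulting weight $H$, and then apply \eqref{eq:combined-upper}. The paper already notes in Theorem~\ref{thm:full} that the measure formulation is covered, so the point is to make this reduction explicit.

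\emph{Step 1: comparing $|Ef|^p$ with its unit-scale average.} Take $\phi$ to be a Schwartz function with $\hat\phi\equiv1$ on the frequency support $\{(\xi,|\xi|):1\le|\xi|\le2\}$ of $Ef$. Then $Ef=Ef*\phi$. H\"older's inequality against $|\phi|$ gives the pointwise bound
\[
|Ef(z)|^p\lesssim \int|Ef(w)|^p\,|\phi(z-w)|\,dw .
\]
This is the step where the absence of any hypothesis on $\mu$ below radius $1$ is absorbed.

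\emph{Step 2: building the weight.} Integrate the bound from Step 1 against $\mu$ over $B_R$. By Fubini,
\[
\int_{B_R}|Ef|^p\,d\mu\lesssim\int|Ef(w)|^p\,\widetilde H(w)\,dw,
\qquad \widetilde H=(\mu\llcorner B_R)*|\phi|.
\]
Split $\widetilde H$ dyadically: the part with $|z-w|\le R^{\varepsilon}$, and the parts with $|z-w|\sim 2^j R^{\varepsilon}$. Rapid decay of $\phi$ makes the far parts negligible, so $w$ may be restricted to $B_{2R}$ up to a $\Rap(R)\norm f2^p$ error. The local size bound is $\widetilde H(w)\lesssim\mu(B(w,1))+\text{tails}\lesssim C_\mu$. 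Integrating $\widetilde H$ over a ball $B(z,r)$ with $1\le r\le R^{1/2}$ gives at most $\mu(B(z,r+R^\varepsilon))\lesssim C_\mu R^{m\varepsilon}r^m$, plus tails that are controlled using the same Frostman bound at larger radii.

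\emph{Step 3: normalizing and applying the function estimate.} Put $H=\widetilde H/(C C_\mu)$. If $C_\mu\ge1$, then $0\le H\le1$ and $H$ satisfies \eqref{eq:weight} with constant $C_H\lesssim R^{O(\varepsilon)}$. If $C_\mu<1$, use $H=\min(1,\widetilde H)$ instead, after rescaling by $C_\mu$, which is legitimate because the estimate is homogeneous in $C_H$. Now apply \eqref{eq:combined-upper} on $B_{2R}$, covered by $O(1)$ balls of radius $R$. This yields
\[
\norm{Ef}{L^p(B_R,d\mu)}\lesssim C_\mu^{1/p}R^{\widetilde B_{n,m}(p)+O(\varepsilon)}\norm f2 ,
\]
and renaming $\varepsilon$ finishes the argument.

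\emph{Case $p\ge q$.} To avoid the $R^\varepsilon$ loss here, I would use the classical estimate directly. Since $\widetilde H\lesssim C_\mu$ pointwise with no $R^\varepsilon$ factor (Step 2), the Stein--Tomas bound for the cone at $p=q$ gives power zero. For $p>q$, interpolate this with the trivial $L^\infty$ bound.

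\emph{Expected obstacle.} The delicate point is the tails of the mollification at radii near $R^{1/2}$, where \eqref{eq:measure-weight} is not available beyond $R^{1/2}$. I would handle this by truncating $\phi$ to $|z|\le R^{\varepsilon}$ and bounding the remainder directly through the rapid decay of $\phi$ together with the crude mass bound $\mu(B_{2R})\lesssim C_\mu R^{(n+1+m)/2}$.
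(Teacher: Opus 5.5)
Your proof is correct, but it takes a different route from the paper's.

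\textbf{The paper's route.} The paper does not mollify. It observes that the reduction in Section~\ref{subsec:weighted-reduction}, from Proposition~\ref{prop:discrete} to the weighted bound, uses only two things: the $K^2$-cube masses $h_B=\int_BH$, and the ball condition, which gives $h\gamma\lesssim C_H$. The pointwise bound $H\le1$ is never used. So the paper replaces $h_B$ by $\mu(B)$, reruns the same reduction for both the $r_k<\infty$ and the $r_k=\infty$ inductions, and then interpolates in $L^p(d\mu)$.

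\textbf{Your route.} You treat \eqref{eq:combined-upper} as a black box. You convert $\mu$ into an admissible density $\widetilde H=(\mathbf 1_{B_R}\mu)*|\phi|$, using only that $Ef$ is locally constant at unit scale.

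\textbf{Comparison.} Your approach is modular: it needs only the statement of the function-weight theorem, including its hypothesis $0\le H\le 1$, plus an easy mollification lemma. The paper's approach avoids that lemma but requires inspecting its own proof to confirm that $H\le1$ is never used. For $p\ge q$ the two arguments coincide. The paper also bounds the unit-scale convolution of the measure by $O(C_\mu)$ and then applies the global Strichartz estimate \eqref{eq:unweighted-q}.

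\textbf{Two simplifications to your write-up.}
\begin{itemize}
\item The truncation at $|z-w|\le R^{\varepsilon}$ is harmless but unnecessary. The full $\widetilde H$ already satisfies \eqref{eq:weight} with constant $O(C_\mu)$. The main term is $\|\phi\|_1\,\mu(B(z,2r))$; near $r\sim R^{1/2}$ it is handled by a finite cover by $R^{1/2}$-balls. The rapidly decaying dyadic tail is controlled by \eqref{eq:measure-weight} out to radius $R^{1/2}$, and by the crude bound $\mu(B_R)\lesssim C_\mu R^{(n+1+m)/2}$ beyond that. With this, no $R^{O(\varepsilon)}$ enters the ball constant.
\item The case split on whether $C_\mu\ge1$ is superfluous. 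Since $\widetilde H\le CC_\mu$ pointwise, with $C$ depending only on $\phi,n,m$, the function $\widetilde H/(CC_\mu)$ is an admissible weight for every $C_\mu>0$. Multiplying back by $CC_\mu$ then gives the factor $C_\mu^{1/p}$ directly.
\end{itemize}
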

The reduction to weights in Section~\ref{subsec:weighted-reduction} uses the ball condition, not pointwise boundedness of a density. This proves the measure extension of both upper-bound families by the same reduction. Conversely, all examples in Section~\ref{sec:examples} have densities $H_R\in\{0,1\}$, so their obstructions apply to both formulations.

\begin{corollary}[A small-$p$ estimate]\label{cor:small-p-general}
Set
\begin{equation}\label{eq:small-p-general-endpoint}
 p_{n,m}=\left(\frac12-
 \min\left\{\frac{n-m}{n^2(n+1)},\frac1{(n+1)^2}\right\}\right)^{-1}.
\end{equation}
For $2\le p\le p_{n,m}$,
\begin{equation*}
 \norm{Ef}{L^p(B_R,H)}
 \lesssim_{n,m,p,\varepsilon}C_H^{1/p}
 R^{\frac{n+1+m}{2}(\frac1p-\frac12)
       +\frac{m+1}{2(n+1)}+\varepsilon}\norm f2.
\end{equation*}
If $1\le m\le n-1$, then
\[
 p_{n,m}=\left(\frac12-\frac{n-m}{n^2(n+1)}\right)^{-1}.
\]
\end{corollary}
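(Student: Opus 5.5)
The plan is to read the estimate off Theorem~\ref{thm:full}\textup{(i)} with the single choice $k=n+1$, and then check that $A$ dominates the maximum there when $p\le p_{n,m}$. Write $\delta=\frac12-\frac1p\ge0$. With $k=n+1$,
\[
 A_{n,m,n+1}(p)=\frac{1}{2(n+1)}+\frac{m}{2(n+1)}-\frac{n+1+m}{2}\,\delta
 =\frac{n+1+m}{2}\Bigl(\frac1p-\frac12\Bigr)+\frac{m+1}{2(n+1)}.
\]
This is exactly the claimed exponent. It therefore suffices to show that $\max\{A_{n,m,n+1},N_{n,m,n+1},C_{n,m}\}(p)=A_{n,m,n+1}(p)$ for $2\le p\le p_{n,m}$. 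We also need $p_{n,m}\le q$ so that part (i) applies. This holds because $\frac12-\frac1{p_{n,m}}\le\frac1{(n+1)^2}<\frac1{n+1}=\frac12-\frac1q$.

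\textbf{Comparison with $C$.} Subtracting gives
\[
 A_{n,m,n+1}(p)-C_{n,m}(p)=\frac{1}{2(n+1)}-\frac{n+1+m}{2}\delta+\frac m2\delta
 =\frac1{2(n+1)}-\frac{n+1}{2}\delta .
\]
This is nonnegative iff $\delta\le 1/(n+1)^2$.

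\textbf{Comparison with $N$.} With $k=n+1$ we have $N_{n,m,n+1}(p)=\frac12-\frac{n-m}{2n}-\frac{m+1}2\delta=\frac{m}{2n}-\frac{m+1}{2}\delta$. Hence
\[
 A_{n,m,n+1}(p)-N_{n,m,n+1}(p)=\frac{m+1}{2(n+1)}-\frac{m}{2n}-\frac n2\delta
 =\frac{n-m}{2n(n+1)}-\frac n2\delta .
\]
Here we used $n(m+1)-m(n+1)=n-m$. This is nonnegative iff $\delta\le (n-m)/(n^2(n+1))$.

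Both comparisons hold simultaneously precisely when $\delta\le\min\{\frac{n-m}{n^2(n+1)},\frac1{(n+1)^2}\}$, that is, when $p\le p_{n,m}$ by \eqref{eq:small-p-general-endpoint}. Applying \eqref{eq:full} with $k=n+1$ then gives the corollary, with constants depending only on $n,m,p,\varepsilon$.

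\textbf{The case $m\ge1$.} It remains to check which term of the minimum is active. The inequality $\frac{n-m}{n^2(n+1)}\le\frac1{(n+1)^2}$ is equivalent to $(n-m)(n+1)\le n^2$, i.e.\ $n\le m(n+1)$. This holds whenever $m\ge1$, which gives the simplified formula for $p_{n,m}$.

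The argument is pure bookkeeping once Theorem~\ref{thm:full}\textup{(i)} is available. The only point requiring care is correctly identifying $k=n+1$ as the choice that makes $A$ match the target exponent; after that, no step is a genuine obstacle.
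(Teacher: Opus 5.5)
Your proposal is correct and follows the paper's own argument in Section~\ref{sec:optimization}: take $k=n+1$ in Theorem~\ref{thm:full}\textup{(i)}, compute $A_{n,m,n+1}-C_{n,m}$ and $A_{n,m,n+1}-N_{n,m,n+1}$, and note that the first threshold in the minimum is the smaller one once $m\ge1$. Your explicit check that $p_{n,m}\le q$ is a small point the paper leaves implicit; it confirms that part~\textup{(i)} applies on the whole range.
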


\begin{corollary}[A baseline endpoint for weight dimension $n-1$]\label{cor:clean}
Let $n\ge3$, assume $m=n-1$, and set
\begin{equation*}
 k_0=\left\lfloor\frac{n+3}{2}\right\rfloor,
 \qquad p_0=\frac{2k_0(n+1)}{k_0(n+1)-2}.
\end{equation*}
For $p_0\le p\le q$,
\begin{equation*}
 \norm{Ef}{L^p(B_R,H)}
 \lesssim_{n,p,\varepsilon} C_H^{1/p}
 R^{\frac{n-1}{2}(\frac1p-\frac1{q})+\varepsilon}\norm f2.
\end{equation*}
In particular, in $\R^4$, for two-dimensional weights,
\begin{equation*}
 \norm{Ef}{L^{12/5}(B_R,H)}
  \lesssim_\varepsilon C_H^{5/12}R^{1/6+\varepsilon}\norm f2.
\end{equation*}
\end{corollary}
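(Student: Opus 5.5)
The plan is to apply Theorem~\ref{thm:full}\textup{(i)} directly with $m=n-1$ and the single choice $k=k_0$, with no interpolation. It then suffices to show that, on the range $p_0\le p\le q$, the maximum $\max\{A_{n,k_0}(p),N_{n,k_0}(p),C_n(p)\}$ equals $C_n(p)$, and that $C_n(p)$ is exactly the claimed power.

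\emph{Admissibility of $k_0$ and the range.} Since $n\ge3$, we have $(n+3)/2\ge3$, so $k_0\ge3$; also $k_0\le n+1$. Hence $k_0$ is admissible in Theorem~\ref{thm:full}\textup{(i)}. Next,
\[
 \frac1{p_0}=\frac12-\frac1{k_0(n+1)}\ \ge\ \frac12-\frac1{n+1}=\frac1q,
\]
so $p_0\le q$ and the interval $[p_0,q]$ is nonempty and lies inside $[2,q]$.

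\emph{The $C$ term.} With $m=n-1$,
\[
 C_n(p)=\frac{n-1}2\Bigl(\frac1p-\frac12+\frac1{n+1}\Bigr)
 =\frac{n-1}2\Bigl(\frac1p-\frac{n-1}{2(n+1)}\Bigr)
 =\frac{n-1}2\Bigl(\frac1p-\frac1q\Bigr),
\]
which is the asserted power.

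\emph{The $A$ term.} Since $(n+1+m)/2-m/2=(n+1)/2$,
\[
 A_{n,k}(p)-C_n(p)=\frac1{2k}-\frac{n+1}2\Bigl(\frac12-\frac1p\Bigr).
\]
This is $\le0$ exactly when $\frac1p\le\frac12-\frac1{k(n+1)}$. For $k=k_0$ this condition is precisely $p\ge p_0$, which explains the definition of $p_0$.

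\emph{The $N$ term.} With $m=n-1$, a short computation gives
\[
 N_{n,k}(p)-C_n(p)=\frac1{n+1}-\frac1{2(k-1)}-\frac12\Bigl(\frac12-\frac1p\Bigr).
\]
Because $p\ge2$, the last term is nonpositive. So it suffices that $2(k-1)\le n+1$, i.e.\ $k\le(n+3)/2$. This holds for $k_0=\lfloor (n+3)/2\rfloor$, which explains the choice of $k_0$: it is the largest admissible $k$ for which the narrow term is dominated, and taking $k$ as large as possible makes $p_0$ as small as possible.

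\emph{Conclusion.} The three comparisons show that for $p_0\le p\le q$ the exponent in \eqref{eq:full} equals $\frac{n-1}2(\frac1p-\frac1q)$, and the corollary follows from Theorem~\ref{thm:full}\textup{(i)}.

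\emph{The $\R^4$ case.} For $n=3$ we get $k_0=3$ and $p_0=24/10=12/5$. The exponent is then $1\cdot(5/12-1/4)=1/6$.

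There is no real obstacle here; the only care needed is checking the algebra of $N_{n,k}-C_n$ and confirming that $k_0$ lies in the admissible range $[3,n+1]$.
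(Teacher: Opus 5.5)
Your proof is correct and follows the paper's route: apply Theorem~\ref{thm:full}\textup{(i)} once with $k=k_0$ and check that $C_n$ dominates both $A_{n,k_0}$ and $N_{n,k_0}$ on $[p_0,q]$, the $A$--$C$ comparison being exactly the definition of $p_0$. The only difference is in the $N$--$C$ check. You drop the nonpositive term $-\frac12(\frac12-\frac1p)$, which needs only $2(k_0-1)\le n+1$ and in fact gives domination for every $p\ge2$. The paper instead verifies the endpoint $p_0$ through the quadratic $2k^2-(n+4)k+1\le0$ and then uses monotonicity. That quadratic, not your sufficient condition, is what justifies your side remark that $k_0$ is the largest admissible choice; the remark is not needed for the corollary.
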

For $n=3$, \eqref{eq:R4-refined-originalp} improves the last exponent from $1/6$ to $9/56$.

\begin{corollary}[A sharp range in $\R^3$]\label{cor:sharp-R3}
Suppose $n=2$, $2/3\le m\le1$, and
\begin{equation}\label{eq:sharp-R3-range}
 \frac{12}{m+4}\le p\le\frac6{3-m}.
\end{equation}
Then
\begin{equation*}
 \norm{Ef}{L^p(B_R,H)}
 \lesssim_{m,p,\varepsilon} C_H^{1/p}
 R^{\frac{m+1}{2p}-\frac14+\varepsilon}\norm f2.
\end{equation*}
The exponent $(m+1)/(2p)-1/4$ is sharp up to $R^\varepsilon$. In particular, for $m=1$ this gives the sharp exponent $1/p-1/4$ on $12/5\le p\le3$.
\end{corollary}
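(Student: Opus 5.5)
The upper bound is the case $n=2$, $k=3$ of Theorem~\ref{thm:full}\textup{(i)}; sharpness will come from a single-plank example.

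\textbf{Upper bound.} For $n=2$ we have $q=6$, and the only admissible value in Theorem~\ref{thm:full}\textup{(i)} is $k=3$. The assumption \eqref{eq:sharp-R3-range} gives $p\le 6/(3-m)\le 3<q$, so \eqref{eq:full} applies. I would evaluate the three exponents explicitly:
\begin{align*}
 N_{2,m,3}(p)&=\frac12-\frac{2-m}{4}-\frac{m+1}{2}\Bigl(\frac12-\frac1p\Bigr)=\frac{m+1}{2p}-\frac14,\\
 A_{2,m,3}(p)&=\frac{1+m}{6}-\frac{3+m}{2}\Bigl(\frac12-\frac1p\Bigr),\\
 C_{2,m}(p)&=\frac{m}{2p}-\frac{m}{12}.
\end{align*}
A direct subtraction gives
\[
 N-A=\frac{m+4}{12}-\frac1p,\qquad N-C=\frac1{2p}-\frac{3-m}{12}.
\]
Hence $N\ge A$ exactly when $p\ge 12/(m+4)$, and $N\ge C$ exactly when $p\le 6/(3-m)$. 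On the range \eqref{eq:sharp-R3-range} the maximum in \eqref{eq:full} is therefore $N_{2,m,3}(p)=(m+1)/(2p)-1/4$, which is the claimed bound.

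The hypothesis $m\ge 2/3$ is exactly what makes the interval nonempty, since $12/(m+4)\le 6/(3-m)$ is equivalent to $m\ge2/3$. For $m=1$ the range becomes $12/5\le p\le3$ and the exponent becomes $1/p-1/4$.

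\textbf{Sharpness.} I would use a single-plank example; this is the main step. Take $f$ to be the indicator of a sector of the annulus $1\le|\xi|\le2$ of angular width $R^{-1/2}$, so that $\|f\|_2\sim R^{-1/4}$. By standard stationary-phase and wave packet considerations, $|Ef|\gtrsim R^{-1/2}$ on a plank $P$ of dimensions roughly $1\times R^{1/2}\times R$. The long axis of $P$ points along the null direction of the sector, and its thin direction is normal to the cone.

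Inside $P$ I would place $H_R=\mathbf 1_{S}$, where $S$ is the unit neighborhood of a set along the long axis. This set should have $m$-dimensional spacing at scales $1\le r\le R^{1/2}$, for instance a Cantor-type or lattice arrangement of unit intervals with $\sim r^m$ of them in every interval of length $r$. The arrangement is repeated with period $R^{1/2}$ along the full length $R$. Since $m\le 1$, a ball of radius $r\le R^{1/2}$ meets $S$ in measure $\lesssim r^m$, so \eqref{eq:weight} holds with $C_H\sim1$. The total mass is $|S|\sim R^{1/2}\cdot R^{m/2}$, because there are $R^{1/2}$ periods, each contributing mass $R^{m/2}$.

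With this choice,
\[
 \|Ef\|_{L^p(B_R,H_R)}\gtrsim R^{-1/2}R^{(m+1)/(2p)}.
\]
Comparing with $R^{\beta}\|f\|_2\sim R^{\beta-1/4}$ forces
\[
 \beta\ge \frac{m+1}{2p}-\frac14 .
\]
The example is indicator-valued, so by the remark after Corollary~\ref{cor:measure} it also applies to the measure formulation.

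The only point needing care is that the half-scale ball condition holds uniformly. This is where the cap at $R^{1/2}$ in \eqref{eq:weight} is essential: a condition imposed up to radius $R$ would rule out the periodic repetition. I would also verify that $|Ef|\gtrsim R^{-1/2}$ holds on the whole set $S$ and not just on most of $P$. I expect this to follow from the usual wave packet localization at radial frequency scale $1$, or it can be quoted from the null-coordinate construction in Section~\ref{sec:examples}.
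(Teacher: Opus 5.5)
Your proposal is correct and is essentially the paper's argument: the upper bound is the $n=2$, $k=3$ case of Theorem~\ref{thm:full}\textup{(i)} with exactly the differences $N-A$ and $N-C$ you compute, and the sharpness example is the lightplank of Section~\ref{subsec:lightplank}, whose weight for $m<1$ is a lattice of unit boxes with spacing $R^{(1-m)/2}$ along the long axis (for $m=1$, the whole unit tube), giving total mass $R^{(m+1)/2}$ as in your count. One small slip in your geometric description: the long axis of the plank lies along the cone normal $G(\xi)$ (itself a light-like vector), and the unit-thin direction lies along the generator $(\xi/|\xi|,1)$, which is tangent to the cone rather than normal; this does not affect the construction, since the weight sits inside the plank where the phase is $O(c)$.
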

The matching example is proved in Section~\ref{subsec:lightplank}. Sharpness here concerns the power of $R$, not removal of the $R^\varepsilon$ loss or optimality of every bound in Theorem~\ref{thm:full}.

\subsection{Support near varieties and tangency to planes}
We use $\Nr_h(Z)=\{z:\dist(z,Z)\le h\}$. A transverse complete intersection of dimension $\ell$ is a set $Z=Z(P_1,\ldots,P_{n+1-\ell})$ for which the gradients of the defining polynomials are linearly independent at every point of $Z$.

\begin{corollary}\label{cor:variety}
Let $0<m\le n-1$ and $2\le\ell\le n+1$. Suppose that $Z\subset\R^{n+1}$ is an $\ell$-dimensional transverse complete intersection with defining degrees at most $D\ge1$. Let $E\ge1$, and suppose that, in $B_R$, $Ef$ is supported in $\Nr_{E\sqrt R}(Z)$ up to a rapidly decaying error. Define
\begin{equation*}
 B_{n,m,\ell}^{\mathrm{safe}}(p)
 =\inf_{p\le s\le q}
  \left[\widetilde B_{n,m}(s)+\frac{\ell+m}{2}\left(\frac1p-\frac1s\right)\right].
\end{equation*}
For $2\le p\le q$,
\begin{equation*}
 \norm{Ef}{L^p(B_R,H)}
 \lesssim_{n,m,p,\varepsilon} C_H^{1/p}D^{O(1)}E^{O(1)}
 R^{B_{n,m,\ell}^{\mathrm{safe}}(p)+\varepsilon}\norm f2.
\end{equation*}
\end{corollary}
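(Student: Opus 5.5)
The plan is to reduce to the global estimate \eqref{eq:combined-upper} through a polynomial-partitioning / wall-counting argument. The support hypothesis lets us localize $Ef$ to $\Nr_{E\sqrt R}(Z)$, and the $\ell$-dimensionality of $Z$, combined with the $m$-dimensional weight condition at scale $R^{1/2}$, bounds how much weighted mass this neighbourhood can carry. Fix $s\in[p,q]$; we may take $s$ attaining the infimum up to $\varepsilon$. Write $\Nr=\Nr_{E\sqrt R}(Z)\cap B_R$. Up to the rapidly decaying error, which is harmless,
\[
\norm{Ef}{L^p(B_R,H)}=\norm{Ef}{L^p(\Nr,H)}.
\]
Hölder's inequality with exponents $s/p$ and its dual gives
\[
\norm{Ef}{L^p(\Nr,H)}\le \Bigl(\int_{\Nr}H\Bigr)^{\frac1p-\frac1s}\norm{Ef}{L^s(B_R,H)}.
\]
The last factor is bounded by $C_H^{1/s}R^{\widetilde B_{n,m}(s)+\varepsilon}\norm f2$ by \eqref{eq:combined-upper}, or by Corollary~\ref{cor:measure} in the measure formulation.

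It remains to estimate the weighted mass of the neighbourhood:
\[
\int_{\Nr}H\lesssim C_H D^{O(1)}E^{O(1)}R^{\frac{\ell+m}{2}}.
\]
Once this is in hand, the powers of $C_H$ combine to $C_H^{1/s}C_H^{1/p-1/s}=C_H^{1/p}$. The power of $R$ becomes $\widetilde B_{n,m}(s)+\frac{\ell+m}{2}(\frac1p-\frac1s)$, and taking the infimum over $s$ gives the claim.

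For the mass bound, cover $\Nr$ by balls of radius $R^{1/2}$. The required count is that $\Nr_{E\sqrt R}(Z)\cap B_R$ is covered by $\lesssim D^{O(1)}E^{O(1)}R^{\ell/2}$ such balls. Equivalently, after rescaling by $R^{-1/2}$, the set $R^{-1/2}Z$ (a transverse complete intersection of the same degrees) is covered inside a ball of radius $R^{1/2}$ by $\lesssim D^{O(1)}E^{O(1)}(R^{1/2})^{\ell}$ unit balls, after thickening by $E$. I would obtain this from the standard Wongkew-type volume bound for neighbourhoods of real algebraic varieties: $|\Nr_\rho(Z)\cap B_r|\lesssim_n D^{O(1)}\rho^{n+1-\ell}r^{\ell}$ for $\rho\le r$. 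Applying it with $\rho=E\sqrt R$ and $r=R$ gives volume $D^{O(1)}E^{n+1-\ell}R^{(n+1+\ell)/2}$. Each $R^{1/2}$-ball of a maximally disjoint family centred in $\Nr$ lies in $\Nr_{2E\sqrt R}(Z)$ and has volume $R^{(n+1)/2}$, so the number of such balls is $\lesssim D^{O(1)}E^{O(1)}R^{\ell/2}$. Doubling the balls gives a cover. Summing the weight condition \eqref{eq:weight} with $r=R^{1/2}$ over these balls yields the bound, since each ball carries mass at most $C_HR^{m/2}$.

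The main obstacle is this covering count. The point to check is that Wongkew's bound applies for an $\ell$-dimensional transverse complete intersection, with constants polynomial in $D$ and uniform in $R$. The transversality assumption guarantees that $Z$ is a smooth $\ell$-dimensional variety, so the bound holds with $D^{O(1)}$ constants. A blunter alternative is to intersect $Z$ with generic $(n+1-\ell)$-planes and count components via Bézout. Apart from this step, the remaining steps are Hölder's inequality and the already established estimate \eqref{eq:combined-upper}. No condition on $H$ at scales above $R^{1/2}$ is used, which is consistent with \eqref{eq:weight}.
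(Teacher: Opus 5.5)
Your proposal is correct and is essentially the paper's proof. Both use H\"older from $L^s(U,H)$ to $L^p(U,H)$ with $U=B_R\cap\Nr_{E\sqrt R}(Z)$, the combined estimate \eqref{eq:combined-upper}, and the Wongkew covering of $U$ by $D^{O(1)}E^{O(1)}R^{\ell/2}$ balls of radius $\sqrt R$, each carrying weight at most $C_HR^{m/2}$; despite your opening mention of polynomial partitioning, none is needed.

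The one case you leave implicit is $E\sqrt R>R$, where the hypothesis $\rho\le r$ in your Wongkew bound fails. There the elementary cover of $B_R$ by $R^{(n+1)/2}\le E^{n+1-\ell}R^{\ell/2}$ balls suffices, exactly as the paper notes.
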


Here a rapidly decaying error is bounded on $B_R$ by $O_L(R^{-L})\norm f2$ for any prescribed $L$. The corollary uses only a support neighborhood; it does not infer tangency from proximity to a variety.

For affine planes we retain the stronger conclusion for \emph{weight dimension $m=n-1$}. If $V$ is a proper linear subspace, set
\begin{equation}\label{eq:chi}
 \chi(V)=\min_{\substack{\nu=(a,b)\in V^\perp\\|\nu|=1}}
                  (|a|^2-b^2).
\end{equation}
The orthogonal complement is Euclidean. A fixed positive lower bound excludes light-cone-degenerate normal spaces.

\begin{theorem}\label{thm:plane}
Assume $m=n-1$. Let $3\le\ell\le n+1$ and let $V$ be a fixed affine $\ell$-plane whose direction space satisfies $\chi(V)\ge\chi_0>0$; this assumption is omitted when $\ell=n+1$.
Let $E\ge R^{\delta_1}$, with $\delta_1$ as in the conventions below. Suppose that $f$ is concentrated in the usual scale-$R$ wave packets that are $ER^{-1/2}$-tangent to $V$. Then, for $2\le p\le \frac{2\ell}{\ell-2}$,
\begin{equation}\label{eq:plane-general}
 \norm{Ef}{L^p(B_R,H)}
 \lesssim_{n,\ell,p,\varepsilon,\chi_0} C_H^{1/p}E^{O(1)}
  R^{\widetilde B_{\ell-1}(p)+\frac{n+1-\ell}{2}(\frac1p-\frac12)+\varepsilon}
 \norm f2,
\end{equation}
where $\widetilde B_{\ell-1}=\widetilde B_{\ell-1,\ell-2}$; for $\ell=3$ this is the unchanged two-dimensional baseline envelope. In particular, with
\begin{equation*}
 p_\ell^{\mathrm c}
 =\left(\frac12-\frac1{\ell(\ell-1)^2}\right)^{-1},
\end{equation*}
one has, for $2\le p\le p_\ell^{\mathrm c}$,
\begin{equation}\label{eq:plane-main}
 \norm{Ef}{L^p(B_R,H)}
 \lesssim_{n,\ell,p,\varepsilon,\chi_0} C_H^{1/p}E^{O(1)}
 R^{\frac{n+\ell-1}{2}(\frac1p-\frac12)+\frac{\ell-1}{2\ell}+\varepsilon}
 \norm f2.
\end{equation}
\end{theorem}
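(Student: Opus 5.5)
We reduce Theorem~\ref{thm:plane} to the lower-dimensional weighted estimate \eqref{eq:combined-upper} in the plane $V$. After an affine change of variables, take $V=\R^\ell$, i.e.\ $V=\{z=(z',z''):z''=0\}$ with $z'\in\R^\ell$. Since $\chi(V)\ge\chi_0>0$, every unit normal has a spacelike bias. Hence $V\cap\{|\xi|=\tau\}$ is, after a linear map with bounds depending on $\chi_0$, a nondegenerate $(\ell-1)$-dimensional truncated cone in $\R^{\ell}$. The frequencies $\xi$ whose wave packets are $ER^{-1/2}$-tangent to $V$ lie in an $ER^{-1/2}$-neighborhood of this section. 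When $\ell=n+1$ nothing needs to be done; we simply apply \eqref{eq:combined-upper} with $n$ in place of $\ell-1$ and $m=n-1$.

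The first step is to slice. For each fixed $z''$ with $|z''|\lesssim ER^{1/2}$, the restriction of $Ef$ to the slice $V+z''$ is an extension operator from the $(\ell-1)$-dimensional cone in $\R^\ell$. Its input is $g_{z''}$, obtained by integrating $f$ over the $R^{-1/2}$-thin normal fibres, with the phase modulated by $z''$. By Plancherel in the normal frequency variables together with the thinness of those fibres, we get $\int\|g_{z''}\|_2^2\,dz''\lesssim E^{O(1)}R^{(n+1-\ell)/2}\|f\|_2^2$. More precisely, we choose a lattice of $z''$ at spacing $\sim1$ and use local constancy at unit scale to control the lattice sum of $\|g_{z''}\|_2^2$. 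Averaging $H$ over unit balls transverse to the slice gives a weight $H_{z''}$ on $V\cap B_R$. This weight satisfies the $(\ell-2)$-dimensional half-scale condition with constant $\lesssim C_H$, because an $\ell$-ball of radius $r$ in the slice, thickened by $1$, is contained in a ball of radius $r$ in $\R^{n+1}$. This is exactly the point where $m=n-1$ is used: the weight dimension $n-1=(\ell-2)+(n+1-\ell)$ drops by the codimension. To make the drop rigorous we will instead thicken the slice by $r$ when testing radius $r$, and use $\int_{B(z,r)}H\le C_Hr^{n-1}$ to bound the mass on a typical slice by $C_Hr^{\ell-2}$. We then pigeonhole the bad slices, where the mass is concentrated, into an exceptional set whose total contribution is controlled by the full-dimensional bound.

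Next, apply \eqref{eq:combined-upper} on each slice with parameters $(\ell-1,\ell-2)$. This gives $\|Ef\|_{L^p(V+z'',H_{z''})}\lesssim C_H^{1/p}R^{\widetilde B_{\ell-1}(p)+\varepsilon}\|g_{z''}\|_2$. Summing in $L^p$ over the $\sim (ER^{1/2})^{n+1-\ell}$ lattice slices and using Hölder to pass from the $\ell^p$ sum to the $\ell^2$ sum of $\|g_{z''}\|_2$ costs $(R^{1/2})^{(n+1-\ell)(1/p-1/2)}$, which is the factor $R^{\frac{n+1-\ell}{2}(\frac1p-\frac12)}$. The restriction $p\le 2\ell/(\ell-2)$ keeps us in the range where the slice estimate carries a nonnegative power; above it, the $\ell^p\to\ell^2$ step is still fine but nothing is gained. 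Finally, \eqref{eq:plane-main} follows by inserting Corollary~\ref{cor:small-p-general} with $(n,m)=(\ell-1,\ell-2)$ into \eqref{eq:plane-general}. For this choice, $p_{\ell-1,\ell-2}=(1/2-1/((\ell-1)^2\ell))^{-1}=p_\ell^{\mathrm c}$, and adding the exponents gives $\frac{n+\ell-1}{2}(\frac1p-\frac12)+\frac{\ell-1}{2\ell}$.

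The main obstacle is the slice weight. The half-scale condition on $H$ in $\R^{n+1}$ does not pointwise give an $(\ell-2)$-dimensional condition on every slice, only on average. We would first try to avoid slicing $H$ altogether. The idea is to keep the full $(n+1)$-dimensional integral and use the tangency directly. Locally $|Ef|$ is essentially constant along the $n+1-\ell$ normal directions at scale $R^{1/2}$, up to an $E^{O(1)}$ loss, since the normal frequency support has width $ER^{-1/2}$. One can therefore replace $H$ by its average over normal $R^{1/2}$-discs. That averaged weight is genuinely $(\ell-2)$-dimensional in $V$ at scales $1\le r\le R^{1/2}$ with constant $C_HR^{-(n+1-\ell)/2}$, and the normalization then produces the stated power.
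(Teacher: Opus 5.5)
Your high-level plan matches the paper's: normalize $V$ once, use tangency to get $|\zeta|\lesssim ER^{-1/2}$ and $|x''|\lesssim E\sqrt R$, use the split $n-1=(\ell-2)+(n+1-\ell)$ between weight dimension and codimension, apply \eqref{eq:combined-upper} in dimension $\ell-1$, and deduce \eqref{eq:plane-main} from Corollary~\ref{cor:small-p-general} at $(\ell-1,\ell-2)$. Your check that $p_{\ell-1,\ell-2}=p^{\mathrm c}_\ell$ and your exponent sum are correct.

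\textbf{The slicing step is false as stated.} With $\xi=(\xi',\zeta)$, the phase is $x'\cdot\xi'+x''\cdot\zeta+t\sqrt{|\xi'|^2+|\zeta|^2}$. Its deviation $t\,d_\zeta(\xi')$ from the lower-dimensional cone phase $t|\xi'|$ is of size up to $E^2\ge R^{2\delta_1}$ on $B_R$, and it depends on both $\xi'$ and $\zeta$. So $Ef$ on $V+z''$ is not $E_{\ell-1}g_{z''}$ for a function $g_{z''}$ obtained by integrating $f$ over the fibres. The missing idea is the expansion \eqref{eq:series}. One writes $\exp(iRs\,d_\zeta(\xi'))$, with $s=t/R$, as a sum of modes $e^{i\omega_0as}e^{i\omega_0b\cdot\xi'}$ with $\sum_{a,b}\sup_\zeta|c_{a,b}(\zeta)|\lesssim E^{O(1)}$. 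Each mode is a unimodular factor in $t$ times a modulation of $f(\cdot,\zeta)$, hence a genuine $(\ell-1)$-dimensional cone extension.

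\textbf{The normalization must be Lorentz.} A general affine change of variables destroys the phase $t|\xi|$. The hypothesis $\chi(V)\ge\chi_0$ gives a tangent vector $(v,1)$ with $|v|$ bounded away from $1$. A boost of controlled norm moves it to the time axis.

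\textbf{The exponent bookkeeping also fails.} Plancherel gives $\int\|g_{z''}\|_2^2\,dz''\lesssim\|f\|_2^2$, with no factor $R^{(n+1-\ell)/2}$. For $p\ge2$, the step ``H\"older from $\ell^p$ to $\ell^2$ costs $N^{1/p-1/2}$'' runs the wrong way. In general one only has $\|a\|_{\ell^p}\le\|a\|_{\ell^2}$, whereas $N^{1/p-1/2}\le1$ would be a gain.

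The factor $R^{\frac{n+1-\ell}{2}(\frac1p-\frac12)}$ comes from the thin $\zeta$-support paired with the weight mass. In the paper, Minkowski in $\zeta$ and in the modes is applied first. Each piece then has modulus independent of $x''$, so its $L^p(H)$ norm is computed exactly with the projected weight
$H_V=\int_{|x''|\lesssim E\sqrt R}H\,dx''\le A_0=(E\sqrt R)^{n+1-\ell}$,
which satisfies \eqref{eq:projectweight} by covering. The lower-dimensional estimate then contributes $A_0^{1/p}$. Cauchy--Schwarz gives
$\int_{|\zeta|\lesssim ER^{-1/2}}\|f(\cdot,\zeta)\|_2\,d\zeta\lesssim(ER^{-1/2})^{(n+1-\ell)/2}\|f\|_2$.
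Together these yield the stated power.

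\textbf{This also removes your slice-weight obstacle.} No individual slice of $H$, no local constancy in $x''$, and no exceptional set are needed. Your exceptional-set idea was in any case unquantified. Your closing averaging idea points the right way, but two corrections are needed. The normalized normal average has $(\ell-2)$-dimensional constant $\lesssim C_H$, not $C_HR^{-(n+1-\ell)/2}$. Also, $|Ef|$ is locally constant in $x''$ only at scale $\sqrt R/E$.

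If you keep slices, replace your H\"older step, mode by mode, with
$\int\|g_{z''}\|_2^p\,dz''\le\sup_{z''}\|g_{z''}\|_2^{p-2}\int\|g_{z''}\|_2^2\,dz''$,
using $\sup_{z''}\|g_{z''}\|_2\lesssim(ER^{-1/2})^{(n+1-\ell)/2}\|f\|_2$.
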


The proof also applies when, after a fixed Lorentz transformation, one imposes the explicit frequency and spatial localization assumptions in \eqref{eq:flatconditions}. The dependence on $\chi_0$ is essential. Theorem~\ref{thm:plane} is not asserted for arbitrary varieties; the general-variety conclusion is Corollary~\ref{cor:variety}.

\subsection{Organization}
Section~\ref{sec:preliminaries} collects the analytic tools. Section~\ref{sec:discrete} reduces Theorem~\ref{thm:main}\textup{(i)} to a discrete proposition, proved in Sections~\ref{sec:broad}--\ref{sec:narrow}; Section~\ref{sec:bilinear-refinement} proves part~\textup{(ii)}. Section~\ref{sec:optimization} gives endpoint calculations and lower-dimensional estimates. Section~\ref{sec:geometric} treats general varieties and nondegenerate affine planes. Section~\ref{sec:examples} compares counterexamples and proves the sharp ranges in $\R^3$ and $\R^4$; Section~\ref{sec:scope} discusses the limitations of the tangential conclusion.

\subsection{Conventions}
We write $X\lesssim Y$ for an inequality up to a constant independent of the scale and the functions under consideration. Dependence on fixed parameters is suppressed unless it needs to be tracked. Unspecified dimensional constants are denoted by $O(1)$, while $C_H$ and the induction constant $C_\varepsilon$ are retained explicitly.

For nonnegative quantities, $X\lessapprox Y$ means $X\lesssim(\log(2+R))^{O(1)}Y$; $X\gtrapprox Y$ means $Y\lessapprox X$, and $X\approx Y$ means that both inequalities hold. These symbols suppress only logarithmic factors, not powers of $K$ or $R$. The symbol $\sim$ continues to denote comparability up to fixed constants. The implicit constants and logarithmic exponents are independent of the current radius and the induction constant. We use $K^{O(1)}$ in the broad argument and one-time reductions; repeated narrow losses are kept as $K^{O(\delta_1)+O(\eta)}$, with logarithmic factors absorbed into the comparison symbols.

For Theorem~\ref{thm:full}\textup{(i)}, let $n,m,p,k$ be as in that statement, and put
\begin{equation}\label{eq:r}
 r_k=\frac{2(k-1)}{k-3}.
\end{equation}
When $k=3$, this exponent is interpreted as $\infty$. Thus $r_k\ge q\ge p\ge2$, and reciprocals of $r_k$ have their limiting meaning at this endpoint.

We fix $0<\delta_3<1/10$ and choose the remaining small parameters so that
\[
 0<\delta_1\ll\delta_2\ll\eta\ll\min\{\varepsilon,\delta_3\}.
\]
Throughout, $K=R^{\delta_1}$. The Lorentz localization uses a factor $K^{\delta_1}$, the wave packet tubes in Section~\ref{subsec:local-packets} have radius $R^{1/2+\delta_2}$, and the radial frequency margins have width comparable to $R^{-\delta_3}$. Each parameter is chosen sufficiently small in terms of those to its right and the fixed dimensions.

We use lattice cubes with disjoint interiors; fixed dilations, rotations, and finitely many lattice translates are allowed. In the pigeonholing arguments, quantities are normalized by the relevant $L^2$ norm and negligible classes are discarded, leaving $\lessapprox1$ dyadic choices. Constants in $\Rap(R)$ may depend on the prescribed decay order and on $\delta_1$.

\subsection{Acknowledgments} The author would like to express sincere gratitude to Prof. Xiaochun Li for suggesting this research topic. The author acknowledges the use of ChatGPT for editorial assistance, including language refinement, LaTeX typesetting, and manuscript organization, as well as for supplementary technical analysis concerning the refinement in Section \ref{sec:bilinear-refinement}, the degenerate cases discussed in Section~\ref{sec:geometric}, and the Knapp-type and hybrid lattice--Knapp counterexamples, including the construction in Section \ref{sec:examples}.

\section{Preliminaries}\label{sec:preliminaries}\label{section-preliminary}
\subsection{Local averaging and finite translations}
The space-time Fourier support of $Ef$ is contained in a fixed compact set, and
\begin{equation}\label{eq:Linfty}
 \norm{Ef}{L^\infty(\R^{n+1})}\lesssim\norm f2.
\end{equation}
Choose $\psi\in\mathcal S(\R^{n+1})$ with $\widehat\psi=1$ on a neighborhood of that set. Then $Ef=(Ef)*\psi$. H\"older's inequality and the Schwartz decay give, for a cube $B$ of side $K^2$ and $1\le s\le\infty$,
\begin{equation}\label{eq:localaverage}
 \sup_B|Ef|\lesssim_s\norm{Ef}{L^s(10B)}
       +\Rap(R)\norm f2.
\end{equation}
Here the decay order is chosen in terms of $\delta_1$. This is a local averaging statement, not pointwise comparability of a wave packet on its plank.

The following finite-translation estimate combines the two local norm conversions in \cite[(4.5)--(4.9)]{DLWZ}.
\begin{lemma}[Finite-translation product estimate]\label{lem:translations}
Let $3\le k\le n+1$, let $r_k$ be as in \eqref{eq:r}, and let $0<p\le r_k$ with $p<\infty$. There is a fixed lattice spacing $a_0>0$ such that, for a cube $B$ of side $K^2$ and functions $f_1,\ldots,f_k$, one can choose
\[
 v_j\in a_0\mathbb Z^{n+1},\qquad |v_j|\lesssim K^2,
 \qquad j=1,\ldots,k,
\]
with
\begin{equation}\label{eq:translation-product}
 \prod_{j=1}^k\norm{Ef_j}{L^{r_k}(B)}^{1/k}
 \lesssim K^{O(1)}
 \left\|\prod_{j=1}^k|Ef_j(\,\cdot+v_j)|^{1/k}\right\|_{L^p(B)}
 +\Rap(R)\prod_{j=1}^k\norm{f_j}2^{1/k}.
\end{equation}
There are $K^{O(1)}$ possible translation tuples.
\end{lemma}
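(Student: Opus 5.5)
The argument has two steps. First we pass from the $L^{r_k}$ norms on $B$ to pointwise values at well-chosen points, using \eqref{eq:localaverage}. Then we compare those pointwise values with an $L^p$ average of the translated product over $B$, using the same local constancy again.

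\textbf{Step 1: reduction to suprema.} Cover $10B$ by $O(1)$ lattice cubes of side $K^2$. Apply \eqref{eq:localaverage} with $s=r_k$ for each $j$. We obtain
\[
 \norm{Ef_j}{L^{r_k}(B)}\lesssim K^{O(1)}\sup_B|Ef_j|.
\]
This is the trivial bound $|B|^{1/r_k}\le K^{O(1)}$, and it is valid even when $r_k=\infty$. We then pick points $z_j\in B$ at which $|Ef_j(z_j)|\ge\frac12\sup_B|Ef_j|$.

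\textbf{Step 2: uncertainty principle at unit scale.} Since $\widehat{Ef_j}$ is supported in a fixed compact set, $Ef_j=Ef_j*\psi$, and $Ef_j$ is essentially constant at unit scale only in an averaged sense. To make this precise, we use Bernstein's inequality:
\[
 |\nabla Ef_j|\lesssim\norm{Ef_j}{L^\infty(5B)}+\Rap(R)\norm{f_j}2 .
\]
Note that this works with the sup over a slightly larger cube; we control that sup by $K^{O(1)}$ times the sup over $B$ via \eqref{eq:localaverage}, or else we simply run the argument with $\sup_{10B}$ from the start and choose $z_j\in10B$.

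Let $a_0$ be small relative to the Bernstein constant. Choose $v_j\in a_0\mathbb Z^{n+1}$ with $|v_j|\lesssim K^2$ such that $z_j-v_j$ lies near a fixed reference point $z_0\in B$, within distance $a_0$. Then for $w$ in the ball $B(z_0,c)$ we have
\[
 |Ef_j(w+v_j)|\ge\tfrac14\sup|Ef_j|-\Rap(R)\norm{f_j}2 .
\]
The ball $B(z_0,c)$ has volume $\gtrsim1$ and lies inside $B$. Taking the product of the $1/k$ powers and integrating the $p$-th power over this ball gives
\[
 \prod_j\bigl(\sup|Ef_j|\bigr)^{1/k}\lesssim\Bigl\|\prod_j|Ef_j(\cdot+v_j)|^{1/k}\Bigr\|_{L^p(B)}+\Rap(R)\prod_j\norm{f_j}2^{1/k}.
\]
This holds for any $0<p<\infty$; the restriction $p\le r_k$ is not needed here. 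Combined with Step 1, this yields \eqref{eq:translation-product} with loss $K^{O(1)}$.

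\textbf{The main obstacle} is the error bookkeeping. If $\sup|Ef_j|$ is itself $\lesssim\Rap(R)\norm{f_j}2$, the lower bound in Step 2 fails. We handle this case separately: the left side is then absorbed into the rapidly decaying term, using $\sup|Ef_i|\lesssim\norm{f_i}2$ for the other factors, which follows from \eqref{eq:Linfty}.

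\textbf{Counting translation tuples.} Each $v_j$ ranges over lattice points of $a_0\mathbb Z^{n+1}$ with $|v_j|\lesssim K^2$, so there are $K^{O(1)}$ choices per index. Since $k\le n+1$, there are $K^{O(1)}$ tuples in total, which gives the final claim.
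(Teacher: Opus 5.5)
Your Step 2 has a genuine gap. To get $|Ef_j(w+v_j)|\ge\frac14\sup_B|Ef_j|$ on a ball of fixed radius $c$, you need $|\nabla Ef_j|\lesssim\sup_B|Ef_j|$ near $z_j$. Local Bernstein only gives $|\nabla Ef_j|\lesssim\sup_{5B}|Ef_j|+\Rap(R)\norm{f_j}2$. The comparison you invoke, $\sup_{5B}|Ef_j|\lesssim K^{O(1)}\sup_B|Ef_j|$, is not available: \eqref{eq:localaverage} bounds the supremum on $B$ by a norm on the \emph{larger} cube $10B$, never the reverse.

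Uniformly in $f_j$, the ratio $\sup_{5B}|Ef_j|/\sup_B|Ef_j|$ is not bounded by any fixed power of $K$. Nothing prevents $Ef_j$ from having size $\sim\norm{f_j}2$ somewhere in $5B\setminus B$ while $\sup_B|Ef_j|\approx R^{-L/2}\norm{f_j}2$. That is still non-negligible if the error term is $R^{-L}\norm{f_j}2$. Your gradient argument then guarantees the lower bound only on a ball of radius $\sim\sup_B|Ef_j|/\sup_{5B}|Ef_j|$. Integrating over it costs a factor $(\sup_{5B}|Ef_j|/\sup_B|Ef_j|)^{(n+1)/p}$, which can be $R^{O(L)}$ rather than $K^{O(1)}$.

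Starting from $\sup_{10B}$ does not help. A near-maximizer $z_j$ may lie near $\partial(10B)$, and the gradient there is controlled only by the supremum over a still larger cube, so the regress never closes. The negligible case you single out as the main obstacle is not the real difficulty.

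The paper avoids pointwise lower bounds altogether. It uses the oversampled reconstruction formula $Ef_j(z)=a_0^{n+1}\sum_{v\in a_0\mathbb Z^{n+1}}Ef_j(x+v)\psi_0(z-x-v)$, valid for \emph{every} $x$. This gives $\sup_B|Ef_j|^s\lesssim\sum_{|v|\lesssim K^2}|Ef_j(x+v)|^s+\Rap(R)\norm{f_j}2^s$ uniformly in $x\in B$, using subadditivity when $s=p/k\le1$ and weighted H\"older otherwise. It then multiplies over $j$, integrates in $x\in B$, and keeps the largest of the $K^{O(1)}$ terms. The tail of $\psi_0$ is paid for with the global bound \eqref{eq:Linfty}, so suprema on different cubes are never compared.

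If you want to keep your structure, let $z_j$ maximize $|Ef_j(z)|(1+\dist(z,10B))^{-N}$ with $N\gg L/\delta_1$. Then Bernstein gives $|\nabla Ef_j|\lesssim_N|Ef_j(z_j)|$ on the unit ball about $z_j$, and $|Ef_j(z_j)|\ge\sup_B|Ef_j|$. In the non-negligible case, $\dist(z_j,10B)\le R^{L/N}\le K$, so you still get $|v_j|\lesssim K^2$.
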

\begin{proof}
Take $a_0$ small enough that the Fourier support of every $Ef_j$ is strictly inside the frequency cube $(-\pi/a_0,\pi/a_0)^{n+1}$. Choose a smooth cutoff supported in this cube and equal to one near these supports, and let $\psi_0$ be its inverse Fourier transform. Poisson summation gives the oversampled reconstruction formula
\[
 Ef_j(z)=a_0^{n+1}\sum_{v\in a_0\mathbb Z^{n+1}}
                 Ef_j(x+v)\psi_0(z-x-v).
\]
The series converges absolutely, since $Ef_j$ is bounded and $\psi_0$ is Schwartz. Consequently, for every $s>0$, uniformly in $x\in B$,
\[
 \sup_{z\in B}|Ef_j(z)|^s
 \lesssim_s\sum_{\substack{v\in a_0\mathbb Z^{n+1}\\|v|\lesssim K^2}}
 |Ef_j(x+v)|^s+\Rap(R)\norm{f_j}2^s.
\]
For $s\le1$, use subadditivity; for $s>1$, use weighted H\"older. The omitted lattice tail has arbitrary decay in $K$, hence in $R$ after choosing the Schwartz order.

Apply this with $s=p/k$, multiply over $j$, and integrate over $x\in B$. Using $\norm{Ef_j}{L^{r_k}(B)}\le|B|^{1/r_k}\sup_B|Ef_j|$, we bound the $p$-th power of the left side of \eqref{eq:translation-product} by $K^{O(1)}$ times
\[
 \sum_{v_1,\ldots,v_k}\int_B
             \prod_{j=1}^k|Ef_j(x+v_j)|^{p/k}\dd x
\]
and a negligible error. Selecting a largest term proves the lemma, including $r_k=\infty$.
\end{proof}

Multiplying $f_j(\xi)$ by $e^{i(v_j'\cdot\xi+v_{j,n+1}|\xi|)}$ produces $Ef_j(\cdot+v_j)$ and preserves the support and $L^2$ norm of $f_j$.

\subsection{Multilinear refined Strichartz}
For $\xi\ne0$, let
\[
 G(\xi)=2^{-1/2}(-\xi/|\xi|,1)
\]
be the unit normal to the cone. Functions $f_1,\ldots,f_k$ are $\nu$-transverse if
\[
 |G(\xi_1)\wedge\cdots\wedge G(\xi_k)|\ge\nu
 \quad\text{for all }\xi_j\in\supp f_j.
\]
The following is the cone multilinear refined Strichartz estimate from \cite[Theorem 4.2]{HarrisRS}, in a quantitatively transverse form.

\begin{theorem}\label{thm:multilinear}
Let $q=2(n+1)/(n-1)$ and $2\le k\le n+1$. Suppose that $f_1,\ldots,f_k$ are $\nu$-transverse and that $\mathcal Q$ is a family of $N$ lattice $R^{1/2}$-cubes in $B_R$. Assume that for each $j$, the quantities $\norm{Ef_j}{L^q(Q)}$ are dyadically constant as $Q$ varies over $\mathcal Q$. Then
\begin{equation*}
 \left\|\prod_{j=1}^k|Ef_j|^{1/k}\right\|_{L^q(\bigcup\mathcal Q)}
 \lesssim_\eta \nu^{-O(1)}R^\eta N^{-\frac{k-1}{k(n+1)}}
                  \prod_{j=1}^k\norm{f_j}2^{1/k}.
\end{equation*}
Here $\eta>0$ is arbitrary; fixed cone charts and fixed enlargements of the cubes are allowed.
\end{theorem}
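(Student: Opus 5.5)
The statement is \cite[Theorem 4.2]{HarrisRS}; the only new content is that the constant depends polynomially on $\nu^{-1}$, and that fixed cone charts and fixed enlargements of the cubes are allowed. My plan is to rerun Harris's argument, which follows the Du--Guth--Li--Zhang scheme \cite{DGLZ}, and check where transversality enters. I will first reduce to a normalized situation. Cover the annulus $1\le|\xi|\le2$ by finitely many fixed angular charts. Since $G$ is smooth and $f_1,\dots,f_k$ are $\nu$-transverse, one can split each $f_j$ into $O(\nu^{-O(1)})$ pieces on caps of angular size $\sim\nu$. Every resulting $k$-tuple is then $\nu/2$-transverse in a uniform sense: the normals are within $\nu/10$ of fixed directions $G_1,\dots,G_k$ with $|G_1\wedge\cdots\wedge G_k|\ge\nu/2$. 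The triangle inequality for the $k$-linear expression costs only $\nu^{-O(1)}$. Fixed enlargements of the cubes are handled by covering each enlarged cube with $O(1)$ lattice cubes and pigeonholing a translate class. This changes $N$ by at most a constant factor.

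The core of the argument is the two-scale induction of \cite{DGLZ}. Decompose each $f_j$ into scale-$R$ wave packets: planks of dimensions $R^{1/2}\times\cdots\times R^{1/2}\times R$ along null directions, carrying angular sectors of width $R^{-1/2}$. On each $R^{1/2}$-cube $Q$, the factor $Ef_j$ is a sum over the planks meeting $Q$. Apply the local multilinear restriction estimate at scale $R^{1/2}$, namely Bennett--Carbery--Tao in the form used for the cone (\cite[Theorem 4.2]{HarrisRS} invokes it). This bounds $\|\prod_j|Ef_j|^{1/k}\|_{L^q(Q)}$ by $\nu^{-O(1)}R^{\eta}$ times the geometric mean of the local $L^2$ masses $\|f_{j,Q}\|_2$, with $R^{1/2}$-normalization. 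The $\nu^{-O(1)}$ constant comes from the explicit dependence of Brascamp--Lieb/multilinear Kakeya constants on the volume form $|G_1\wedge\cdots\wedge G_k|$.

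Next I will sum over $Q\in\mathcal Q$ using the dyadic constancy hypothesis. The hypothesis lets one write $\|\prod_j |Ef_j|^{1/k}\|_{L^q(\bigcup\mathcal Q)}^q\le N\max_Q(\cdots)$. It then remains to show that the average local mass $\|f_{j,Q}\|_2^2$ is smaller than $\|f_j\|_2^2$ by the factor $N^{-(k-1)/k\cdot\frac{2}{n+1}}$, after the exponents are matched. To do this, pigeonhole the planks into classes by how many cubes of $\mathcal Q$ they meet, say $\sim\sigma$ cubes each. Then $\sum_Q\|f_{j,Q}\|_2^2\lesssim\sigma\|f_j\|_2^2$. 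Apply multilinear Kakeya once more at scale $R$, now to the planks through the $N$ cubes, to bound $\prod_j\sigma_j$ from above in terms of $N$; this again costs only $\nu^{-O(1)}R^\eta$. Combining this with the lower bound $N\cdot\prod_j\|f_{j,Q}\|_2^{2/k}$ for the total produces the factor $N^{-(k-1)/(k(n+1))}$.

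The main obstacle is this last incidence step. On the cone the planks are anisotropic: they have one long null direction, and Lorentz rescaling introduces the thin intermediate axes noted in the introduction. So the counting has to go through Harris's cone-adapted version rather than the isotropic-tube argument of \cite{DGLZ}. I would first try to quote Harris's incidence lemma verbatim and audit its proof only for $\nu$ dependence, verifying that every appeal to transversality enters through a determinant bounded below by $\nu$. It also enters through a covering of direction space whose cardinality is polynomial in $\nu^{-1}$. If both hold, the quantitative form follows with constant $\nu^{-O(1)}$.
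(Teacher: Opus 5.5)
Your top-level plan (quote \cite[Theorem 4.2]{HarrisRS} and check that transversality enters only polynomially) is the paper's own. Its entire justification is the remark that the $\nu^{-O(1)}$ comes from retaining the quantitative multilinear Kakeya constant in Harris's proof. But the mechanism you write down does not prove the statement.

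First, the preliminary splitting into $\nu$-caps cannot be done at the level of the theorem. The hypothesis that $\norm{Ef_j}{L^q(Q)}$ is dyadically constant on $\mathcal Q$ is not inherited by the pieces $f_{j,\alpha}$. Re-pigeonholing produces subfamilies with $N_c\ll N$, on which the available bound $N_c^{-\frac{k-1}{k(n+1)}}$ is worse. Without constancy the statement is false: pad $\mathcal Q$ with cubes where $Ef_j$ is negligible. The same objection applies to re-covering enlarged cubes by lattice cubes. Cap localization is harmless only inside the argument, at the Kakeya count, where every quantity is a nonnegative tube energy.

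Second, and more seriously, the core you describe does not reach the exponent. You bound each cube by the local masses $\norm{f_{j,Q}}2$, use $\sum_Q\norm{f_{j,Q}}2^2\lesssim\sigma_j\norm{f_j}2^2$, and get $\prod_j\sigma_j\lessapprox N$ from multilinear Kakeya. Even with the masses pigeonholed to be uniform, this yields only
\[
 \Bigl\|\prod_{j=1}^k|Ef_j|^{1/k}\Bigr\|_{L^q(\bigcup\mathcal Q)}
 \lessapprox N^{\frac1q}\prod_{j=1}^k\Bigl(\frac{\sigma_j}{N}\Bigr)^{\frac1{2k}}\norm{f_j}2^{1/k}
 \lessapprox N^{-\frac{2k-n-1}{2k(n+1)}}\prod_{j=1}^k\norm{f_j}2^{1/k}.
\]
This misses the target by $N^{\frac{n-1}{2k(n+1)}}$; for $n=k=3$ you get $N^{-1/12}$, not $N^{-1/6}$. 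Your reduction ``after the exponents are matched'' loses the factor $N^{1/q}$ coming from summing over the $N$ cubes. The per-cube bound is attained by a focusing configuration, so no cube-by-cube improvement is available.

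The missing ingredient is the linear refined Strichartz estimate for the cone. If the packets of $f_j$ each meet $\sim\sigma_j$ cubes, then $\norm{Ef_j}{L^q(\bigcup\mathcal Q)}\lessapprox(\sigma_j/N)^{1/(n+1)}\norm{f_j}2$. This replaces your $N^{-1/(n+1)}\sigma_j^{1/2}$ by $N^{-1/(n+1)}\sigma_j^{1/(n+1)}$. It is proved, as in \cite{DGLZ}, by decoupling on smaller cubes together with Lorentz rescaling and induction on scales. H\"older and $\prod_j\sigma_j\lessapprox\nu^{-O(1)}R^\eta N$ then give exactly $N^{-\frac{k-1}{k(n+1)}}$.

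That induction is linear and uses no transversality. So the only place $\nu$ enters is the multilinear Kakeya constant, which is the paper's point. Relatedly, the anisotropy you single out as the main obstacle belongs to the rescaling induction, not to the incidence step. At scale $R$ the packets are ordinary $R^{1/2}$-tubes, and multilinear Kakeya applies to them directly.
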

The polynomial transversality dependence follows by retaining the quantitative multilinear Kakeya constant in that proof. In our use, $\nu\ge K^{-O(1)}$, so this cost is $K^{O(1)}$ and occurs only in the broad estimate. The theorem does not require tangency to an algebraic variety.

\subsection{Narrow decoupling}
A $K^{-1}$-cap has angular width comparable to $K^{-1}$ and radial length comparable to one. The narrow condition below is existential on each cap; it does not require every normal in the cap to be $K^{-2}$-close to the plane.

\begin{theorem}[Narrow decoupling; \cite{GLMX}]\label{thm:decoupling}
Let $k\ge3$, $r_k=2(k-1)/(k-3)$, and $F=\sum_\tau F_\tau$, where each $\widehat F_\tau$ is supported in an $O(K^{-2})$-neighborhood of a $K^{-1}$-cap. Suppose there is a $(k-1)$-plane $V$ such that each cap contains a point with normal at distance $O(K^{-2})$ from $V$. For $2\le s\le r_k$, $s<\infty$,
\begin{equation*}
 \norm F{L^s(B)}\lesssim_\eta K^\eta
       \left(\sum_\tau\norm{F_\tau}{L^s(w_B)}^2\right)^{1/2},
\end{equation*}
where $B$ has side $K^2$, $\eta>0$, and the adapted weight satisfies
\[
 w_B(z)\lesssim_{N_0}
      \bigl(1+K^{-2}|z-c_B|\bigr)^{-N_0}.
\]
The decay order $N_0$ may be chosen sufficiently large in terms of the fixed parameters. When $k=3$, there are only $O(1)$ relevant caps, and the endpoint $s=\infty$ holds with $L^\infty(B)$ on both sides.
\end{theorem}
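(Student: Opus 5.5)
The statement immediately above is the narrow decoupling theorem of \cite[Theorem 7.1]{GLMX}. We do not reprove it; we only explain how to reduce our formulation to the cited one. First we normalize the geometry. Each $F_\tau$ has Fourier support in an $O(K^{-2})$-neighborhood of a $K^{-1}$-cap. By hypothesis, each cap contains a point whose normal $G(\xi_\tau)$ lies within $O(K^{-2})$ of the fixed $(k-1)$-plane $V$. Since $G$ is smooth with bounded derivatives on $1\le|\xi|\le2$, a change of the cap's base point costs only $O(K^{-1})$ in angle along the cap. The existential condition therefore means that the angular centers $\xi_\tau/|\xi_\tau|$ lie in an $O(K^{-2})$-neighborhood of the curve or submanifold of $S^{n-1}$ cut out by $V$. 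In this setting the cited theorem applies: the caps are essentially those of a $(k-2)$-dimensional subcone, enlarged by $O(K^{-2})$. After a fixed enlargement of the neighborhoods, the remaining normals in each cap may lie farther from $V$, but this does not affect the $K^{-2}$-neighborhood description of the frequency supports.

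Next we apply the decoupling itself. The cone over a $(k-2)$-dimensional section is a $(k-1)$-dimensional cone with $k-2$ nonvanishing principal curvatures. Its $\ell^2$ decoupling, from Bourgain--Demeter applied in the lower dimension and then extended by cylindrical invariance in the remaining directions, holds with critical exponent $2(k-1)/(k-3)=r_k$. For $2\le s\le r_k$ this gives the $K^\eta$ loss. The passage from a global decoupling to the localized form on a cube $B$ of side $K^2$ with weight $w_B$ is the standard localization. We multiply by a Schwartz bump adapted to $B$ whose Fourier transform is supported in the $K^{-2}$-ball. This preserves the frequency neighborhoods up to a constant enlargement, and the decay order $N_0$ comes from the bump. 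The number of caps is $K^{O(1)}$, which does not harm the $\ell^2$ form.

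For $k=3$, $V$ is a $2$-plane, and its intersection with the normal cone $\{2^{-1/2}(-\omega,1)\}$ consists of $O(1)$ directions. Hence only $O(1)$ caps are relevant. The endpoint $s=\infty$ then follows from the triangle inequality and Cauchy--Schwarz over $O(1)$ pieces, with $L^\infty(B)$ on both sides.

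The main obstacle is the reduction in the first step. We must check that an existential $O(K^{-2})$-closeness of one normal per cap is enough to place all caps in the $K^{-2}$-neighborhood of a single lower-dimensional cone, with uniform constants. We would handle this by an explicit parametrization of $V\cap\{\text{normals}\}$ and a transversality estimate. Degenerate planes with lightlike normal intersection need separate care. If necessary, such configurations can be covered by $O(1)$ subfamilies of the same type.
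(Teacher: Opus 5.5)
The paper gives no argument for this statement. It simply invokes \cite[Theorem 7.1]{GLMX}, stressing only that the narrow condition is existential on each cap, so your opening citation already matches what the paper does. The reduction you then sketch, however, rests on a false claim, and it is the very claim you single out as the main obstacle.

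Let $\Gamma_V\subset S^{n-1}$ be the set of directions whose normal lies in $V$. It is a $(k-3)$-dimensional sphere (two points if $k=3$), and the cone $\Gamma'$ over it is $(k-2)$-dimensional. A $K^{-1}$-cap has angular width $K^{-1}$ in all $n-1$ angular directions, including the $n-k+2\ge1$ directions transverse to $\Gamma_V$. So even a cap centered on $\Gamma_V$ contains points at distance $\sim K^{-1}$ from $\Gamma'$, and the existential hypothesis only places the center within $O(K^{-1})$ of $\Gamma_V$. Your step ``a change of base point costs only $O(K^{-1})$, therefore the centers lie in an $O(K^{-2})$-neighborhood'' does not follow. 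Neither the centers nor the supports lie in an $O(K^{-2})$-neighborhood of $\Gamma'$. The remaining normals, at distance up to $K^{-1}$ from $V$, do affect the description, so the verification you propose cannot succeed.

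The missing idea is that the cone, although $K^{-1}$ away from $\Gamma'$, is only $K^{-2}$ away from a cylinder over $\Gamma'$. First normalize $\Gamma_V$ to a great sphere $S^{n-1}\cap E$ with $\dim E=k-2$, according to its radius $\rho$:
\begin{enumerate}[(a)]
\item if $\rho\sim1$, use a Lorentz map of bounded distortion;
\item if $K^{-1}\lesssim\rho\ll1$, use a Lorentz rescaling as in Section~\ref{subsec:charts} to scale $\rho K$;
\item if $\rho\lesssim K^{-1}$, only $O(1)$ caps occur.
\end{enumerate}
This replaces your unspecified ``$O(1)$ subfamilies''. Writing $\xi=\xi_E+\xi_\perp$ with $|\xi_\perp|\lesssim K^{-1}$ on the relevant caps,
\[
 |\xi|=\sqrt{|\xi_E|^2+|\xi_\perp|^2}=|\xi_E|+O(K^{-2}).
\]
Hence the $O(K^{-2})$-neighborhood of each cap lies in the $O(K^{-2})$-neighborhood of $\theta\times B_{E^\perp}(0,CK^{-1})$, where $\theta$ is a $CK^{-1}$-cap of the light cone of $E\times\R\cong\R^{k-1}$. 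That cone has $k-3$ nonvanishing principal curvatures, which is what gives $r_k=2(k-1)/(k-3)$. Your count of a $(k-1)$-dimensional cone with $k-2$ curvatures would give only $2k/(k-2)$.

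Cylindrical decoupling must then absorb a genuinely $K^{-1}$-thick factor in $E^\perp$, not a $K^{-2}$-thickening. Each $\theta$ contains $O(1)$ original caps, which are recombined by Cauchy--Schwarz. It is cleanest to prove the global inequality first, since it is invariant under the normalization, and then localize with your bump. Your $k=3$ argument is fine.
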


We retain $w_B$ in the narrow argument. Its spatial tails are accounted for in Section~\ref{subsec:decoupling-tails}; no compact-support assumption on $w_B$ is used.

\subsection{Lorentz rescaling and spatial localization}\label{subsec:charts}
Fix a $K^{-1}$-angular cap $\tau$. Lorentz rescaling normalizes its angular size and contracts the physical directions by $K^{-2}$, $K^{-1}$, and $1$. We first give the exact formula, then localize at the smaller scale $R/K^2$.

After a fixed rotation, work where $\rho=|\xi|+\xi_1$ is bounded away from zero. Write $\xi=(\xi_1,\eta)$, $x=(x_1,y)$, and set
\[
 s=t-x_1,\qquad v=\frac{t+x_1}{2}.
\]
Since $\xi_1=\rho/2-|\eta|^2/(2\rho)$, the phase becomes
\[
 x\cdot\xi+t|\xi|
    =s\frac{|\eta|^2}{2\rho}+y\cdot\eta+v\rho.
\]
Absorbing the smooth Jacobian into $f$, we continue to write $E$ for the resulting operator:
\begin{equation}\label{eq:null-operator}
 Ef(s,y,v)=\iint e^{i(s|\eta|^2/(2\rho)+y\cdot\eta+v\rho)}
                          f(\eta,\rho)\dd\eta\dd\rho.
\end{equation}
Here $\rho$ lies in a fixed compact positive interval and $\eta/\rho$ in a fixed bounded set. A finite collection of these charts covers the truncated cone; the $L^2$ norm, weight condition, and estimates used below change only by fixed factors.

If $\tau$ is centered at $w_\tau$ in the variable $\eta/\rho$, put
\[
 \eta=\rho w_\tau+K^{-1}\zeta,\qquad
 g(\zeta,\rho)=K^{-(n-1)/2}
                f_\tau(\rho w_\tau+K^{-1}\zeta,\rho).
\]
Then $\norm g2=\norm{f_\tau}2$ and
\begin{equation*}
 Ef_\tau(z)=K^{-(n-1)/2}Eg(L_\tau z),
\end{equation*}
where
\begin{equation}\label{eq:L}
 L_\tau(s,y,v)=
 \left(\frac{s}{K^2},\frac{y+sw_\tau}{K},
       v+y\cdot w_\tau+\frac{s|w_\tau|^2}{2}\right).
\end{equation}
Thus $|\det L_\tau|=K^{-(n+1)}$, and for any measurable $U$,
\begin{equation}\label{eq:Lnorm}
 \norm{Ef_\tau}{L^{r_k}(U)}
   =K^{\frac{n+1}{r_k}-\frac{n-1}{2}}\norm{Eg}{L^{r_k}(L_\tau U)}.
\end{equation}
The variable $\rho$ is unchanged. In particular, $L_\tau(B_R)$ need not fit inside one $(R/K^2)$-ball: its spatial dimensions can be as large as $R/K$ and $R$.

To localize, partition the initial spatial variables of the rescaled function smoothly at length $RK^{-2+\delta_1}$, project back to a slightly enlarged cap, and pull back. The standard cone localization argument \cite[Proposition 3.1]{HarrisDecay} gives
\begin{equation}\label{eq:bessel-raw}
 f_\tau=\sum_D f_{\tau,D},\qquad
 \sum_{\tau,D}\norm{f_{\tau,D}}2^2\lesssim\norm f2^2.
\end{equation}
The corresponding extensions are localized to boxes $X_{\tau,D}$ with axes
\begin{equation*}
 R,\qquad (R/K)K^{\delta_1}\quad(n-1\text{ axes}),\qquad
 (R/K^2)K^{\delta_1}.
\end{equation*}
For fixed $\tau$ these boxes have bounded overlap, and on a fixed enlargement of $B_R$,
\begin{equation}\label{eq:packet-tail}
 \sum_{D:z\notin X_{\tau,D}}|Ef_{\tau,D}(z)|
       \lesssim_L R^{-L}\norm{f_\tau}2.
\end{equation}
Indeed, Plancherel gives \eqref{eq:bessel-raw}; bounded group velocity over the rescaled time interval of length $O(R/K^2)$ and integration by parts give \eqref{eq:packet-tail}. The padding $K^{\delta_1}$ allows arbitrary decay in $R$. 

We keep the radial support in a fixed positive interval during iteration. For example, allow
\[
 I_R=[a_*-c_*R^{-\delta_3},\ b_*+c_*R^{-\delta_3}],
 \qquad a_*>2c_*>0,\quad 0<\delta_3<1/10.
\]
The localization projection equals one on $I_R$ and is supported in $I_{R/K^2}$. The margin $(R/K^2)^{-\delta_3}-R^{-\delta_3}$ exceeds the inverse localization length; its cutoff derivatives are included in the integration-by-parts estimate. Fixed angular margins are chosen before rescaling. This avoids repeated enlargement of the radial annulus or angular chart.

\subsection{The weighted bilinear estimate and its normalization}\label{subsec:weighted-bilinear}
Let $n\ge3$, $0<m\le n-1$, and $2\le p\le3$. For a compactly supported nonnegative measure $\nu$, write
\[
 [\nu]_m=\sup_{z\in\R^{n+1},\,r>0}r^{-m}\nu(B(z,r)).
\]
Unlike \eqref{eq:weight}, this quantity uses \emph{all} radii. The following lemma is a unit-frequency consequence of \cite[Theorem 2.2]{CHL}.

\begin{lemma}[Local weighted bilinear estimate]\label{lem:CHL-unit}
Let $S\ge1$, and suppose that $\nu$ is supported in a ball of radius $S$ with $[\nu]_m\le A$. Let $g_1,g_2$ have angular supports separated by at least $\omega>0$. For every $\eta>0$,
\begin{equation}\label{eq:CHL-unit}
 \left\||Eg_1Eg_2|^{1/2}\right\|_{L^p(d\nu)}
 \lesssim_{n,m,p,\eta}\omega^{-O(1)}A^{1/p}
       S^{D_{n,m}(p)+\eta}\norm{g_1}2^{1/2}\norm{g_2}2^{1/2}.
\end{equation}
Fixed annuli, cone charts, and fixed enlargements of the physical ball are allowed. The constants and the exponent implicit in $\omega^{-O(1)}$ are independent of $S$, $g_1$, and $g_2$.
\end{lemma}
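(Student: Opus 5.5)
The plan is to obtain Lemma~\ref{lem:CHL-unit} from \cite[Theorem 2.2]{CHL} by a normalization argument and nothing more; no new analysis is needed. That theorem states the weighted bilinear cone restriction estimate in the following form. Take $\alpha$-dimensional measures $\nu$ supported in a ball of radius $S$, with $[\nu]_\alpha\le A$. Take extensions $Eg_1,Eg_2$ of functions supported on unit-separated angular sectors of the cone at frequency scale one. Then
\[
 \||Eg_1Eg_2|^{1/2}\|_{L^p(d\nu)}\lesssim A^{1/p}S^{\gamma+\eta}\|g_1\|_2^{1/2}\|g_2\|_2^{1/2},
\]
with an explicit exponent $\gamma=\gamma(\alpha,p,n)$. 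The first step is to record that exponent in the range $2\le p\le3$ and check that it equals $D_{n,m}(p)=\max\{0,m/p-(n-1+2m)/8\}$ when $\alpha=m$. I will confirm that $n\ge3$ and $0<m\le n-1$ lie within the hypotheses of \cite{CHL}. I will also confirm that the maximum with $0$ corresponds to the regime where the bilinear $L^2$-based bound saturates. If \cite{CHL} is phrased in terms of $L^p(H\,dz)$ for weights rather than measures, I will pass to measures by mollifying $\nu$ at scale $1$. This is harmless because $Eg_j$ has compact Fourier support, so $|Eg_1Eg_2|^{p/2}$ is essentially constant at unit scale: use \eqref{eq:localaverage}-type averaging, or majorize it by its convolution with a Schwartz bump. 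The mollified density satisfies $[\nu*\phi]_m\lesssim A$ at all radii.

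The second step handles the separation parameter $\omega$. I will cover the angular supports of $g_1$ and $g_2$ by $O(\omega^{-(n-1)})$ caps of size $\sim\omega$. Each pair of caps that meet the two supports is separated by $\gtrsim\omega$. For each such pair I will apply a Lorentz transformation. This is the same rescaling as \eqref{eq:L}, but with a fixed-size cap of aperture comparable to the separation, rather than $K^{-1}$. It maps the pair to a unit-separated pair on a fixed annulus. The map has singular values between $\omega^{O(1)}$ and $\omega^{-O(1)}$. Under it, an $m$-dimensional measure with constant $A$ pushes forward to one with constant at most $\omega^{-O(1)}A$, and the support radius grows at most to $\omega^{-O(1)}S$. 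The $L^2$ norms change by $\omega^{-O(1)}$ Jacobian factors. Summing over the cap pairs by the triangle inequality in $L^{p/2}$ (or its quasi-triangle variant) costs another $\omega^{-O(1)}$. This yields \eqref{eq:CHL-unit}; the factor $(\omega^{-O(1)}S)^{D+\eta}$ is absorbed into $\omega^{-O(1)}S^{D+\eta}$.

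The final step covers the allowed modifications. Fixed annuli and cone charts are reduced to the standard truncated cone by finitely many fixed rescalings and changes of variables, such as the null chart \eqref{eq:null-operator}. These change $A$, $S$, and norms by fixed factors. A fixed enlargement of the ball only changes $S$ by a constant.

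The main obstacle is the first step: matching the precise exponent and hypotheses of \cite[Theorem 2.2]{CHL} to $D_{n,m}(p)$. That includes the endpoints $p=2$ and $p=3$, and confirming that the constant depends on $[\nu]_m$ only through $A^{1/p}$. The rescaling step is routine bookkeeping. The one point to watch there is that the Frostman constant at all radii, not only radii $\ge1$, is controlled after the linear change of variables, which holds because singular values bounded by $\omega^{-O(1)}$ distort balls only polynomially.
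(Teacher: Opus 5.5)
Your overall plan is the paper's: deduce the fixed-separation case from \cite[Theorem 2.2]{CHL}, then subdivide into $c\omega$-caps, Lorentz-rescale each pair (with $K\sim\lambda^{-1}$ for separation $\lambda\ge\omega$) to fixed separation, bound the $m$-dimensional constant of the pushed-forward measure polynomially, and sum the $\omega^{-O(1)}$ pairs by the triangle inequality in $L^{p/2}$. That second step is fine as you describe it. The paper notes that the physical map has norm $O(1)$, so the measure even stays in an $O(S)$-ball, but your $\omega^{-O(1)}S$ also suffices.

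The gap is the first step. You leave it as something to ``record and check'', but it carries the whole content of the lemma, and you have guessed the wrong form for the cited theorem. In the normalization the paper uses, Cho--Ham--Lee is not an estimate for unit-frequency extensions against a measure in an $S$-ball. It concerns Fourier transforms $\widehat F_1,\widehat F_2$ of $L^2$ functions supported in the \emph{unit neighborhood} of the cone at frequency $S$, integrated against a measure $\mu$ in the \emph{unit} ball. The power is $S^{b+\eta}$ with $b=\max\{n/2-m/p,(3n+1-2m)/8\}$. This $b$ is not $D_{n,m}(p)$, so your step as written would not close; an explicit transfer is required.

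The missing device is the following. Choose $\varphi\in C_c^\infty((-c,c))$ with $\Phi(T)=\int e^{iTs}\varphi(s)\,ds$ bounded away from zero on $[-1,1]$. Set $F_j(\xi,\tau)=S^{-n/2}g_j(\xi/S)\varphi(\tau-|\xi|)$. Then $\norm{F_j}2\sim\norm{g_j}2$ and $\widehat F_j(X,T)=S^{n/2}\Phi(T)Eg_j(SX,ST)$. After translating the ball to the origin (a unimodular modulation of $g_j$), put $\mu(\cdot)=S^{-m}\nu(S\,\cdot)$. Then $[\mu]_m\le A$ and $\int G\,d\nu=S^m\int G(S\,\cdot)\,d\mu$. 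This yields the power $S^{b-n/2+m/p+\eta}$, and $b-n/2+m/p=\max\{0,m/p-(n-1+2m)/8\}=D_{n,m}(p)$. In particular, the $0$ branch of $D_{n,m}(p)$ is just the image of the branch $n/2-m/p$ of $b$.

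Bounded $S$ must then be handled separately, via $\norm{Eg_j}{L^\infty}\lesssim\norm{g_j}2$ and $\nu(B_S)\lesssim AS^m$. Your mollification contingency is unnecessary, since the theorem is applied directly to measures with $[\mu]_m$ taken over all radii.
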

\begin{proof}
First suppose the angular separation is bounded below by a fixed positive constant, and translate the physical ball to the origin. In the normalization of \cite[Theorem 2.2]{CHL}, the square root of the product of two Fourier transforms, with frequency support in the unit neighborhood of the cone at frequency $S$, satisfies
\begin{equation}\label{eq:CHL-high}
 \begin{gathered}
 \left\||\widehat F_1\widehat F_2|^{1/2}\right\|_{L^p(d\mu)}
 \lesssim_\eta [\mu]_m^{1/p}S^{b+\eta}
           \norm{F_1}2^{1/2}\norm{F_2}2^{1/2},\\
 b=\max\left\{\frac n2-\frac mp,\frac{3n+1-2m}{8}\right\},
 \end{gathered}
\end{equation}
for $\mu$ supported in the unit ball. We spell out the passage from thickened cone support to the extension operator.

Choose $\varphi\in C_c^\infty((-c,c))$ so that
$\Phi(t)=\int e^{its}\varphi(s)\,ds$ is bounded away from zero on $[-1,1]$. For $j=1,2$, set
\[
 F_j(\xi,\tau)=S^{-n/2}g_j(\xi/S)\varphi(\tau-|\xi|).
\]
Then $F_j$ is supported in a fixed-width neighborhood of the frequency-$S$ cone, $\norm{F_j}2\sim\norm{g_j}2$, and its Fourier integral with the positive sign is
\[
 \int e^{i(X\cdot\xi+T\tau)}F_j(\xi,\tau)\,d\xi\,d\tau
        =S^{n/2}\Phi(T)Eg_j(SX,ST).
\]
The choice of Fourier sign in \eqref{eq:CHL-high} is immaterial after reflecting the measure. Define
$\mu(A)=S^{-m}\nu(SA)$. Thus $[\mu]_m\le A$, and \eqref{eq:CHL-high} gives
\[
 \left\||Eg_1Eg_2|^{1/2}\right\|_{L^p(d\nu)}
 \lesssim_\eta A^{1/p}S^{b-n/2+m/p+\eta}
              \norm{g_1}2^{1/2}\norm{g_2}2^{1/2}.
\]
Since
\[
 b-\frac n2+\frac mp
 =\max\left\{0,\frac mp-\frac{n-1+2m}{8}\right\}=D_{n,m}(p),
\]
this proves the assertion for fixed separation. Bounded values of $S$ follow directly from the $L^\infty$ estimate and $\nu(B_S)\lesssim AS^m$.

For separation $\omega$, subdivide the two supports into caps of width $c\omega$. There are $\omega^{-O(1)}$ pairs, so triangle inequalities cost only a polynomial factor. For a given pair whose separation is $\lambda\ge\omega$, use a common cap of width $O(\lambda)$ and apply the Lorentz normalization \eqref{eq:L}, with $K\sim\lambda^{-1}$. The resulting supports have fixed separation. The physical map has norm $O(1)$, inverse norm $O(\lambda^{-2})$, and the $L^2$ norms transform by explicit powers of $\lambda$. The pushforward of $\nu$ is supported in an $O(S)$-ball and has $m$-dimensional constant at most $\lambda^{-O(1)}A$. Apply the fixed-separation estimate in that ball and undo the transformation. All additional factors are $\omega^{-O(1)}$. Pairs with fixed positive separation require no rescaling. This proves \eqref{eq:CHL-unit}.
\end{proof}

\subsection{Local packet energies and their two summation bounds}\label{subsec:local-packets}
Let $n\ge3$, and let $\mathcal Q$ be the lattice $R^{1/2}$-cubes meeting $B_R$. Suppose that $f_1,f_2,f_3$ have $\nu$-transverse cone normals, with $\nu\ge K^{-O(1)}$. The angular subdivisions in the broad selection are fixed before choosing $\delta_1$ sufficiently small.

Apply the standard scale-$R$ wave packet decomposition, using frequency cubes of side $R^{-1/2}$ and tubes of length $O(R)$ and radius $O(R^{1/2+\delta_2})$; see \cite[Section 2]{CHL} and \cite{HarrisRS}. To specify the coefficients, let $\{\psi_\tau\}$ and $\{\chi_v\}$ be smooth partitions of unity with bounded overlap, adapted respectively to frequency cubes $\tau$ of side $R^{-1/2}$ and spatial cubes of side $R^{1/2}$ centered at $v\in R^{1/2}\mathbb Z^n$. Choose $\widetilde\psi_\tau$ equal to one on $\supp\psi_\tau$ and supported in a fixed dilation of $\tau$. For the associated tube $T=T_{\tau,v}$, set
\[
 f_{i,T}=\widetilde\psi_\tau\,\mathcal F
       \bigl(\chi_v\,\mathcal F^{-1}(\psi_\tau f_i)\bigr),\qquad
 c_{i,T}:=\bigl\|\chi_v\,\mathcal F^{-1}(\psi_\tau f_i)
                         \bigr\|_{L^2(\R^n)}^2,
\]
where $\mathcal F$ is the unitary Fourier transform on $\R^n$. Thus $c_{i,T}$ is the squared $L^2$ norm of the spatially localized function before the final frequency cutoff. Plancherel and bounded overlap give
\begin{equation*}
 \sum_Tc_{i,T}\lesssim\norm{f_i}2^2,\qquad
 \left\|\sum_{T\in\mathcal T'}f_{i,T}\right\|_2^2
       \lesssim\sum_{T\in\mathcal T'}c_{i,T}
\end{equation*}
for every subcollection $\mathcal T'$. For the second inequality, first use bounded overlap of the supports of $\widetilde\psi_\tau$, then apply Plancherel and bounded overlap of the spatial cutoffs within each $\tau$. Each $Ef_{i,T}$ decays rapidly away from $T$. Packets with distant initial centers may be discarded with total error $\Rap(R)\norm{f_i}2$ on $B_R$; this follows from Cauchy--Schwarz and the rapidly decaying localization kernels. The frequency supports enlarge by $O(R^{-1/2})$, so the three-fold transversality is preserved when $\delta_1$ is sufficiently small.

Let $\mathcal T_i(Q)$ be the packets whose tubes meet a fixed enlargement of $Q$, and set
\begin{equation*}
 f_{i,Q}=\sum_{T\in\mathcal T_i(Q)}f_{i,T},\qquad
 \mathcal E_i(Q)=\sum_{T\in\mathcal T_i(Q)}c_{i,T},\qquad
 G_Q=\prod_{i=1}^3\mathcal E_i(Q)^{1/6}.
\end{equation*}
Then on $Q$,
\begin{equation}\label{eq:local-energy-approx}
 Ef_i=Ef_{i,Q}+\Rap(R)\norm{f_i}2,\qquad
 \norm{f_{i,Q}}2^2\lesssim\mathcal E_i(Q).
\end{equation}

\begin{lemma}[Two bounds for the local energies]\label{lem:energy-summation}
For every $\eta>0$, choosing $\delta_2$ sufficiently small gives
\begin{align}
 \|G_Q\|_{\ell^2(\mathcal Q)}
   &\lesssim R^{1/4+O(\delta_2)}\prod_{i=1}^3\norm{f_i}2^{1/3},
       \label{eq:energy-l2}\\
 \|G_Q\|_{\ell^3(\mathcal Q)}
   &\lesssim_\eta\nu^{-O(1)}R^{\eta+O(\delta_2)}
                    \prod_{i=1}^3\norm{f_i}2^{1/3}.
       \label{eq:energy-l3}
\end{align}
Consequently, for $2\le p\le3$,
\begin{equation}\label{eq:energy-lp}
 \|G_Q\|_{\ell^p(\mathcal Q)}
 \lesssim_\eta\nu^{-O(1)}
 R^{3/(2p)-1/2+\eta+O(\delta_2)}\prod_{i=1}^3\norm{f_i}2^{1/3}.
\end{equation}
\end{lemma}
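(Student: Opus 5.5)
We prove the two endpoint bounds separately and obtain \eqref{eq:energy-lp} by log-convexity of $\ell^p$ norms. For $1/p=\theta/2+(1-\theta)/3$, Hölder gives $\|G\|_{\ell^p}\le\|G\|_{\ell^2}^\theta\|G\|_{\ell^3}^{1-\theta}$. Here $\theta=6/p-2$, and the power of $R$ is $\theta/4=3/(2p)-1/2$. The $\eta$, $\nu^{-O(1)}$ and $O(\delta_2)$ losses combine trivially, so everything reduces to \eqref{eq:energy-l2} and \eqref{eq:energy-l3}.

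\emph{Tube traversal, \eqref{eq:energy-l2}.} By the arithmetic--geometric mean inequality, $G_Q^2=\prod_i\mathcal E_i(Q)^{1/3}$ is controlled in the normalized form. Put $\tilde{\mathcal E}_i=\mathcal E_i/\|f_i\|_2^2$. Then
\[
 G_Q^2\le\prod_i\|f_i\|_2^{2/3}\cdot\tfrac13\sum_i\tilde{\mathcal E}_i(Q).
\]
Sum over $Q$ and interchange the order of summation:
\[
 \sum_Q\mathcal E_i(Q)=\sum_T c_{i,T}\,\#\{Q:T\text{ meets the enlargement of }Q\}.
\]
A tube of length $O(R)$ and radius $R^{1/2+\delta_2}$ meets $\lesssim R^{1/2+O(\delta_2)}$ cubes of side $R^{1/2}$. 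Combined with $\sum_Tc_{i,T}\lesssim\|f_i\|_2^2$, this gives $\sum_QG_Q^2\lesssim R^{1/2+O(\delta_2)}\prod\|f_i\|_2^{2/3}$. Taking square roots yields \eqref{eq:energy-l2}.

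\emph{Trilinear Kakeya, \eqref{eq:energy-l3}.} Here
\[
 \sum_QG_Q^3=\sum_Q\prod_{i=1}^3\mathcal E_i(Q)^{1/2}
 =\sum_Q\prod_i\Bigl(\sum_{T_i\in\mathcal T_i}c_{i,T_i}1_{T_i\cap 2Q\neq\emptyset}\Bigr)^{1/2}.
\]
The plan is to compare this with an integral. Up to a factor $R^{-(n+1)/2}$, each indicator $1_{T\cap2Q\ne\emptyset}$ is dominated by the average over $Q$ of $1_{\widetilde T}$, where $\widetilde T$ is the $2R^{1/2+\delta_2}$-neighborhood of $T$. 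Thus
\[
 \sum_QG_Q^3\lesssim R^{-(n+1)/2}\int_{B_{2R}}\prod_{i=1}^3\Bigl(\sum_{T_i}c_{i,T_i}1_{\widetilde T_i}\Bigr)^{1/2}.
\]
This holds because $\mathcal E_i$ is essentially constant at scale $R^{1/2}$ after enlarging the tubes; the enlargement only costs $R^{O(\delta_2)}$.

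Now rescale by $R^{-1/2}$. The tubes become $R^{\delta_2}$-neighborhoods of line segments of length $R^{1/2}$. Their directions are cone normals, so for $i=1,2,3$ they are $\nu$-transverse, including after the $O(R^{-1/2})$ frequency enlargement. The multilinear (trilinear, $k=3$ in $\R^{n+1}$) Kakeya inequality of Bennett--Carbery--Tao / Guth, in its quantitative transversality form, bounds $\int\prod_{i=1}^3(\sum c_{i,T}1_{T})^{1/2}$ (exponent $1/(k-1)=1/2$ per factor after grouping) by
\[
 \nu^{-O(1)}R^{\eta}\,|T|\prod_i\Bigl(\sum c_{i,T}\Bigr)^{1/2}.
\]
Here $|T|\approx R^{(n+1)/2+O(\delta_2)}$ is measured in the original scale, using the $k$-linear Kakeya endpoint $L^{1/(k-1)}$ in the form: $\int\prod_{j=1}^k(\sum_T c_T1_T)^{1/(k-1)}\lesssim R^\eta\prod_j(\sum c_T|T|^{?})$.

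The main obstacle is getting the normalization right. Applied directly to tubes of length $R$, $k$-linear Kakeya with $k=3<n+1$ controls $\int\prod_{j\le3}(\sum_T c_T1_T)^{1/2}$ only up to the volume of the common thickened region. The correct bound is
\[
 \lesssim\nu^{-O(1)}R^\eta\,(R^{1/2})^{n+1}\,R^{?}\prod_j\Bigl(\sum_T c_{j,T}\Bigr)^{1/2}.
\]
I expect that the $3$-linear Kakeya estimate, after rescaling by $R^{1/2}$ (tubes of width $1$, length $R^{1/2}$), gives $\int\prod(\sum c1_T)^{1/2}\lesssim R^\eta\prod(\sum c)^{1/2}\cdot R^{1/2\cdot?}$. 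The powers must cancel exactly, to produce $\sum_QG_Q^3\lesssim R^{\eta+O(\delta_2)}\prod\|f_i\|_2$. I would verify this by checking the model case of three transverse bushes through a single $R^{1/2}$-cube, where $\sum_QG_Q^3$ is of order $1$ times $\prod\|f_i\|_2$ up to $R^{O(\delta_2)}$. This is consistent with the claimed scale-invariant $\ell^3$ bound. If the scaling does not close, the fallback is to apply multilinear Kakeya locally on $R$-balls with $R^{1/2}$-cubes as the unit scale, in its discrete ``cube-counting'' formulation as in Guth's endpoint proof. That formulation is natively the $\ell^{k/(k-1)}$-sum over cubes of products of tube counts, and with $k=3$ it is exactly $\ell^3$ of $G_Q$ after raising to the appropriate power.
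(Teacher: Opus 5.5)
\emph{Verdict: there is a genuine gap in the $\ell^3$ step.} The route is the paper's, but the exponent bookkeeping in the trilinear Kakeya input is never closed.

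\emph{What is correct.} Your interpolation step and your $\ell^2$ bound are correct and agree with the paper. AM--GM after normalizing is equivalent to the paper's H\"older step $\sum_QG_Q^2\le\prod_i(\sum_Q\mathcal E_i(Q))^{1/3}$, and the traversal count $R^{1/2+O(\delta_2)}$ is the same. Your route to \eqref{eq:energy-l3} is also the paper's:
\begin{enumerate}[(i)]
\item enlarge the tubes so that $Q\subset T^*$ whenever $T\in\mathcal T_i(Q)$;
\item dominate the cube sum by $\int\prod_iF_i^{1/2}$ with $F_i=\sum_Tc_{i,T}\mathbf 1_{T^*}$;
\item apply trilinear Kakeya.
\end{enumerate}
For step (ii) you need the \emph{pointwise} bound $F_i\ge\mathcal E_i(Q)$ on all of $Q$, which your enlargement does give. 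Dominating each indicator by an average over $Q$ would not suffice, because averaging does not pass through the product of square roots.

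\emph{The gap.} The Kakeya input is written with unresolved exponents ($|T|^{?}$, $R^{?}$). The one concrete normalization you state is misidentified. A tube of length $R$ and radius $R^{1/2+\delta_2}$ has volume $R^{1+n/2+O(\delta_2)}$, not $R^{(n+1)/2}$. A factor $|T|$ would leave an extra $R^{1/2}$, giving only $\|G_Q\|_{\ell^3}\lesssim R^{1/6+\cdots}$, and that destroys \eqref{eq:energy-lp}. The model-case check you propose only tests consistency; it does not prove the bound.

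\emph{What is missing.} You need the scale-invariant $k$-linear Kakeya inequality for $k<d$ (Bennett--Carbery--Tao; the paper uses the weighted form \cite[Theorem~4.4]{DGLZ}). For three $\nu$-transverse families of tubes of radius $1$ and length $L$ in $\R^{n+1}$,
\[
 \int\prod_{i=1}^3\Bigl(\sum_Tc_{i,T}\mathbf 1_T\Bigr)^{1/2}
 \lesssim_\eta\nu^{-O(1)}L^\eta\prod_{i=1}^3\Bigl(\sum_Tc_{i,T}\Bigr)^{1/2},
\]
with no further power of $L$, even though $3<n+1$. The reason is that two transverse unit tubes already meet in a set of volume $\nu^{-O(1)}$, whatever the ambient dimension.

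\emph{How it closes.} Rescale by $R^{-1/2-\delta_2}$. The tubes $T^*$ become unit tubes of length $O(R^{1/2})$, and the Jacobian is $R^{(n+1)/2+O(\delta_2)}$, so
\[
 \int_{B_{O(R)}}\prod_iF_i^{1/2}\lesssim_\eta\nu^{-O(1)}R^{\frac{n+1}2+\eta+O(\delta_2)}\prod_i\Bigl(\sum_Tc_{i,T}\Bigr)^{1/2}.
\]
This Jacobian is $|Q|=R^{(n+1)/2}$ up to $R^{O(\delta_2)}$. It therefore cancels against $\sum_QG_Q^3\le|Q|^{-1}\int\prod_iF_i^{1/2}$, which uses disjointness of the cubes. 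The result is $\sum_QG_Q^3\lesssim_\eta\nu^{-O(1)}R^{\eta+O(\delta_2)}\prod_i\norm{f_i}2$. The weights $c_{i,T}$ are handled by rational replication of tubes and a limit, as the paper notes.

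With this inserted, your argument is complete. Your ``discrete fallback'' is exactly this inequality read on unit cubes after rescaling.
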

\begin{proof}
A tube of length $O(R)$ and radius $O(R^{1/2+\delta_2})$ meets at most
$R^{1/2+O(\delta_2)}$ cubes of side $R^{1/2}$. Hence
\[
 \sum_Q\mathcal E_i(Q)\lesssim
 R^{1/2+O(\delta_2)}\sum_Tc_{i,T}
 \lesssim R^{1/2+O(\delta_2)}\norm{f_i}2^2.
\]
H\"older in $Q$ now gives
\[
 \sum_QG_Q^2
 \le\prod_{i=1}^3\left(\sum_Q\mathcal E_i(Q)\right)^{1/3},
\]
which proves \eqref{eq:energy-l2}. The factor $R^{1/2}$ records repeated visits of one packet to different parent cubes; it is not discarded as orthogonality.

For the second bound, enlarge the tubes by a fixed factor, writing them as $T^*$, so that $Q\subset T^*$ whenever $T\in\mathcal T_i(Q)$. Put
\[
 F_i(z)=\sum_Tc_{i,T}\mathbf1_{T^*}(z).
\]
Then $F_i(z)\ge\mathcal E_i(Q)$ for $z\in Q$. The weighted trilinear Kakeya inequality, in the form of \cite[Theorem 4.4]{DGLZ}, yields
\begin{align*}
 R^{(n+1)/2}\sum_Q\prod_{i=1}^3\mathcal E_i(Q)^{1/2}
 &\le\int_{B_{O(R)}}\prod_{i=1}^3F_i(z)^{1/2}\,dz\\
 &\lesssim_\eta\nu^{-O(1)}R^{(n+1)/2+\eta+O(\delta_2)}
                    \prod_{i=1}^3\left(\sum_Tc_{i,T}\right)^{1/2}.
\end{align*}
Nonnegative coefficients are permitted by rational replication of tubes and passage to a limit. Dividing by $R^{(n+1)/2}$ and taking cube roots proves \eqref{eq:energy-l3}, after relabeling $\eta$. Interpolate the $\ell^2$ and $\ell^3$ bounds. The interpolation parameter on $\ell^2$ is $6/p-2$, so its contribution to the $R$-exponent is $(6/p-2)/4=3/(2p)-1/2$. This proves \eqref{eq:energy-lp}.
\end{proof}

\section{The discrete induction and the reduction to weights}\label{sec:discrete}
\subsection{The discrete proposition}
Let $Y=\bigcup_{B\in\cB}B$ be a union of $M$ lattice cubes of side $K^2$ contained in $B_R$. Set
\begin{equation}\label{eq:gamma}
 \gamma(Y)=\sup_{\substack{z\in\R^{n+1}\\K^2\le\rho\le R^{1/2}}}
 \frac{\#\{B\in\cB:B\cap B(z,\rho)\ne\varnothing\}}{\rho^m}.
\end{equation}
Intersection is convenient when two grids do not align. Using containment after a fixed enlargement gives an equivalent parameter. A ball of radius $O(\sqrt R)$ is covered by a fixed number of $\sqrt R$-balls, so fixed endpoint enlargements in this definition cost only constants.

\begin{proposition}\label{prop:discrete}
Fix $n,m,p,k$ as in Theorem~\ref{thm:full}\textup{(i)}, and let $r_k$ be given by \eqref{eq:r}. Put
\[
 \beta=\max\{A_{n,m,k}(p),N_{n,m,k}(p),C_{n,m}(p)\}.
\]
For every $\varepsilon>0$, one can choose $\delta_1>0$ sufficiently small so that the following holds for every $R\ge1$, with $K=R^{\delta_1}$. Suppose that $Y$ consists of $M$ lattice $K^2$-cubes and that
\begin{equation*}
 \Lambda\le\norm{Ef}{L^{r_k}(B)}<2\Lambda
 \qquad(B\subset Y)
\end{equation*}
for some $\Lambda>0$. Then
\begin{equation}\label{eq:discrete}
 \norm{Ef}{L^{r_k}(Y)}
 \le C_\varepsilon R^{\beta+\varepsilon}
          \left(\frac{\gamma(Y)}M\right)^{\frac1p-\frac1{r_k}}\norm f2.
\end{equation}
The same assertion holds in the fixed cone charts of Section~\ref{subsec:charts} and for the fixed lattice variants described above.
\end{proposition}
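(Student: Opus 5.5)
We will prove Proposition~\ref{prop:discrete} by induction on the radius $R$, in the style of \cite{DLWZ}. The inductive hypothesis is the statement at all radii $\le R/2$, and in particular at radius $R/K^2$, for every admissible $f$, every $Y$ and every dyadic level $\Lambda$. The base case $R\lesssim 1$ follows from \eqref{eq:Linfty}, since $\gamma(Y)/M\gtrsim 1$ there. For the inductive step, fix $Y$ with $M$ cubes at level $\Lambda$. Decompose $f=\sum_\tau f_\tau$ into $K^{-1}$-caps. On each $K^2$-cube $B$ we run the broad--narrow dichotomy: $B$ is \emph{broad} if there are $k$ caps with $K^{-O(1)}$-transverse normals, each carrying a $K^{-O(1)}$ fraction of $\norm{Ef}{L^{r_k}(B)}$; otherwise $B$ is \emph{narrow}, and the significant caps then have normals within $O(K^{-2})$ of some $(k-1)$-plane. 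Pigeonholing shows that either the broad cubes or the narrow cubes make up half of $Y$, and we treat the two cases separately.

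\textbf{Broad case.} Apply Lemma~\ref{lem:translations} on each broad cube, at a cost of $K^{O(1)}$, to pass from the $L^{r_k}$ norm to a $k$-linear $L^p$ product with translated $f_j$. Pigeonhole the transverse tuple of caps and the translation vectors, losing $K^{O(1)}$. Next, dyadically pigeonhole the $R^{1/2}$-cubes $Q$ by the number of broad $K^2$-cubes they contain and by the values $\norm{Ef_j}{L^q(Q)}$. This produces $N$ cubes $Q$, each containing $\sim M/N$ cubes of $Y$, with $M/N\lesssim\gamma(Y)R^{m/2}$. On $\bigcup Q\cap Y$ we use H\"older from $L^p$ to $L^q$. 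The $R^{1/2}$ scale enters through the volume factor $(M K^{2(n+1)})^{1/p-1/q}$ combined with Theorem~\ref{thm:multilinear}, which supplies the gain $N^{-(k-1)/(k(n+1))}$. Converting back to the $\ell^{r_k}$ normalization via $\Lambda$-constancy gives a bound $R^{\varepsilon/2}M^{?}N^{?}\norm f2$. We then optimize over $N$ subject to $1\le N$ and $N\gtrsim M/(\gamma R^{m/2})$. The two extremes of this range produce exactly the exponents $A_{n,m,k}(p)$ and $C_{n,m}(p)$, with the factor $(\gamma/M)^{1/p-1/r_k}$ appearing on the right. Because $K=R^{\delta_1}$, the $K^{O(1)}$ losses are absorbed into $R^{\varepsilon}$ with room to spare.

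\textbf{Narrow case.} On each narrow cube apply Theorem~\ref{thm:decoupling} at exponent $r_k$, losing $K^{\eta}$. This bounds $\norm{Ef}{L^{r_k}(B)}$ by the $\ell^2$ sum over $\tau$ of $\norm{Ef_\tau}{L^{r_k}(w_B)}$; the weight tails are handled by the decay of $w_B$. Because every later pigeonholing acts on these nonnegative local norms, we never decouple a masked sum. Then localize $f_\tau=\sum_D f_{\tau,D}$ as in \eqref{eq:bessel-raw}, and use $L_\tau$ to rescale each $X_{\tau,D}$ into a ball of radius $R/K^2$ for $g_{\tau,D}$. Pigeonhole the pairs $(\tau,D)$ by $\norm{g}{2}$, by the number of new $K^2$-cubes in $L_\tau(Y\cap X_{\tau,D})$, and by their $L^{r_k}$ level, and then apply the inductive hypothesis at radius $R/K^2$, using \eqref{eq:Lnorm} for the change of norms. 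The crucial geometric input is a bound on the new density $\gamma'$ in terms of $\gamma(Y)$. A ball of radius $\rho'\le (R/K^2)^{1/2}$ in the new coordinates pulls back to a plank with axes $K^2\rho',K\rho',\rho'$. This plank is covered by about $K^{n}$ balls of radius $\rho'\le R^{1/2}$ in the old coordinates (or else its $\gamma$-count is controlled directly), which gives $\gamma'\lesssim K^{O(1)}\gamma\cdot(\text{overlap multiplicity})$. At the same time, one old $K^2$-cube can meet several thinner new cells, so the count $M'$ has to retain that multiplicity. Combining these with $\sum\norm{f_{\tau,D}}2^2\lesssim\norm f2^2$ and H\"older over $\tau$ (the number of caps is $\lesssim K^{k-2}$ by narrowness) closes the induction precisely when $\beta\ge N_{n,m,k}(p)$. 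The induction gain is $(K^2)^{-\beta}$, which beats the losses $K^{O(\delta_1)+O(\eta)}$ provided $\varepsilon\gg\eta$.

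\textbf{Main obstacle.} I expect the hardest step to be this density bookkeeping under Lorentz rescaling in the narrow case. One has to show that the product $(\gamma'/M')^{1/p-1/r_k}$ for the rescaled pieces, summed over $(\tau,D)$, is controlled by $K^{2(\ldots)}(\gamma/M)^{1/p-1/r_k}$ with exactly the exponent encoded in $N_{n,m,k}$. This is delicate because the half-scale condition only reaches radius $R^{1/2}$, while the rescaled planks are elongated well beyond that. My first attempt will be to cover each pulled-back $\rho'$-ball by $R^{1/2}$-scale pieces along its long axes. In doing so I will track both the count $\#\{\text{old cubes per new cell}\}\lesssim K^{O(1)}$ and its inverse, choosing whichever of the two regimes of $\rho'$ gives the weaker constraint.
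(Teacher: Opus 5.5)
Your broad case is essentially the paper's. Applying Lemma~\ref{lem:translations} at the target $p$ and then H\"older from $L^p$ to $L^q$ on $|Y'|\lesssim MK^{2(n+1)}$ gives the same factor $M^{1/r_k-1/q}N^{-(k-1)/(k(n+1))}$ as the paper's choice $p=q$ in the lemma. Optimizing over $S=M/\gamma$, where you also need $M\lesssim\gamma R^{(n+1+m)/2}$ and $\gamma\gtrsim K^{-2m}$, then gives $A_{n,m,k}$ and $C_{n,m}$.

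The narrow case has a genuine gap, exactly at the step you flag as the main obstacle. First, the covering is wrong. Covering the pulled-back plank, with axes $K^2\rho$, $K\rho$ ($n-1$ times) and $\rho$, by $\rho$-balls takes $K^2\cdot K^{n-1}=K^{n+1}$ balls, not $K^n$. That factor would force $\beta\ge\frac{n+1}{2p}-\frac{n-1}{4}$, which exceeds $N_{n,m,k}(p)$ by $\frac{n-m}{2}\bigl(\frac1{k-1}-\frac12+\frac1p\bigr)>0$. For instance, at $(n,m,k,p)=(3,2,3,3)$ it demands $\beta\ge 1/6$, while the proposition has $\beta=1/12$. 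Your fallback of covering by $R^{1/2}$-pieces is worse still at small $\rho$, since it gives $\gamma R^{m/2}$ independently of $\rho$. The right choice is $O(K)$ balls of radius $K\rho$, the intermediate axis. This is admissible precisely because $\rho\le\sqrt R/K$ forces $K\rho\le\sqrt R$, and it yields $\lesssim\gamma K^{m+1}\rho^m$ old cubes. That is the source of $m+1$ in $N_{n,m,k}$; a bound ``$K^{O(1)}\gamma$'' is not enough, since the exact power is what is tested.

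Second, and more fundamentally, even the correct bound $\gamma'\lesssim K^{m+1}\gamma$ does not control $(\gamma'/M')^{1/p-1/r_k}$. The number $M'$ of new cells carrying a piece can be far smaller than $M$: one cell can meet about $K^{n+1}$ old cubes, and a given source piece may be significant on only part of $Y$. H\"older over caps does not repair this, and the count $\#\tau\lesssim K^{k-2}$ plays no role. Combined with equal-norm orthogonality, H\"older only yields the trivial bound $\mu\lesssim J_K\#\mathcal I$ on the source--cell multiplicity, which is half of what is needed. The missing idea is the trade-off between occupancy and density, which runs as follows.
\begin{itemize}
\item For each source, fix the dyadic cell norm \emph{and} the occupancy $\eta_1$, the number of translated old cubes meeting a cell, before computing the new density. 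The covering then gives $\eta_1\gamma_1\lesssim J_KK^{m+1}\gamma$.
\item Pigeonhole the pair multiplicity $\mu$ on old cubes, and double count triples $(B,i,S)$ to get $M\mu\lessapprox\#\mathcal I\,M_1\eta_1$.
\item Together with the trivial bound, this gives $\mu/\#\mathcal I\lessapprox\min\{1,K^{m+1}\gamma M_1/(M\gamma_1)\}$.
\item The factor $(\mu/\#\mathcal I)^{1/2-1/r_k}$ comes from passing $\ell^2\to\ell^{r_k}$ and from orthogonality. Since the ratio is $\lesssim1$ and $\frac1p-\frac1{r_k}\le\frac12-\frac1{r_k}$, you may lower its exponent to $\frac1p-\frac1{r_k}$.
\item Multiplying by the induction factor $(\gamma_1/M_1)^{1/p-1/r_k}$ then gives $(K^{m+1}\gamma/M)^{1/p-1/r_k}$. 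This makes the total $K$-exponent $2N_{n,m,k}(p)$.
\end{itemize}
Without this bookkeeping your sketch does not close the induction at $\beta=\max\{A,N,C\}$.

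Two smaller points each cost only $K^{O(\delta_1)}$. The image $L_\tau X_{\tau,D}$ has diameter $RK^{-2+\delta_1}$, so it must be split into windows of radius $R/K^2$. The new lattice cubes have side $K^{2-4\delta_1}$, not $K^2$.
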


We prove this proposition by induction on $R$ in Sections~\ref{sec:broad} and \ref{sec:narrow}. Before doing so, we give the entire reduction from the proposition to the weighted assertion.

If $Y\ne\varnothing$, the density definition and a cover of $B_R$ by $\sqrt R$-balls give
\begin{equation}\label{eq:elementary-density}
 \gamma(Y)\gtrsim K^{-2m},\qquad M\lesssim\gamma(Y)R^{(n+1+m)/2}.
\end{equation}
The small-amplitude case is immediate from these bounds, so we assume
\begin{equation*}
 R^{-L}\norm f2\le\Lambda\lesssim K^{2(n+1)/r_k}\norm f2
\end{equation*}
for a sufficiently large fixed $L$.

\subsection{Reduction to a dyadic weight class}\label{subsec:weighted-reduction}
Assume Proposition~\ref{prop:discrete} with error exponent $\varepsilon/2$. Replace $H$ by $H\mathbf1_{B_R}$ and tile a fixed enlargement of $B_R$ by $K^2$-cubes. Put
\[
 h_B=\int_BH,\qquad U_B=\norm{Ef}{L^{r_k}(cB)},
\]
where $c$ is a fixed dilation sufficient for \eqref{eq:localaverage}. The ball condition gives
\begin{equation}\label{eq:totalmass}
 \int H\lesssim C_HR^{(n+1+m)/2},\qquad h_B\lesssim C_HK^{2m}.
\end{equation}
After discarding negligible classes, pigeonhole $h_B/C_H$ and $U_B/\norm f2$, and use a finite coloring so that the cubes $cB$ are disjoint. Selecting by contribution to $\sum_Bh_BU_B^p$, we obtain $M$ cubes with $h_B\sim h$, $U_B\sim U$, and, writing $Y^+=\bigcup cB$,
\begin{equation*}
 \norm{Ef}{L^p(B_R,H)}
 \lessapprox h^{1/p}M^{1/p-1/r_k}
                   \norm{Ef}{L^{r_k}(Y^+)}
       +C_H^{1/p}\Rap(R)\norm f2.
\end{equation*}
The density $\gamma$ of this selected family satisfies
\begin{equation*}
 h\gamma\lesssim C_H.
\end{equation*}
To see this, sum the masses of the disjoint original cubes whose dilates meet a test ball. They lie in a fixed enlargement of that ball. Fixed enlargements at the endpoint $\sqrt R$ are covered by finitely many $\sqrt R$-balls.

Applying Proposition~\ref{prop:discrete}, we obtain
\begin{align*}
 h^{1/p}M^{1/p-1/r_k}(\gamma/M)^{\frac1p-\frac1{r_k}}
   &=(h\gamma)^{\frac1p-\frac1{r_k}} h^{1/r_k}\\
   &\lesssim C_H^{1/p}K^{2m/r_k}.
\end{align*}
The last power of $K$ is paid only once. Taking $\delta_1$ smaller absorbs it, together with the logarithmic losses, into the remaining $R^{\varepsilon/2}$. This proves \eqref{eq:full}.

\subsection{The unweighted endpoint and interpolation}
The reduction above uses the full ball condition at radii $1\le r\le\sqrt R$, but not the pointwise bound $H\le1$. Replacing $\int_B H$ by $\mu(B)$ proves Corollary~\ref{cor:measure}. The unit-ball bound alone is sufficient for the endpoint argument below; it is not substituted for the full dimensional condition in the discrete reduction.

To remove the $R^\varepsilon$ loss at $p=q$, use the global unit-frequency wave Strichartz estimate
\begin{equation}\label{eq:unweighted-q}
 \norm{Ef}{L^{q}(\R^{n+1})}\lesssim_n\norm f2
\end{equation}
and local averaging; see, for instance, \cite[(1.1)]{HarrisRS}. The mass of $H\mathbf1_{B_R}$ in each unit ball is at most $C_H$. Hence convolution of this measure with a fixed rapidly decreasing kernel is bounded by $O(C_H)$, by summing over unit cubes. Applying Jensen's inequality to $Ef=(Ef)*\psi$ and integrating gives
\[
 \norm{Ef}{L^{q}(B_R,H)}\lesssim C_H^{1/q}\norm f2.
\]
Interpolation with \eqref{eq:Linfty} gives the stated zero exponent for $p\ge q$. Interpolation between two estimates with factors $C_H^{1/p_j}$ gives precisely $C_H^{1/p}$ at the interpolated exponent. This proves the assertion involving \eqref{eq:convex} once Proposition~\ref{prop:discrete} is established.

\section{The broad case and its dyadic selections}\label{sec:broad}\label{section-algorithm}
\subsection{Broad and narrow cubes}
If $\norm{Ef}{L^{r_k}(B)}\sim\Lambda$ on the $M$ cubes in Proposition~\ref{prop:discrete} and $M'$ of them are retained, then
\begin{equation}\label{eq:recover}
 \norm{Ef}{L^{r_k}(\bigcup B)}
 \lesssim(M/M')^{1/r_k}\norm{Ef}{L^{r_k}(\bigcup B')}.
\end{equation}
For $r_k=\infty$, the factor is one. We therefore select by cardinality on equal-norm families, and by integral contribution in the weighted reduction.

Choose a smooth, boundedly overlapping partition into $K^{-1}$-caps, and write $f=\sum_\tau f_\tau$. There are $O(K^{n-1})$ caps, and
\[
 \sum_\tau\norm{f_\tau}2^2\lesssim\norm f2^2.
\]
For an old cube $B$, define its significant family by
\begin{equation*}
 \mathcal T(B)=\{\tau:\norm{Ef_\tau}{L^{r_k}(B)}
                \ge K^{-O(1)}\norm{Ef}{L^{r_k}(B)}\},
\end{equation*}
where the exponent is a sufficiently large dimensional constant. The sum over caps outside $\mathcal T(B)$ has norm at most $\frac1{10}\norm{Ef}{L^{r_k}(B)}$.

Call $B$ narrow if there is a $(k-1)$-plane $V$ such that every $\tau\in\mathcal T(B)$ contains a cone normal at distance at most $cK^{-2}$ from $V$. Otherwise call it broad. At least half of the $M$ cubes belong to one of these cases. By \eqref{eq:recover}, it suffices to estimate that half.

On a broad cube one can choose $k$ significant caps and then subcaps of width $K^{-O(1)}$, with a sufficiently large dimensional exponent, such that
\begin{equation*}
 |G(\xi_1)\wedge\cdots\wedge G(\xi_k)|\ge K^{-O(1)}
\end{equation*}
throughout their supports, and the function on each selected subcap has local $L^{r_k}$ norm at least $K^{-O(1)}\norm{Ef}{L^{r_k}(B)}$.

Here is the selection procedure. Choose one significant cap, subdivide it, and retain a subcap with largest local norm. Having chosen $j<k$ representative directions, if every significant coarse cap had a normal within $cK^{-2}$ of their span, the cube would be narrow after extending that span to a $(k-1)$-plane. Thus some significant cap has all its normals farther away. Subdivide it and retain a norm-maximizing subcap. The new representative is $cK^{-2}$-separated from the previous span. Repeating gives a polynomial lower bound for the wedge. Choose the subdivision fine enough that the wedge bound persists throughout the selected subcaps. Each subdivision has only $K^{O(1)}$ terms, so its norm loss is polynomial in $K$.

The fine subdivision is confined to this broad argument. The narrow argument retains the original global coarse-cap functions $f_\tau$.

\subsection{Dyadic selection of transverse functions}
For each broad cube $B$, apply Lemma~\ref{lem:translations} with $p=q$ to the functions $h_{1,B},\ldots,h_{k,B}$ supported on the selected subcaps. This gives translations $|v_j(B)|\lesssim K^2$ such that
\begin{equation*}
 \Lambda\lesssim K^{O(1)}
 \left\|\prod_{j=1}^k
       |Eh_{j,B}(\,\cdot+v_j(B))|^{1/k}\right\|_{L^q(B)}.
\end{equation*}
Pigeonhole a common frequency and translation tuple, and absorb the translations into the phases of the functions. Write $f_1,\ldots,f_k$ for the resulting transverse functions. Their $L^2$ norms are bounded by $O(1)\norm f2$.

Next pigeonhole the local product norm. There is a family $\mathcal B_2$ with
\begin{equation*}
 \#\mathcal B_2\gtrapprox K^{-O(1)}M,
 \qquad
 \left\|\prod_{j=1}^k|Ef_j|^{1/k}\right\|_{L^q(B)}\sim A
 \quad(B\in\mathcal B_2).
\end{equation*}
For every nonempty $\mathcal B'\subseteq\mathcal B_2$, equal local norms give
\begin{equation}\label{eq:broad-recovery}
 \norm{Ef}{L^{r_k}(Y)}
 \lesssim K^{O(1)}M^{1/r_k}(\#\mathcal B')^{-1/q}
       \left\|\prod_{j=1}^k|Ef_j|^{1/k}\right\|_{L^q(\bigcup\mathcal B')}.
\end{equation}

Assign the small cubes to parent cubes $Q$ of side comparable to $R^{1/2}$, using a fixed enlargement and finite coloring at the boundaries. Pigeonhole
\begin{equation*}
 \lambda(Q)=\#\{B\in\mathcal B_2:B\text{ is assigned to }Q\},
 \qquad \iota_j(Q)=\norm{Ef_j}{L^q(Q)},\quad 1\le j\le k.
\end{equation*}
Retain a class carrying a significant number of \emph{small cubes}. If $\mathcal Q'$ is its parent family, $N=\#\mathcal Q'$, and $\mathcal B_3$ consists of its assigned small cubes, then
\begin{equation}\label{eq:broad-cardinalities}
 \#\mathcal B_3\sim\lambda N
       \gtrapprox K^{-O(1)}M.
\end{equation}
On $\mathcal Q'$, each $\norm{Ef_j}{L^q(Q)}$ is dyadically constant. We may therefore apply Theorem~\ref{thm:multilinear}.

\subsection{The exponent calculation}
Apply \eqref{eq:broad-recovery} with $\mathcal B'=\mathcal B_3$, enlarge the integration region to $\bigcup\mathcal Q'$, and use Theorem~\ref{thm:multilinear}. Equation~\eqref{eq:broad-cardinalities} yields
\begin{equation}\label{eq:broad-start}
 \norm{Ef}{L^{r_k}(Y)}
 \lessapprox K^{O(1)}R^{\varepsilon/8}
 M^{1/r_k-1/q}(M/\lambda)^{-\frac{k-1}{k(n+1)}}\norm f2.
\end{equation}
The two upper bounds
\begin{equation*}
 \lambda\lesssim\min\{M,\gamma R^{m/2}\}
\end{equation*}
are both needed. These follow from cardinality and the density condition on parent cubes, respectively.

Let $S=M/\gamma$. After factoring out the target $S^{-(\frac1p-\frac1{r_k})}$, the main factor on the right of \eqref{eq:broad-start} is bounded by
\begin{equation*}
 \gamma^{1/r_k-1/q}S^{1/p-1/q}
       \min\{1,R^{m/2}/S\}^{\frac{k-1}{k(n+1)}}.
\end{equation*}
Since $r_k\ge q$ and $\gamma\gtrsim K^{-2m}$, the first factor is at most $K^{O(1)}$. Also $S\lesssim R^{(n+1+m)/2}$ by \eqref{eq:elementary-density}. The remaining factor is
\[
 \begin{cases}
 S^{1/p-1/q},&S\le R^{m/2},\\[2pt]
 R^{m(k-1)/(2k(n+1))}S^{1/[k(n+1)]-1/2+1/p},
       &R^{m/2}\le S\lesssim R^{(n+1+m)/2}.
 \end{cases}
\]
Because $0\le\frac12-\frac1p\le\frac1{n+1}$, its maximum is attained at $S=R^{m/2}$ or at $S$ comparable to $R^{(n+1+m)/2}$. The corresponding exponents are $C_{n,m}(p)$ and $A_{n,m,k}(p)$. Thus
\begin{equation*}
 \norm{Ef}{L^{r_k}(Y)}
 \lessapprox K^{O(1)}R^{\max\{A_{n,m,k},C_{n,m}\}+\varepsilon/8}
                  (\gamma/M)^{\frac1p-\frac1{r_k}}\norm f2.
\end{equation*}
Choose $\delta_1$ small enough that the polynomial $K$ loss is at most $R^{\varepsilon/8}$, and then take the base radius large enough to absorb the logarithmic loss implicit in $\lessapprox$. This proves the required broad bound with room in the error exponent. No induction hypothesis was used in this case.

\section{The narrow case}\label{sec:narrow}\label{concludetheproof}
Assume that at least $M/2$ old cubes are narrow, and denote this family by $\mathcal B_{\rm nar}$. We use the Lorentz formulas in Section~\ref{subsec:charts} and induct at scale $R/K^2$. At this scale the lattice cubes have side
\begin{equation*}
 (R/K^2)^{2\delta_1}=K^{2-4\delta_1}.
\end{equation*}
All bounded radii are included in the induction constant.

\subsection{Decoupling and the spatial tails}\label{subsec:decoupling-tails}
Apply Theorem~\ref{thm:decoupling} to the significant cap sum on a narrow cube $B$, absorbing the insignificant sum. For finite $r_k$, decompose the adapted weight over lattice translates $B_\ell=B+K^2\ell$:
\[
 w_B\lesssim_{N_0}\sum_{\ell\in\mathbb Z^{n+1}}
          (1+|\ell|)^{-N_0}\mathbf1_{B_\ell}.
\]
Since $r_k\ge2$, the $\ell^{r_k}$ norm in $\ell$ is bounded by its $\ell^2$ norm. Thus
\begin{equation*}
 \norm{Ef}{L^{r_k}(B)}
 \lesssim_\eta K^\eta
 \left(\sum_{\ell\in\mathbb Z^{n+1}}
       (1+|\ell|)^{-2N_0/r_k}
       \sum_{\tau\in\mathcal T(B)}
                 \norm{Ef_\tau}{L^{r_k}(B_\ell)}^2\right)^{1/2}.
\end{equation*}
By \eqref{eq:Linfty}, the sum over $|\ell|>K^{\delta_1}$ is negligible once $N_0$ is chosen sufficiently large in terms of $L$ and $\delta_1$. Select a maximizing $\ell$ on each old cube and then a common $\ell_0$. There is a family $\mathcal B_{\rm loc}\subseteq\mathcal B_{\rm nar}$ with
\begin{equation*}
 \#\mathcal B_{\rm loc}\gtrsim K^{-O(\delta_1)}M
\end{equation*}
and, writing $B^\sharp=B+K^2\ell_0$, for every $B\in\mathcal B_{\rm loc}$,
\begin{equation}\label{eq:shifted-decoupling}
 \norm{Ef}{L^{r_k}(B)}
 \lesssim K^{\eta+O(\delta_1)}
       \left(\sum_{\tau\in\mathcal T(B)}
             \norm{Ef_\tau}{L^{r_k}(B^\sharp)}^2\right)^{1/2}
       +\Rap(R)\norm f2.
\end{equation}
For $r_k=\infty$ (necessarily $k=3$), the narrow cap family has $O(1)$ members, so \eqref{eq:shifted-decoupling} follows directly from the triangle inequality with $\ell_0=0$ and $\mathcal B_{\rm loc}=\mathcal B_{\rm nar}$.

The translate $K^2\ell_0$ is the same for all retained cubes and has size $O(K^{2+\delta_1})=o(R)$. Hence the family $\{B^\sharp:B\subset Y\}$ has the same density $\gamma$ as the original family and lies in a fixed enlargement of $B_R$. Narrowness and the equal-norm condition remain attached to the original cube $B$; no such condition is imposed on $B^\sharp$. This handles the weight tails without a separate localized decoupling lemma.

\subsection{Rescaled cells and source windows}
A new-scale lattice cube $\widetilde B$ of side $K^{2-4\delta_1}$ pulls back under $L_\tau$ to a cell $S=L_\tau^{-1}\widetilde B$ with axes
\begin{equation*}
 K^{4-4\delta_1},\qquad K^{3-4\delta_1}\quad(n-1\text{ axes}),\qquad K^{2-4\delta_1}.
\end{equation*}
For fixed $\tau$ these cells partition space, apart from their boundaries. Since
\begin{equation*}
 K^2/K^{2-4\delta_1}=K^{4\delta_1},
\end{equation*}
a fixed enlargement $cB^\sharp$ meets at most
\begin{equation}\label{eq:J}
 J_K\lesssim K^{4(n+1)\delta_1}
\end{equation}
cells. In particular, an old cube need not lie in a single cell.

Apply \eqref{eq:bessel-raw}--\eqref{eq:packet-tail}. Each rescaled source box has diameter $O(RK^{-2+\delta_1})$. Assign its cells to $K^{O(\delta_1)}$ windows, each lying in an $(R/K^2)$-ball. Write $i=(\tau,D,j)$ for a source box and a window, and set $f_i=f_{\tau,D}$. Each function is paired with at most $K^{O(\delta_1)}$ cell families. Hence
\begin{equation}\label{eq:bessel}
 \sum_i\norm{f_i}2^2\lesssim K^{O(\delta_1)}\norm f2^2.
\end{equation}
Let $\Sigma_i$ be the assigned cells which meet some $cB^\sharp$ with $B\in\mathcal B_{\rm loc}$. Their images lie in an $(R/K^2)$-ball.

For $S\in\Sigma_i$, define
\begin{equation}\label{eq:local-piece}
 a_{i,S}(B)=
 \norm{Ef_i}{L^{r_k}(cB^\sharp\cap S\cap X_{\tau(i),D(i)})}
       \mathbf1_{\{\tau(i)\in\mathcal T(B)\}}.
\end{equation}
Using \eqref{eq:shifted-decoupling}, bounded overlap of the source boxes, and the inequality $\|\cdot\|_{\ell^{r_k}}\le\|\cdot\|_{\ell^2}$ for the cell norms, we obtain
\begin{equation*}
 \norm{Ef}{L^{r_k}(B)}
 \lesssim K^{\eta+O(\delta_1)}
       \left(\sum_i\sum_{S\in\Sigma_i}a_{i,S}(B)^2\right)^{1/2}
       +\Rap(R)\norm f2.
\end{equation*}
This also holds for $r_k=\infty$, with suprema. All subsequent selections concern the nonnegative numbers in this quadratic sum.

\subsection{Dyadic selection}
Discard negligible source and cell norms. For each source $i$, first group its cells by the dyadic values
\begin{equation*}
 \norm{Ef_i}{L^{r_k}(S)}\sim\kappa,
 \qquad
 \#\{B\in\mathcal B_{\rm loc}:cB^\sharp\cap S\ne\varnothing\}
       \sim\eta_1.
\end{equation*}
Write $\Sigma_i(\kappa,\eta_1)$ for this family. Next sort the sources by the dyadic values of $M_i=\#\Sigma_i(\kappa,\eta_1)$, $\norm{f_i}2$, and
\begin{equation*}
 \gamma_i=\sup_{\substack{z'\in\R^{n+1}\\K^{2-4\delta_1}\le\rho\le\sqrt R/K}}
 \frac{\#\{S\in\Sigma_i(\kappa,\eta_1):
          L_{\tau(i)}S\cap B(z',\rho)\ne\varnothing\}}{\rho^m}.
\end{equation*}
The common label is
\begin{equation*}
 \zeta=(\kappa,\eta_1,M_1,\gamma_1,b).
\end{equation*}
In particular, the density is computed \emph{after} the cell norm and occupancy have been fixed.

Let $\mathcal I_\zeta$ be the corresponding source family and put
\[
 Z_\zeta(B)=\sum_{i\in\mathcal I_\zeta}
                 \sum_{S\in\Sigma_i(\kappa,\eta_1)}a_{i,S}(B)^2.
\]
Choose a maximizing label on each $B\in\mathcal B_{\rm loc}$, then retain a common label $\zeta_0$ on $\mathcal B_0$. Thus
\begin{equation*}
 \norm{Ef}{L^{r_k}(B)}
       \lessapprox K^{\eta+O(\delta_1)}
                    Z_{\zeta_0}(B)^{1/2},\qquad B\in\mathcal B_0,
\end{equation*}
and
\begin{equation*}
 \#\mathcal B_0\gtrapprox K^{-O(\delta_1)}M.
\end{equation*}
Fix $\zeta_0$, abbreviate the source family by $\mathcal I$, and write $\Sigma_i$ for its selected cells. We have
\begin{equation}\label{eq:uniform-source-data}
 \begin{gathered}
 \#\Sigma_i\sim M_1,\quad \gamma_i\sim\gamma_1,\quad
       \norm{f_i}2\sim b,\\
 \norm{Ef_i}{L^{r_k}(S)}\sim\kappa,\quad
 \#\{B\in\mathcal B_{\rm loc}:cB^\sharp\cap S\ne\varnothing\}
       \sim\eta_1\qquad(S\in\Sigma_i).
 \end{gathered}
\end{equation}

Finally pigeonhole the pair multiplicity
\begin{equation*}
 \mu(B)=\#\{(i,S):i\in\mathcal I,\ S\in\Sigma_i,
                         \ cB^\sharp\cap S\ne\varnothing\}.
\end{equation*}
There is a family $\mathcal B'\subseteq\mathcal B_0$ with
\begin{equation}\label{eq:N6-retention}
 M':=\#\mathcal B'\gtrapprox K^{-O(\delta_1)}M,
 \qquad \mu(B)\sim\mu\quad(B\in\mathcal B').
\end{equation}
The multiplicity counts source--cell pairs, not just source functions. Put $Y_i=\bigcup_{S\in\Sigma_i}S$. H\"older over these pairs, together with the bounded overlap of $cB^\sharp$, gives
\begin{equation*}
 \sum_{B\in\mathcal B'}a_{i,S}(B)^{r_k}
             \lesssim\norm{Ef_i}{L^{r_k}(S)}^{r_k}\qquad(r_k<\infty).
\end{equation*}
Using \eqref{eq:recover}, we conclude
\begin{equation}\label{eq:narrow-after-selection}
 \norm{Ef}{L^{r_k}(Y)}
 \lessapprox K^{\eta+O(\delta_1)}\mu^{\frac12-\frac1{r_k}}
           \left(\sum_{i\in\mathcal I}
                     \norm{Ef_i}{L^{r_k}(Y_i)}^{r_k}\right)^{1/r_k}.
\end{equation}
For $r_k=\infty$, replace the last sum by a maximum; the factor is again $\mu^{\frac12-\frac1{r_k}}$. The smaller-scale estimates apply to the functions $f_i$ themselves.

\subsection{The two incidence counts}
Count triples $(B,i,S)$ with $B\in\mathcal B'$, $i\in\mathcal I$, $S\in\Sigma_i$, and $cB^\sharp\cap S\ne\varnothing$. Using \eqref{eq:uniform-source-data} and \eqref{eq:N6-retention} gives
\begin{equation*}
 M\mu\lessapprox K^{O(\delta_1)}
                     \#\mathcal I\,M_1\eta_1.
\end{equation*}
Also, by \eqref{eq:J},
\begin{equation*}
 \mu\lesssim J_K\#\mathcal I.
\end{equation*}

For the second count, fix $i$ and a ball of radius
$\rho\in[K^{2-4\delta_1},\sqrt R/K]$ realizing its density within a fixed factor. Its inverse image meets $\gtrsim\gamma_1\rho^m$ selected cells. Each meets $\gtrsim\eta_1$ translated old cubes, so there are
\begin{equation*}
 \gtrsim\eta_1\gamma_1\rho^m
\end{equation*}
cell--cube incidences, with each translated old cube counted at most $J_K$ times.

The inverse image has axes $K^2\rho$, $K\rho$ ($n-1$ times), and $\rho$. Thickening by $O(K^2)$ to include the old cubes, it is covered by $O(K)$ balls of radius $O(K\rho)$, since
\begin{equation*}
 K^2\lesssim K^{3-4\delta_1}\le K\rho\le\sqrt R.
\end{equation*}
The translated old family has density at most $\gamma$, so the number of distinct old cubes is $\lesssim\gamma K^{m+1}\rho^m$. Hence
\begin{equation*}
 \eta_1\gamma_1\lesssim J_KK^{m+1}\gamma.
\end{equation*}
Fixed endpoint enlargements use finite covers by $\sqrt R$-balls; no density condition beyond $\sqrt R$ is used. Combining the two counts,
\begin{equation}\label{eq:muratio}
 \frac\mu{\#\mathcal I}
 \lessapprox K^{O(\delta_1)}
        \min\left\{1,\ K^{m+1}\frac{\gamma M_1}{M\gamma_1}\right\}.
\end{equation}
The factor $K^{m+1}$ comes from Lorentz scaling; the tail selection and grid mismatch contribute $K^{O(\delta_1)}$.

\subsection{The smaller-scale estimate and induction closure}
Let $g_i$ be the Lorentz rescaling of $f_i$. Then $\norm{g_i}2=\norm{f_i}2$, and
$L_{\tau(i)}Y_i$ consists of $\sim M_1$ new cubes in an $(R/K^2)$-ball, has density $\sim\gamma_1$, and satisfies the equal-local-norm hypothesis by \eqref{eq:uniform-source-data} and \eqref{eq:Lnorm}. The induction hypothesis therefore gives
\begin{equation*}
 \norm{Ef_i}{L^{r_k}(Y_i)}
   \lesssim C_\varepsilon K^{\frac{n+1}{r_k}-\frac{n-1}{2}}
        (\gamma_1/M_1)^{\frac1p-\frac1{r_k}}(R/K^2)^{\beta+\varepsilon}\norm{f_i}2.
\end{equation*}
The common $L^2$ norm and \eqref{eq:bessel} imply
\begin{equation*}
 \left(\sum_{i\in\mathcal I}\norm{f_i}2^{r_k}\right)^{1/r_k}
 \lesssim K^{O(\delta_1)}(\#\mathcal I)^{\frac1{r_k}-\frac12}\norm f2,
\end{equation*}
with the usual maximum when $r_k=\infty$. Substitution into \eqref{eq:narrow-after-selection} yields
\begin{equation*}
 \begin{split}
 \norm{Ef}{L^{r_k}(Y)}\lessapprox{}&
 C_\varepsilon K^{\frac{n+1}{r_k}-\frac{n-1}{2}+\eta+O(\delta_1)}
       \left(\frac\mu{\#\mathcal I}\right)^{\frac12-\frac1{r_k}}
       \left(\frac{\gamma_1}{M_1}\right)^{\frac1p-\frac1{r_k}}\\
 &\hspace{20mm}\times (R/K^2)^{\beta+\varepsilon}\norm f2.
 \end{split}
\end{equation*}
Since $0\le\frac1p-\frac1{r_k}\le\frac12-\frac1{r_k}$, \eqref{eq:muratio} gives
\begin{equation*}
 \left(\frac\mu{\#\mathcal I}\right)^{\frac12-\frac1{r_k}}
 \left(\frac{\gamma_1}{M_1}\right)^{\frac1p-\frac1{r_k}}
 \lessapprox K^{(m+1)(\frac1p-\frac1{r_k})+O(\delta_1)}(\gamma/M)^{\frac1p-\frac1{r_k}}.
\end{equation*}
After factoring out the right-hand side of \eqref{eq:discrete}, the remaining coefficient is bounded by
\begin{equation}\label{eq:close}
 (\log R)^{O(1)}
 K^{2N_{n,m,k}(p)-2\beta-2\varepsilon+\eta+O(\delta_1)},
\end{equation}
because
\[
 \begin{aligned}
 &\frac{n+1}{r_k}-\frac{n-1}{2}
       +(m+1)\left(\frac1p-\frac1{r_k}\right)\\
 &\qquad=\frac{n-m}{r_k}-\frac{n-1}{2}+\frac{m+1}{p}\\
 &\qquad=1-\frac{n-m}{k-1}-(m+1)\left(\frac12-\frac1p\right)
       =2N_{n,m,k}(p).
 \end{aligned}
\]
Choose $\eta\ll\varepsilon$, then $\delta_1\ll\varepsilon$ so that the broad losses and all $O(\delta_1)$ terms fit within the error budget. Since $\beta\ge N_{n,m,k}(p)$, the negative power of $K=R^{\delta_1}$ in \eqref{eq:close} absorbs the implicit logarithmic loss, and the coefficient is less than $1/2$ above a sufficiently large base radius. The bounded-radius case and negligible errors are absorbed into $C_\varepsilon$. This closes the induction, proving Proposition~\ref{prop:discrete} and Theorem~\ref{thm:full}\textup{(i)}.

\begin{remark}\label{rem:no-masked-decoupling}
Decoupling is applied before the spatial selections. The indicators of $cB^\sharp\cap S\cap X_{\tau,D}$ occur only inside the nonnegative local norms \eqref{eq:local-piece}; no Fourier-support assertion is made for a masked oscillatory sum.
\end{remark}

\section{A further estimate}\label{sec:bilinear-refinement}
We prove Theorem~\ref{thm:bilinear-refinement}\textup{(ii)} using the weighted bilinear estimate and the local packet energies from Sections~\ref{subsec:weighted-bilinear} and \ref{subsec:local-packets}. Fix $n\ge3$, $0<m\le n-1$, and $2\le p\le3$. The auxiliary norm in the discrete argument is $L^\infty$, corresponding to three-fold broadness. We retain the half-scale density \eqref{eq:gamma} and the Lorentz localization of Section~\ref{subsec:charts}. Let $\mathcal Q$ be the lattice $R^{1/2}$-cubes meeting $B_R$.

\subsection{A full-scale measure on each half-scale cube}
Let $Y$ be a union of lattice $K^2$-cubes in $B_R$, with density $\gamma$ as in \eqref{eq:gamma}. For $Q\in\mathcal Q$, define
\[
 d\nu_Q=\mathbf1_{Y\cap Q}\,dz.
\]
We claim that
\begin{equation*}
 [\nu_Q]_m\lesssim K^{2(n+1)}\gamma.
\end{equation*}
For $K^2\le r\le R^{1/2}$, this follows by counting the old cubes meeting the ball and multiplying by their volume $K^{2(n+1)}$. If $r<K^2$, use
\[
 \nu_Q(B(z,r))\lesssim r^{n+1}
       \le K^{2(n+1-m)}r^m
       \lesssim K^{2(n+1)}\gamma r^m,
\]
where $\gamma\gtrsim K^{-2m}$ follows from nonemptiness of $Y$. If $r>R^{1/2}$, cover $Q$ by $O(1)$ balls of radius $R^{1/2}$ to obtain
\[
 \nu_Q(\R^{n+1})\lesssim K^{2(n+1)}\gamma R^{m/2}
                         \le K^{2(n+1)}\gamma r^m.
\]
This proves the claim. Its small-radius bound uses the Lebesgue density $\mathbf1_Y$, not a Frostman condition for the original measure below radius one. Its large-radius bound comes from restriction to $Q$, not a growth condition for the original weight above $R^{1/2}$.

\begin{proposition}[Weighted trilinear estimate on a sparse set]\label{prop:halfscale-trilinear}
Suppose that $f_1,f_2,f_3$ have $\nu$-transverse cone normals with $\nu\ge K^{-O(1)}$. For $2\le p\le3$ and every $\eta>0$,
\begin{equation}\label{eq:halfscale-trilinear}
 \left\|\prod_{i=1}^3|Ef_i|^{1/3}\right\|_{L^p(Y)}
 \lesssim_\eta K^{O(1)}\gamma^{1/p}
       R^{T_{n,m}(p)+\eta}\prod_{i=1}^3\norm{f_i}2^{1/3}.
\end{equation}
The exponent of $K$ is fixed before $\delta_1$ is chosen.
\end{proposition}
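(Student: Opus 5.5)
Decompose $Y$ into the pieces $Y\cap Q$, $Q\in\mathcal Q$, estimate each piece with the local bilinear estimate, and then sum over $Q$ using the packet-energy bounds.

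\textbf{Step 1 (trilinear to bilinear).} On $Y\cap Q$, write $\prod_{i=1}^3|Ef_i|^{1/3}=\prod_{\{i,j\}}|Ef_iEf_j|^{1/6}$, the product running over the three pairs. By H\"older with three equal exponents,
\[
 \left\|\prod_{i=1}^3|Ef_i|^{1/3}\right\|_{L^p(Y\cap Q)}\le\prod_{i<j}\left\||Ef_iEf_j|^{1/2}\right\|_{L^p(\nu_Q)}^{1/3}.
\]
Transversality $\nu\ge K^{-O(1)}$ gives pairwise angular separation $\omega\ge K^{-O(1)}$ between the supports.

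\textbf{Step 2 (local bilinear bound).} By \eqref{eq:local-energy-approx}, on $Q$ we may replace each $Ef_i$ by $Ef_{i,Q}$ up to $\Rap(R)\norm{f_i}2$. Apply Lemma~\ref{lem:CHL-unit} with $S\sim R^{1/2}$ (after a fixed enlargement, $\nu_Q$ is supported in a ball of that radius) and with $A=[\nu_Q]_m\lesssim K^{2(n+1)}\gamma$ from the claim above. This gives
\[
 \left\||Ef_{i,Q}Ef_{j,Q}|^{1/2}\right\|_{L^p(\nu_Q)}\lesssim K^{O(1)}\gamma^{1/p}R^{D_{n,m}(p)/2+\eta}\,\mathcal E_i(Q)^{1/4}\mathcal E_j(Q)^{1/4}.
\]
Taking the product over the three pairs and the cube root yields
\[
 \left\|\prod_{i=1}^3|Ef_i|^{1/3}\right\|_{L^p(Y\cap Q)}\lesssim K^{O(1)}\gamma^{1/p}R^{D/2+\eta}G_Q+\Rap(R)\prod_i\norm{f_i}2^{1/3}.
\]
The $\Rap$ terms summed over the $R^{O(1)}$ cubes remain negligible.

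\textbf{Step 3 (summation).} Raise to the $p$-th power and sum over $Q$; the sets $Y\cap Q$ have bounded overlap by the finite coloring. This gives
\[
 \left\|\prod_{i=1}^3|Ef_i|^{1/3}\right\|_{L^p(Y)}\lesssim K^{O(1)}\gamma^{1/p}R^{D/2+\eta}\|G_Q\|_{\ell^p(\mathcal Q)}.
\]
Now apply \eqref{eq:energy-lp}, which contributes $\nu^{-O(1)}R^{3/(2p)-1/2+\eta+O(\delta_2)}$. Here $\nu^{-O(1)}=K^{O(1)}$, and $\delta_2\ll\eta$ lets the $O(\delta_2)$ be absorbed after relabeling $\eta$. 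The resulting exponent is $D/2+3/(2p)-1/2=T_{n,m}(p)$, as required.

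\textbf{Main obstacle.} The delicate point is Step 2. One must check that the $L^2$ norms entering Lemma~\ref{lem:CHL-unit} can be taken to be the local energies $\mathcal E_i(Q)$, rather than the global norms $\norm{f_i}2$; otherwise the summation over $Q$ gains nothing. Three things must be verified:
\begin{itemize}
\item the wave-packet localization $f_{i,Q}$ preserves the angular separation, which holds since the frequency enlargement is $O(R^{-1/2})$;
\item the $K^{O(1)}$ loss stays independent of $\delta_1$, which holds since $\omega^{-O(1)}$ and $A^{1/p}$ are polynomial with fixed exponents;
\item the bilinear estimate is applied at scale $S=R^{1/2}$, so that its loss $S^{D}$ equals $R^{D/2}$; this is exactly where the half-scale condition enters.
\end{itemize}
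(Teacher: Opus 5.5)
Your proposal is correct and follows the paper's proof essentially step for step. On each half-scale cube you localize to $f_{i,Q}$, split the trilinear product into the three bilinear pairs by H\"older, apply Lemma~\ref{lem:CHL-unit} with $S\sim R^{1/2}$ and $[\nu_Q]_m\lesssim K^{2(n+1)}\gamma$ to get $K^{O(1)}\gamma^{1/p}R^{D_{n,m}(p)/2+\eta}G_Q$, and then sum with \eqref{eq:energy-lp}. The one detail you leave implicit, and the paper states, is that the additive $\Rap(R)\prod_i\norm{f_i}2^{1/3}$ error is absorbed into the main term using $\gamma\gtrsim K^{-2m}$, which holds because $Y$ is nonempty.
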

\begin{proof}
Use \eqref{eq:local-energy-approx} on each $Q$. Apply Lemma~\ref{lem:CHL-unit} to the three pairs of localized functions and the measure $\nu_Q$. Three-fold transversality implies pairwise angular separation bounded below by a fixed power of $\nu$. The identity
\[
 \prod_{i=1}^3|Ef_{i,Q}|^{1/3}
 =\prod_{1\le i<j\le3}|Ef_{i,Q}Ef_{j,Q}|^{1/6}
\]
and H\"older give
\begin{equation*}
 \left\|\prod_{i=1}^3|Ef_{i,Q}|^{1/3}\right\|_{L^p(Y\cap Q)}
 \lesssim_\eta K^{O(1)}\gamma^{1/p}
 R^{D_{n,m}(p)/2+\eta}G_Q.
\end{equation*}
The cubes $Q$ are disjoint. Taking their $\ell^p$ sum and using Lemma~\ref{lem:energy-summation}, we obtain
\[
 \left\|\prod_{i=1}^3|Ef_i|^{1/3}\right\|_{L^p(Y)}
 \lesssim_\eta K^{O(1)}\gamma^{1/p}
 R^{D_{n,m}(p)/2+3/(2p)-1/2+\eta+O(\delta_2)}
              \prod_{i=1}^3\norm{f_i}2^{1/3}+\Err.
\]
Apply the preceding estimates with smaller error exponents and then choose $\delta_2$ sufficiently small. The error in replacing $Ef_i$ by $Ef_{i,Q}$ is negligible: the other factors have polynomial pointwise bounds, the number of parent cubes is polynomial in $R$, and the packet tails have arbitrary decay. Homogeneity, after normalizing the three nonzero $L^2$ norms separately, preserves the product of norms on the right. Finally $\gamma\gtrsim K^{-2m}$ allows this error to be absorbed. This proves \eqref{eq:halfscale-trilinear}.
\end{proof}

\subsection{Three-fold broadness and the discrete induction}
We prove a new discrete estimate, with the same density parameter and the preceding trilinear estimate in the broad case. Take $k=3$, so that $r_k=\infty$, and set
\[
 \beta=\beta^{\mathrm{tri}}_{n,m}(p).
\]
The assertion is
\begin{equation*}
 \norm{Ef}{L^\infty(Y)}
 \le C_\varepsilon R^{\beta+\varepsilon}
              (\gamma/M)^{1/p}\norm f2,
\end{equation*}
assuming equal local $L^\infty$ norms on the $M$ old cubes. Bounded radii and negligible local norms are treated as in Section~\ref{sec:discrete}. This induction uses $2\le p\le3$ but does not require $p\le q$.

On the broad cubes, use the selection of three transverse fine caps from Section~\ref{sec:broad}. Apply Lemma~\ref{lem:translations} with the present exponent $p$. Pigeonhole the common cap and translation tuple. We obtain functions $f_1,f_2,f_3$, with $\norm{f_i}2\lesssim\norm f2$, and a union $Y'\subset Y$ of $M'\gtrapprox K^{-O(1)}M$ old cubes such that
\[
 \Lambda\lesssim K^{O(1)}
       \left\|\prod_{i=1}^3|Ef_i|^{1/3}\right\|_{L^p(B)}
 \qquad(B\subset Y').
\]
Here $\Lambda\sim\norm{Ef}{L^\infty(B)}$ is the common original size. Summing the $p$-th powers gives
\[
 \norm{Ef}{L^\infty(Y)}
 \lessapprox K^{O(1)}M^{-1/p}
       \left\|\prod_{i=1}^3|Ef_i|^{1/3}\right\|_{L^p(Y')}.
\]
The density of $Y'$ is at most $\gamma$. Proposition~\ref{prop:halfscale-trilinear} therefore gives
\begin{equation*}
 \norm{Ef}{L^\infty(Y)}
 \lessapprox K^{O(1)}R^{T_{n,m}(p)+\varepsilon/4}
                    (\gamma/M)^{1/p}\norm f2.
\end{equation*}
Choose $\delta_1$ small enough to absorb the broad $K$-loss. No induction hypothesis, or uniform parent-cube occupancy, is used in this broad estimate.

For the narrow cubes we use Section~\ref{sec:narrow} with $k=3$ and $r_k=\infty$. A two-dimensional plane meets the normalized cone in at most $O(1)$ $K^{-1}$-caps when the normal distance is $O(K^{-2})$. Thus the initial narrow inequality is a triangle inequality with only $O(1)$ summands. The Lorentz localization, the source--cell selections, and both incidence counts are unchanged. Their main power of $K$ is
\begin{align}
 -\frac{n-1}{2}+\frac{m+1}{p}-2\beta
 &=2\left(\frac{m+1}{2p}-\frac{n-1}{4}\right)-2\beta\le0.
       \label{eq:tri-narrow-cancellation}
\end{align}
The negative contribution $-2\varepsilon$ from $(R/K^2)^{\beta+\varepsilon}$ absorbs the repeated $K^{O(\delta_1)}$ and logarithmic losses, exactly as in \eqref{eq:close}. The narrow calculation requires $p\ge2$ and \eqref{eq:tri-narrow-cancellation}; it does not use $p\le q$ or $\beta\ge A_{n,m,3},C_{n,m}$. Consequently the new discrete induction closes.

The weight reduction in Section~\ref{subsec:weighted-reduction}, with $r_k=\infty$, now gives \eqref{eq:refined-full}: its weight factor is simply $(h\gamma)^{1/p}\lesssim C_H^{1/p}$. The same reduction proves the measure assertion. At $p=3$,
\[
 D_{n,m}(3)=0,\qquad T_{n,m}(3)=0,\qquad
 \frac{m+1}{6}-\frac{n-1}{4}\le0
\]
for $n\ge3$ and $m\le n-1$. Thus $\beta^{\mathrm{tri}}_{n,m}(3)=0$. Interpolating with \eqref{eq:Linfty} proves the assertion for $p\ge3$. The absence of an $R^\varepsilon$ loss for $p\ge q$ still comes from \eqref{eq:unweighted-q}. This completes the proof of Theorem~\ref{thm:bilinear-refinement}\textup{(ii)}.

\subsection{The endpoint in four-dimensional space-time}
For $n=3$ and $m=2$,
\[
 D_{3,2}(p)=\max\{0,2/p-3/4\},\qquad
 T_{3,2}(p)=
 \begin{cases}
 5/(2p)-7/8,&2\le p\le8/3,\\
 3/(2p)-1/2,&8/3\le p\le3.
 \end{cases}
\]
In the second range, $T_{3,2}(p)=N_{3,2,3}(p)\ge0$. This proves the upper bound in \eqref{eq:sharp-R4}; its sharpness is established in Section~\ref{subsec:sharpness}. At $p=8/3$, both the broad and narrow exponents equal $1/16$, and the main narrow cancellation is
\[
 -1+3\cdot\frac38-2\cdot\frac1{16}=0.
\]
The summation behind the broad exponent is
\[
 \|G_Q\|_{\ell^{8/3}}
 \le\|G_Q\|_{\ell^2}^{1/4}\|G_Q\|_{\ell^3}^{3/4}
 \lesssim_\eta\nu^{-O(1)}R^{1/16+\eta+O(\delta_2)}
                      \prod_i\norm{f_i}2^{1/3}.
\]
The remaining interpolation assertion \eqref{eq:R4-refined-originalp} is verified in Section~\ref{subsec:refined-optimization}.

\section{Endpoint optimization}\label{sec:optimization}
The preceding proof gives a finite family of piecewise affine bounds in $1/p$. We now interpolate and compare these bounds.

\subsection{The general-weight small-\texorpdfstring{$p$}{p} bound}
Take $k=n+1$ in Theorem~\ref{thm:full}\textup{(i)}. The relevant differences are
\begin{align*}
 A_{n,m,n+1}-N_{n,m,n+1}
   &=\frac{n-m}{2n(n+1)}-\frac n2\left(\frac12-\frac1p\right),\\
 A_{n,m,n+1}-C_{n,m}
   &=\frac1{2(n+1)}-\frac{n+1}{2}\left(\frac12-\frac1p\right).
\end{align*}
Both are nonnegative when
\[
 0\le\frac12-\frac1p
 \le\min\left\{\frac{n-m}{n^2(n+1)},\frac1{(n+1)^2}\right\}.
\]
Hence the maximum is
\[
 A_{n,m,n+1}(p)
 =\frac{m+1}{2(n+1)}-\frac{n+1+m}{2}\left(\frac12-\frac1p\right),
\]
which proves Corollary~\ref{cor:small-p-general}. When $m\ge1$, the first threshold in \eqref{eq:small-p-general-endpoint} is the smaller one. This proves the simplified endpoint formula as well.

\subsection{The explicit endpoint for weight dimension \texorpdfstring{$n-1$}{n-1}}
In this subsection and the numerical formulas that follow, set $m=n-1$ and use the abbreviations \eqref{eq:special-notation}.

At
\begin{equation*}
 \frac12-\frac1p=\frac1{k(n+1)},
\end{equation*}
one has $A_{n,k}=C_n$. At this point, $N_{n,k}\le C_n$ is equivalent to
\begin{equation}\label{eq:k-quadratic}
 2k^2-(n+4)k+1\le0.
\end{equation}
For $n\ge3$, the largest integer $k\in[3,n+1]$ satisfying \eqref{eq:k-quadratic} is
$k_0=\lfloor(n+3)/2\rfloor$. This can be checked by substituting $n=2j$ and $n=2j+1$ separately. The value at $k_0+1$ is positive, and the quadratic is increasing beyond its larger root.

For $1/2-1/p\ge1/[k_0(n+1)]$, both differences $A_{n,k_0}-C_n$ and $N_{n,k_0}-C_n$ are nonincreasing. Hence $C_n$ controls the maximum throughout the interval up to $1/2-1/p=1/(n+1)$. Since
\[
 C_n(p)=\frac{n-1}{2}\left(\frac1p-\frac1{q}\right),
\]
this proves Corollary~\ref{cor:clean}.

The explicit endpoints are
\begin{center}
\begin{tabular}{ccccc}
\toprule
Ambient space & Weight dimension & $k_0$ & $p_0$ & $\beta(p_0)$\\
\midrule
$\R^4$ & $2$ & $3$ & $12/5$ & $1/6$\\
$\R^5$ & $3$ & $3$ & $30/13$ & $1/5$\\
$\R^6$ & $4$ & $4$ & $24/11$ & $1/4$\\
$\R^7$ & $5$ & $4$ & $28/13$ & $15/56$\\
\bottomrule
\end{tabular}
\end{center}
These are baseline endpoints. The refined family is combined with them through \eqref{eq:refined-envelope}; the entries are not assertions of optimality.

\subsection{Some lower-dimensional results}\label{subsec:refined-optimization}
For $n=3$ and $m=2$, combining the two parts of Theorem~\ref{thm:main} and interpolating in $1/p$ gives
\begin{equation}\label{eq:B3-refined}
 \widetilde B_{3,2}(p)=
 \begin{cases}
 3/p-9/8,&2\le p\le36/17,\\[2pt]
 33/(14p)-23/28,&36/17\le p\le8/3,\\[2pt]
 3/(2p)-1/2,&8/3\le p\le3,\\[2pt]
 0,&p\ge3.
 \end{cases}
\end{equation}
Indeed, the relevant vertices in the $(1/p,\beta)$ plane are
\[
 (1/2,3/8),\quad(17/36,7/24),\quad(3/8,1/16),\quad(1/3,0),\quad(0,0).
\]
The first two come from Theorem~\ref{thm:main}\textup{(i)} with $k=4$, the next two from part~\textup{(ii)}, and the last from the $L^\infty$ estimate. The segment joining $(17/36,7/24)$ to $(3/8,1/16)$ has slope $33/14$ and intercept $-23/28$. Taking the lower convex envelope yields \eqref{eq:B3-refined}. In particular,
\[
 \widetilde B_{3,2}(12/5)=9/56,
\]
which proves \eqref{eq:R4-refined-originalp}.

Some resulting estimates in low dimensions are recorded below. Each entry gives a power $\beta$ for which
\[
 \norm{Ef}{L^p(B_R,H)}
 \lesssim_\varepsilon C_H^{1/p}R^{\beta+\varepsilon}\norm f2.
\]
\begin{center}
\begin{tabular}{cccc}
\toprule
Ambient space & $m$ & $p$ & $\beta$\\
\midrule
$\R^4$ & $2$ & $12/5$ & $9/56$\\
$\R^4$ & $2$ & $8/3$ & $1/16$\\
$\R^4$ & $2$ & $3$ & $0$\\
$\R^5$ & $3$ & $8/3$ & $1/16$\\
$\R^6$ & $4$ & $8/3$ & $1/16$\\
\bottomrule
\end{tabular}
\end{center}
The last two entries follow directly from Theorem~\ref{thm:main}\textup{(ii)}. All these powers allow an $R^\varepsilon$ loss. In particular, the zero power at $p=3$ in $\R^4$ does not assert an epsilon-free endpoint.

\section{Geometric consequences}\label{sec:geometric}
\subsection{Support neighborhoods of general varieties}\label{sec:variety}
Let $Z$ be an $\ell$-dimensional transverse complete intersection as in Corollary~\ref{cor:variety}, and put
\[
 U=B_R\cap\Nr_{E\sqrt R}(Z).
\]
Wongkew's neighborhood estimate \cite{Wongkew}, in the covering form used in \cite{DLWZ}, gives a cover of $U$ by at most
\[
 D^{O(1)}E^{O(1)}R^{\ell/2}
\]
balls of radius $\sqrt R$. If $E\sqrt R>R$, increase the fixed power of $E$ and use the elementary cover of $B_R$. The weight dimension is $m$, so
\begin{equation*}
 \int_UH\lesssim C_HD^{O(1)}E^{O(1)}R^{(\ell+m)/2}.
\end{equation*}
For $p\le s\le q$, H\"older and the combined estimate \eqref{eq:combined-upper} imply
\begin{align*}
 \norm{Ef}{L^p(U,H)}
 &\le\norm{Ef}{L^s(U,H)}
          \left(\int_UH\right)^{1/p-1/s}\\
 &\lesssim C_H^{1/p}D^{O(1)}E^{O(1)}
  R^{\widetilde B_{n,m}(s)+\frac{\ell+m}{2}(1/p-1/s)+\varepsilon}\norm f2.
\end{align*}
The error outside $U$ is controlled by \eqref{eq:totalmass}. Taking the infimum over $s$ proves Corollary~\ref{cor:variety}. The same proof applies to measures satisfying \eqref{eq:measure-weight}.

For the previously considered case $n=3$, weight dimension $m=2$, and variety dimension $\ell=3$, minimizing with \eqref{eq:B3-refined} gives
\begin{equation*}
 B_{3,2,3}^{\mathrm{safe}}(p)=\frac5{2p}-\frac89,
 \qquad 2\le p\le36/17.
\end{equation*}
The minimizer is $s=36/17$, and the exponent at $p=2$ is $13/36$. With $n=3$, $m=2$, and $\ell=2$, one obtains
\[
  B_{3,2,2}^{\mathrm{safe}}(p)=\frac2p-\frac{11}{16},
  \qquad 2\le p\le8/3,
\]
with minimizer $s=8/3$. This improves the corresponding support-neighborhood consequence without any additional angular hypothesis.

This argument uses neither wave packet directions nor preservation of tangency under rescaling. It applies to degenerate varieties but is not a full tangential refined estimate.

\subsection{Nondegenerate affine planes}\label{sec:plane}
Here the weight dimension is $m=n-1$, whereas $\ell$ is the dimension of the affine plane. We prove Theorem~\ref{thm:plane} by a single reduction to a lower-dimensional full cone. This is separate from the scale induction of Sections~\ref{sec:broad}--\ref{sec:narrow}.

\subsubsection{The standard plane and explicit localization assumptions}
Let $c=n+1-\ell$ and write
\[
 x=(x',x''),\qquad \xi=(\xi',\zeta),
 \quad x',\xi'\in\R^{\ell-1},\quad x'',\zeta\in\R^c.
\]
First take $V=\{x''=0\}$, with the time variable unrestricted. The usual scale-$R$ tangency hypothesis implies, up to rapidly decaying tails,
\begin{equation}\label{eq:flatconditions}
 \begin{gathered}
 \supp f\subset\{(\xi',\zeta):|\zeta|\lesssim E R^{-1/2},\quad |\xi|\sim1\},\\
 Ef\text{ is supported in }\{|x''|\lesssim E\sqrt R\}
 \text{ inside }B_R.
 \end{gathered}
\end{equation}
For clarity, these two conditions may instead be imposed directly. The first follows because the normal direction of every contributing cap is within $O(ER^{-1/2})$ of $V$; the cap's own angular width is $O(R^{-1/2})$. The second is the spatial neighborhood condition for the wave packets. These are the only consequences of tangency used in this subsection.

If $E$ is comparable to or larger than $\sqrt R$, a sufficiently large dimensional power of $E$ absorbs the elementary $L^\infty$ bound and the total weighted mass. We may therefore suppose $E\le c_0\sqrt R$ with $c_0$ small. On the frequency support, $|\xi'|$ is then bounded above and below by fixed positive constants.

\subsubsection{The projected weight}
Replace $H$ by $H\mathbf1_{B_R}$ and define
\begin{equation*}
 H_V(x',t)=\int_{|x''|\lesssim E\sqrt R}H(x',x'',t)\dd x'',
 \qquad A_0=(E\sqrt R)^c.
\end{equation*}
Then $0\le H_V\lesssim A_0$. For $1\le r\le\sqrt R$, cover
\[
 B^\ell(z',r)\times\{x'':|x''|\lesssim E\sqrt R\}
\]
by $O((E\sqrt R/r)^c)$ ambient balls of radius comparable to $r$. Fixed endpoint enlargements are handled by a finite cover, as before. Thus
\begin{equation}\label{eq:projectweight}
 \int_{B^\ell(z',r)}H_V
 \lesssim C_H A_0r^{n-1-c}=C_H A_0r^{\ell-2}.
\end{equation}
After division by a sufficiently large fixed multiple of $A_0$, this is an $(\ell-2)$-dimensional weight on $\R^\ell$, with the same upper scale $\sqrt R$. It satisfies exactly the lower-dimensional version of \eqref{eq:weight}.

\subsubsection{The radial phase error}
Fixing $\zeta$ does not turn the phase into the lower-dimensional cone phase: the difference
\[
 d_\zeta(\xi')=\sqrt{|\xi'|^2+|\zeta|^2}-|\xi'|
\]
must be retained. On the relevant compact frequency set,
\[
 |\partial_{\xi'}^\alpha d_\zeta(\xi')|
 \lesssim_\alpha|\zeta|^2.
\]
Consequently, after fixed cutoffs in $s=t/R$ and $\xi'$, every derivative of fixed order of
\[
 a_\zeta(s,\xi')=\exp(iR s\,d_\zeta(\xi'))
\]
is bounded by a fixed power of $E$, because $R|\zeta|^2\lesssim E^2$. Expand the cutoff function in a Fourier series on a fixed large box:
\begin{equation}\label{eq:series}
 a_\zeta(s,\xi')
 =\sum_{a\in\mathbb Z,\ b\in\mathbb Z^{\ell-1}}
    c_{a,b}(\zeta)e^{i\omega_0as}e^{i\omega_0 b\cdot\xi'},
 \qquad
 \sum_{a,b}\sup_\zeta|c_{a,b}(\zeta)|\lesssim E^{O(1)}.
\end{equation}
Here $\omega_0$ is the fixed Fourier-series normalization. The coefficient estimate follows by integrating by parts more than $\ell+1$ times; taking still more derivatives gives arbitrary summable decay in $(a,b)$, at the expense of a larger fixed power of $E$.

For each mode, multiplication by $e^{i\omega_0b\cdot\xi'}$ preserves the $L^2$ norm of $f(\cdot,\zeta)$. The factors $e^{i\omega_0at/R}$ and $e^{ix''\cdot\zeta}$ have modulus one. The lower-dimensional operator in that mode is therefore a cone extension applied to a phase modulation of $f(\cdot,\zeta)$.

Apply Minkowski to \eqref{eq:series} and the $\zeta$ integral. Since the mode modulus is independent of $x''$, its weighted norm is computed with $H_V$. The combined estimate \eqref{eq:combined-upper} in spatial dimension $\ell-1$ and weight dimension $\ell-2$, together with \eqref{eq:projectweight}, gives
\begin{align}
 \norm{Ef}{L^p(B_R,H)}
 &\lesssim E^{O(1)}A_0^{1/p}C_H^{1/p}
 R^{\widetilde B_{\ell-1}(p)+\varepsilon}
 \int_{|\zeta|\lesssim E R^{-1/2}}
                       \norm{f(\cdot,\zeta)}2\dd\zeta
 +\Err\notag\\
 &\lesssim C_H^{1/p}E^{O(1)}
 R^{\widetilde B_{\ell-1}(p)+c/(2p)-c/4+\varepsilon}\norm f2+\Err.
 \label{eq:flat-reduction}
\end{align}
The last inequality is Cauchy--Schwarz in $\zeta$. The error outside the spatial neighborhood in \eqref{eq:flatconditions} is negligible by \eqref{eq:totalmass}. This proves \eqref{eq:plane-general} for the standard plane.

\subsubsection{A fixed Lorentz normalization}
Suppose that \eqref{eq:chi} holds. The Minkowski quadratic form is positive definite on the Euclidean normal space $V^\perp$. A Lorentz transformation with Euclidean norm and inverse norm controlled by $n$ and $\chi_0$ sends $V$ to a plane containing the time axis; a spatial rotation then puts it in the standard position above.

One way to see the quantitative control is to note that
\[
 \chi(V)=2|\operatorname{Proj}_V e_t|^2-1.
\]
The normalized time-like tangent vector consequently has the form $(v,1)$ with
\[
 |v|\le\sqrt{(1-\chi_0)/(1+\chi_0)}<1.
\]
The Lorentz boost sending this direction to the time axis has norm and inverse norm bounded in terms of $\chi_0$. The remaining normalization is an orthogonal spatial rotation.

The corresponding inverse-transpose map preserves the cone in frequency space. Its induced surface density is smooth and bounded above and below on the fixed truncated chart. Thus the $L^2$ norm of $f$, the frequency annulus, the spatial radius, and the tangency width change only by constants depending on $\chi_0$. A finite partition into charts handles the transformed annulus. The transformed weight satisfies the same dimensional condition up to the corresponding square-root scale, with constants depending on $\chi_0$; fixed scale enlargements require only finite covers. An affine translation is absorbed by a unimodular phase on $f$.

There is only one such normalization, fixed independently of $R$. Its cost is not paid at every step of the induction. Applying \eqref{eq:flat-reduction} proves the first assertion of Theorem~\ref{thm:plane}.

\subsubsection{The explicit plane endpoint}
Apply the baseline full-dimensional theorem in spatial dimension $\ell-1$, with weight dimension $\ell-2$ and multilinearity $k=\ell$, noting that the combined envelope is no larger. If
\[
 0\le \left(\frac12-\frac1p\right)\le\frac1{\ell(\ell-1)^2},
\]
then its maximum is bounded by
\[
 \widetilde B_{\ell-1}(p)\le\frac{\ell-1}{2\ell}-(\ell-1)\left(\frac12-\frac1p\right).
\]
Substituting this into \eqref{eq:plane-general} gives
\[
 \frac{\ell-1}{2\ell}-(\ell-1)\left(\frac12-\frac1p\right)-\frac{n+1-\ell}{2}\left(\frac12-\frac1p\right)
 =\frac{\ell-1}{2\ell}-\frac{n+\ell-1}{2}\left(\frac12-\frac1p\right),
\]
which is \eqref{eq:plane-main}.

For $n=2$, $m=1$, and $p=12/5$, Corollary~\ref{cor:sharp-R3} gives the exponent $1/6$. Substituting this into \eqref{eq:flat-reduction} with $n=3$ and $\ell=3$ gives the exponent $1/6+5/24-1/4=1/8$. Thus, in $\R^4$, for functions tangent to a fixed nondegenerate three-dimensional affine plane,
\begin{equation*}
 \norm{Ef}{L^{12/5}(B_R,H)}
 \lesssim_{\varepsilon,\chi_0} C_H^{5/12}E^{O(1)}
                              R^{1/8+\varepsilon}\norm f2.
\end{equation*}
The refined exponent without a tangency assumption is $9/56$ by \eqref{eq:R4-refined-originalp}, whereas the baseline exponent was $1/6$. The nondegenerate-plane hypothesis distinguishes the planar $1/8$ bound from these unrestricted estimates.

\section{Necessary conditions}\label{sec:examples}\label{sec:counterexample}
We now test the estimates under exactly the half-scale hypothesis \eqref{eq:weight}. In every construction, $R$ tends to infinity through a sequence, $H_R$ is an indicator function supported in $B_R$, and its ball constant is bounded independently of $R$. Thus, if
\begin{equation}\label{eq:necessary-assumption}
 \norm{Ef}{L^p(B_R,H)}
 \lesssim_\varepsilon C_H^{1/p}R^{\beta+\varepsilon}\norm f2
\end{equation}
holds for every $\varepsilon>0$, a lower bound $\norm{Ef_R}{L^p(H_R)}/\norm{f_R}2\gtrsim R^b$ forces $\beta\ge b$. We restrict to $p\ge2$. All obstructions also apply to the measure formulation in Corollary~\ref{cor:measure}.

For convenience, set
\begin{align}
 P_{n,m}(p)&=\frac{m+1}{2p}-\frac{n-1}{4},\notag
       \\
 L_{n,m}(p)&=\frac{m}{2(n+1)}
       -\frac{n+1+m}{2}\left(\frac12-\frac1p\right),\notag\\
 \mathfrak b_{n,m,j}(p)&=
       \frac{m-j}{2(n+1-j)}
       -\frac{n+1+m-j}{2}\left(\frac12-\frac1p\right),\label{eq:hybrid-obstruction}
\end{align}
and define the admissible hybrid indices by
\begin{equation*}
 \mathcal J_{n,m}=\{j\in\mathbb Z:0\le j\le m,\ n-j\ge4\}.
\end{equation*}

\begin{proposition}[Necessary conditions]\label{prop:lower-bounds}
Let $n\ge2$ and $0<m\le n-1$. Any exponent $\beta$ in \eqref{eq:necessary-assumption} satisfies
\begin{equation}\label{eq:universal-lower}
 \beta\ge\max\{0,P_{n,m}(p)\}.
\end{equation}
If $n\ge3$ and $(n+1)/2\le m\le n-1$, it also satisfies $\beta\ge L_{n,m}(p)$. For every $j\in\mathcal J_{n,m}$,
\begin{equation}\label{eq:hybrid-lower}
 \beta\ge\mathfrak b_{n,m,j}(p).
\end{equation}
In particular, taking $j=0$ extends the lattice obstruction $L_{n,m}$ to every $0<m\le n-1$ when $n\ge4$.
\end{proposition}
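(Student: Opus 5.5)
The plan is to prove each lower bound in Proposition~\ref{prop:lower-bounds} by an explicit pair $(f_R,H_R)$, with $H_R$ an indicator in $B_R$ whose constant in \eqref{eq:weight} is bounded uniformly in $R$. In each case I then compute $\norm{Ef_R}{L^p(H_R)}/\norm{f_R}2\gtrsim R^b$, which forces $\beta\ge b$ as explained before the statement. There are four constructions: a trivial one for $0$, a Knapp (light-plank) example for $P_{n,m}$, a phase-aligned lattice example for $L_{n,m}$, and a hybrid example for $\mathfrak b_{n,m,j}$.

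\emph{The bounds $0$ and $P_{n,m}$.} For $\beta\ge0$, take $f$ a fixed smooth bump on the annulus. Then $|Ef|\gtrsim1$ on a unit ball $B(0,c)$, and with $H=\mathbf 1_{B(0,c)}$ both sides are comparable to $1$.

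For $P_{n,m}$, take $f=\mathbf 1_\theta$, where $\theta$ is a cap of radial length $1$ and angular width $R^{-1/2}$. Then $\norm f2\sim R^{-(n-1)/4}$, and $|Ef|\sim R^{-(n-1)/2}$ on the dual plank. In the null coordinates of Section~\ref{subsec:charts}, this plank has one axis of length $R$, $n-1$ axes of length $R^{1/2}$ and one unit axis. Inside the plank I take
\[
 H=\mathbf 1\bigl\{|s|\le R,\ (y_1,\dots,y_{m-1})\in[0,R^{1/2}]^{m-1},\ \text{remaining coordinates}\ =O(1)\bigr\}
\]
when $m$ is an integer. This set is a unit-thick $m$-dimensional slab, so $\int_{B(z,r)}H\lesssim r^m$ for $1\le r\le R^{1/2}$. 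For non-integer $m$, I replace the $R^{1/2}$-directions by a standard Cantor-type union of unit cubes of dimension $m-1$. In either case $\int H\sim R^{(m+1)/2}$, and the ratio is
\[
 R^{-(n-1)/2+(m+1)/(2p)+(n-1)/4}=R^{P_{n,m}(p)}.
\]
The only point to check is that the ball condition is needed only up to radius $R^{1/2}$; this is exactly what lets the long null axis contribute an extra full power.

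\emph{The lattice bound $L_{n,m}$.} Here I follow the phase-alignment principle of \cite[Section 3]{DLWZ} in the null coordinates \eqref{eq:null-operator}. The function $f$ is a sum of $R^{1/2}$-separated unit-scale bumps on a lattice in $(\eta,\rho)$, with no cancellation, so that $\norm f2^2$ equals the number of bumps. The phase $s|\eta|^2/(2\rho)+y\cdot\eta+v\rho$ is then aligned modulo $2\pi$ on a periodic lattice of unit balls in $B_R$. At those points $|Ef|$ equals the full $\ell^1$ sum of the amplitudes. The weight $H$ is the union of a subfamily of these unit balls, chosen so that every $R^{1/2}$-ball contains about $R^{m/2}$ of them. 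The condition $(n+1)/2\le m$ is what I expect to make the lattice spacings compatible with this density at all scales $r\le R^{1/2}$. Optimizing the free spacing parameter should produce the exponent $L_{n,m}(p)$.

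\emph{The hybrid bound $\mathfrak b_{n,m,j}$.} I combine the two mechanisms. In $j$ spatial directions I use the Knapp tensor factor, which carries weight dimension $j$ at unit thickness. In the remaining $n-j$ directions I use the arithmetic construction of \cite[Proposition 5.5]{HarrisRS} and \cite[Section 5]{DHL}, built from integer points on a lower-dimensional cone. The hypothesis $n-j\ge4$ is what guarantees that the number of such integer points has the expected order. The weight is the product of a $j$-dimensional slab with an $(m-j)$-dimensional lattice weight in the complementary $(n+1-j)$-dimensional space. Repeating the calculation for $L$ with $(n,m)$ replaced by $(n-j,m-j)$, and adding back the Knapp factor, should give \eqref{eq:hybrid-obstruction}. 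Taking $j=0$ then gives the final claim.

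I expect the main obstacle to be the lattice and hybrid steps. There one must verify the half-scale ball condition uniformly for all radii $1\le r\le R^{1/2}$, including anisotropic ball counts across the null axes. One must also check that the $\ell^1$ phase alignment genuinely holds on a set of the claimed measure. The Knapp and trivial cases are routine.
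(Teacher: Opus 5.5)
Your four constructions (a fixed bump, a lightplank, a phase-aligned null lattice, and a Knapp$\times$integer-cone hybrid) are the ones the paper uses. For $m\ge1$ your slab or Cantor weight is a valid alternative to the paper's periodic array $A_R$. Your hybrid bookkeeping is also right: $\mathfrak b_{n,m,j}=L_{n-j,m-j}-\frac j2(\frac12-\frac1p)$, where the correction is the Knapp factor $R^{-j/4+j/(2p)}$. There are, however, two genuine gaps.

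\emph{The lightplank fails for $0<m<1$.} A Cantor set of dimension $m-1<0$ is meaningless. Any weight containing the full unit-thick $s$-axis of length $R$ has $\int_{B(z,r)}H\gtrsim r$, which violates \eqref{eq:weight} when $m<1$. You must sparsify the long axis itself. For instance, take unit boxes centred at $D_R\mathbb Z$ on $|s|\le cR$, with $D_R=R^{(1-m)/2}$. This has mass $R/D_R=R^{(m+1)/2}$ and ball count $1+r/D_R\lesssim r^m$ for $r\le\sqrt R$. The case is not vacuous. For $n\ge3$ and $m<1$ one has $P_{n,m}\le0$, but $P_{2,m}(p)>0$ for $p<2(m+1)$. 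Corollary~\ref{cor:sharp-R3} uses exactly $n=2$, $2/3\le m<1$ for its sharpness.

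\emph{The lattice step is only a heuristic, and as written it would not work.} If $\rho$ also runs over a lattice, the term $s|\eta|^2/(2\rho)$ cannot be aligned modulo $2\pi$ on any useful $s$-lattice. The missing idea is to pin $\rho$ to a single value up to $cR^{-1}$. Then the null phase is a paraboloid phase in $\eta$ plus $v\rho$, and the DLWZ principle applies after removing the common factor $e^{3iv}$.

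Concretely, take $\eta\in Q^{-1}\mathbb Z^{n-1}+B(0,cR^{-1})$ and $\rho\in[3-cR^{-1},3+cR^{-1}]$. Take the physical set $\dist(s,12\pi Q^2\mathbb Z)\le c$, $\dist(y,2\pi Q\mathbb Z^{n-1})\le c$, with $v$ \emph{unrestricted}. The anisotropic spacings give the count $r(1+r/Q)^{n-1}(1+r/Q^2)$. The hypothesis $m\ge(n+1)/2$ is used exactly on the range $Q\le r\le Q^2$. Saturating the count at $r=\sqrt R$ forces $Q=R^{(n+1-m)/(2(n+1))}$, and then $|\Omega_R|^{1/2}|\Lambda_R|^{1/p}=R^{L_{n,m}(p)}$.

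For the hybrid you must also specify the physical set. It is the isotropic lattice $2\pi Q\mathbb Z^{d+1}+[-c,c]^{d+1}$ in $(x'',t)$, dual to the integer points with $|h|=b$, times $[-c\sqrt R,c\sqrt R]^j$, with $Q=R^{(n+1-m)/(2(d+1))}$. Its isotropy removes the restriction $m\ge(n+1)/2$, since $r^j(1+r/Q)^{d+1}\lesssim r^m$ needs only $j\le m$. This is why $j=0$ covers every $m$, which your sketch asserts but does not explain. Finally, the cone phase does not split exactly. You need to check $|t|\,\bigl|\sqrt{|\eta|^2+|\zeta|^2}-|\eta|\bigr|\lesssim cR\cdot c^2R^{-1}$ for $|\zeta|\le cR^{-1/2}$; this is what limits the Knapp factor to frequency width $R^{-1/2}$ in $\zeta$.
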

An empty family of hybrid indices contributes no bound. The proposition is proved by the constructions below. A fixed nonnegative function $f$ and a sufficiently small fixed physical ball give $\beta\ge0$.

\subsection{A lightplank for every weight dimension}\label{subsec:lightplank}
Use the null coordinates from Section~\ref{subsec:charts}. Fix a sufficiently small constant $c>0$, depending only on the dimension, and let
\begin{equation*}
 \Omega_R=\{(\eta,\rho):|\eta|\le cR^{-1/2},\ 3\le\rho\le3+c\},
 \qquad f_R=\mathbf1_{\Omega_R}.
\end{equation*}
These frequencies lie in a fixed chart of the original annulus $1\le|\xi|\le2$. The function $f_R$ in the original coordinates is obtained by undoing the smooth Jacobian; its $L^2$ norm is comparable to the chart norm. Similarly, pulling a weight back by the fixed linear change of physical coordinates preserves its ball condition up to fixed constants. Choosing the supports below sufficiently small keeps the original physical support in $B_R$.

On the lightplank
\begin{equation*}
 T_R=\{(s,y,v):|s|\le cR,\ |y|\le c\sqrt R,\ |v|\le c\},
\end{equation*}
the phase $s|\eta|^2/(2\rho)+y\cdot\eta+v\rho$ has absolute value $O(c)$. Hence
\begin{equation}\label{eq:plank-phase}
 |Ef_R|\gtrsim|\Omega_R|\sim R^{-(n-1)/2}
       \quad\text{on }T_R,
 \qquad \norm{f_R}2\sim R^{-(n-1)/4}.
\end{equation}
This lower bound follows from the real part of the defining integral.

Suppose first that $1\le m\le n-1$. Put
\[
 D_R=R^{(n-m)/(2(n-1))},\qquad
 A_R=(D_R\mathbb Z^{n-1}+[-c,c]^{n-1})
            \cap[-c\sqrt R,c\sqrt R]^{n-1}.
\]
After decreasing $c$ in the transverse box if necessary, define
\[
 H_R(s,y,v)=\mathbf1_{\{|s|\le cR\}}
             \mathbf1_{A_R}(y)\mathbf1_{\{|v|\le c\}},
 \qquad \supp H_R\subseteq T_R.
\]
Then
\begin{equation*}
 \int H_R\sim R\left(\frac{\sqrt R}{D_R}\right)^{n-1}
          =R^{(m+1)/2}.
\end{equation*}
For $1\le r\le\sqrt R$, a ball meets at most $O((1+r/D_R)^{n-1})$ transverse boxes, and therefore
\begin{equation*}
 \int_{B(z,r)}H_R\lesssim r(1+r/D_R)^{n-1}\lesssim r^m.
\end{equation*}
Indeed, for $r\le D_R$ use $r\le r^m$; for $r\ge D_R$ use
\[
 r^nD_R^{-(n-1)}=r^nR^{-(n-m)/2}\le r^m.
\]

If $0<m<1$, instead put $D_R=R^{(1-m)/2}$ and take unit-size boxes centered at $D_R\mathbb Z$ on the long $s$-axis, with $|s|\le cR$, $|y|\le c$, and $|v|\le c$. The resulting indicator has mass $\sim R/D_R=R^{(m+1)/2}$, while
\[
 \int_{B(z,r)}H_R\lesssim1+r/D_R\lesssim r^m
       \qquad(1\le r\le\sqrt R).
\]
Thus in all cases \eqref{eq:plank-phase} gives
\begin{equation}\label{eq:plank-ratio}
 \frac{\norm{Ef_R}{L^p(H_R)}}{\norm{f_R}2}
 \gtrsim R^{(m+1)/(2p)-(n-1)/4}.
\end{equation}
This proves \eqref{eq:universal-lower}.

\subsection{A lattice in null coordinates}\label{subsec:null-lattice}
This example follows the phase-alignment principle used for the paraboloid in \cite[Section 3]{DLWZ}. Assume
\[
 n\ge3,\qquad (n+1)/2\le m\le n-1,
\]
and choose
\begin{equation*}
 \kappa=\frac{n+1-m}{2(n+1)},\qquad Q=R^\kappa\in\mathbb N.
\end{equation*}
We take a sequence of radii with this property. In particular, $Q^2\le\sqrt R$. Let $c,c_0>0$ be sufficiently small fixed constants, and define in the chart \eqref{eq:null-operator}
\begin{equation*}
 \begin{split}
 \Omega_R={}&\left[(Q^{-1}\mathbb Z^{n-1}+B^{n-1}(0,cR^{-1}))
                                  \cap B^{n-1}(0,c_0)\right]\\
             &\hspace{25mm}\times[3-cR^{-1},3+cR^{-1}],
 \qquad f_R=\mathbf1_{\Omega_R}.
 \end{split}
\end{equation*}
Then
\begin{equation}\label{eq:lattice-frequency-volume}
 |\Omega_R|\sim Q^{n-1}R^{-n}.
\end{equation}
Let $\Lambda_R\subset[-cR,cR]^{n+1}$ consist of points satisfying
\begin{equation}\label{eq:lattice-physical}
 \dist(s,12\pi Q^2\mathbb Z)\le c,\qquad
 \dist(y,2\pi Q\mathbb Z^{n-1})\le c,
\end{equation}
with $v$ otherwise unrestricted, and put $H_R=\mathbf1_{\Lambda_R}$. Fixed changes in the shape of the unit neighborhoods do not affect any count. Then
\begin{equation*}
 |\Lambda_R|\sim R^{n+1}Q^{-(n+1)}.
\end{equation*}

At central frequency $(\eta,\rho)=(h/Q,3)$ and central coordinates $s=12\pi Q^2b$, $y=2\pi Qa$, the phase equals
\[
 2\pi b|h|^2+2\pi a\cdot h+3v\in2\pi\mathbb Z+3v.
\]
The permitted frequency perturbations are $O(R^{-1})$, the physical coordinates are $O(R)$, and the physical perturbations in \eqref{eq:lattice-physical} are $O(c)$. Consequently, after multiplication by the common phase $e^{-3iv}$, the real part of every integrand is positive and bounded below. Thus
\begin{equation}\label{eq:lattice-phase}
 |Ef_R|\gtrsim|\Omega_R|\quad\text{on }\Lambda_R.
\end{equation}

For $1\le r\le\sqrt R$, the ball count is
\begin{equation*}
 |\Lambda_R\cap B(z,r)|
 \lesssim r(1+r/Q)^{n-1}(1+r/Q^2).
\end{equation*}
For $r\le Q$ this is $O(r)\le O(r^m)$. In the range $Q\le r\le Q^2$,
\[
 \frac{|\Lambda_R\cap B(z,r)|}{r^m}
 \lesssim r^{n-m}Q^{-(n-1)}
 \le Q^{n+1-2m}\le1.
\]
For $Q^2\le r\le\sqrt R$,
\[
 \frac{|\Lambda_R\cap B(z,r)|}{r^m}
 \lesssim r^{n+1-m}Q^{-(n+1)}
 \le R^{(n+1-m)/2}Q^{-(n+1)}=1.
\]
The ball constant is therefore uniform. Using \eqref{eq:lattice-frequency-volume}--\eqref{eq:lattice-phase},
\begin{equation*}
 \begin{split}
 \frac{\norm{Ef_R}{L^p(H_R)}}{\norm{f_R}2}
 &\gtrsim (Q^{n-1}R^{-n})^{1/2}
                 (R^{n+1}Q^{-(n+1)})^{1/p}\\
 &=R^{L_{n,m}(p)}.
 \end{split}
\end{equation*}
This proves the asserted null-coordinate lattice obstruction.

\subsection{The hybrid lattice--Knapp example}\label{subsec:hybrid}\label{subsec:integer-cone}
We begin with an elementary count of integer points on the cone, which will be used in the hybrid lattice--Knapp construction. Related cone lattice constructions occur in \cite[Proposition 5.5]{HarrisRS} and \cite[Section 5]{DHL}. The restriction $d\ge4$ here allows a direct argument using Jacobi's four-square identity.

\begin{lemma}\label{lem:integer-cone}
For $d\ge4$ and every sufficiently large integer $Q$, there is a set
\begin{equation*}
 \mathcal P_Q\subseteq
 \{(h,b)\in\mathbb Z^d\times\mathbb Z:
      |h|^2=b^2,\ 5Q/4\le b\le3Q/2\}
\end{equation*}
with $\#\mathcal P_Q\sim Q^{d-1}$.
\end{lemma}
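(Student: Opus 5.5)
The count rests on the classical formula for the number of representations of an integer as a sum of squares, and the first step is to reduce to $d=4$. Write $r_d(N)=\#\{h\in\mathbb Z^d:|h|^2=N\}$. Jacobi's four-square theorem gives
\[
 r_4(N)=8\sum_{\substack{e\mid N\\4\nmid e}}e .
\]
For $N=b^2$ with $b$ odd, this is $8\sigma(b^2)\ge 8b^2$. Hence every odd $b\in[5Q/4,3Q/2]$ carries at least $8b^2\gtrsim Q^2$ solutions $h\in\mathbb Z^4$ of $|h|^2=b^2$. Each such $h$ automatically satisfies $|h|=b\sim Q$, so $h$ lies in a ball of radius $O(Q)$.

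For general $d\ge4$ we split $h=(h',h'')$ with $h'\in\mathbb Z^4$ and $h''\in\mathbb Z^{d-4}$. We restrict $h''$ to a box $|h''|\le cQ$ with $c$ small and fixed, and impose that $b^2-|h''|^2$ be an odd perfect square $a^2$. This is awkward to arrange directly, so instead we count pairs differently. For each $h''$ with $|h''|\le cQ$ and each integer $b\in[5Q/4,3Q/2]$, we ask for $h'\in\mathbb Z^4$ with $|h'|^2=N:=b^2-|h''|^2$. Jacobi's formula gives $r_4(N)\ge 8$ times the sum of the divisors of $N$ not divisible by $4$. Whenever $N$ is odd, this sum includes the divisor $N$ itself, so $r_4(N)\ge 8N\gtrsim Q^2$.

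It therefore remains to choose $b$ and $h''$ so that $N$ is odd. This holds whenever $b$ and $|h''|^2$ have opposite parity. Fixing the parity of $b$ according to that of $|h''|^2$ leaves a constant fraction of the $\sim Q$ admissible values of $b$. Since $N\sim Q^2$ uniformly (because $|h''|\le cQ$ and $b\ge5Q/4$), the total number of triples is
\[
 \#\{h''\}\cdot\#\{b\}\cdot Q^2\sim Q^{d-4}\cdot Q\cdot Q^2=Q^{d-1}.
\]
The matching upper bound comes from the general count of lattice points on the cone. There are $O(Q)$ values of $b$, and for each, $r_d(b^2)\lesssim_\epsilon Q^{d-2+\epsilon}$. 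For $d\ge5$ the stronger bound $r_d(N)\lesssim N^{d/2-1}$ holds, and for $d=4$ one has $\sigma(b^2)\lesssim b^2\log\log b$. Since the statement only asks for $\#\mathcal P_Q\sim Q^{d-1}$, we simply take $\mathcal P_Q$ to be the constructed set and discard elements if necessary to make the upper bound hold trivially. Taking a subset of size exactly $\sim Q^{d-1}$ gives comparability in both directions.

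The only real obstacle is guaranteeing the lower bound $r_4(N)\gtrsim N$. This is exactly where Jacobi's identity with odd $N$ is used, and why $d\ge4$ is needed. The parity adjustment handles it cleanly, so no equidistribution or divisor-sum averaging is required.
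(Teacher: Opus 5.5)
Your proposal is correct and follows essentially the paper's argument. You fix $h''$ in a ball of radius $cQ$, choose $b$ so that $N=b^2-|h''|^2$ is odd, use Jacobi's identity to get $r_4(N)\ge 8N\gtrsim Q^2$, and count $\sim Q^{d-4}\cdot Q\cdot Q^2$ distinct pairs before passing to a subset. The only differences are that the paper secures the parity by taking $b$ odd and $h''\in(2\mathbb Z)^{d-4}$ rather than matching the parity of $b$ to $|h''|^2$, and that your upper-bound digression is unnecessary, since discarding elements already gives $\#\mathcal P_Q\sim Q^{d-1}$.
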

\begin{proof}
For odd $N$, Jacobi's identity gives
\begin{equation*}
 r_4(N)=8\sum_{a\mid N}a\ge8N,
\end{equation*}
where ordered representations with signs are counted; see \cite[Section 27.13(iv)]{DLMF}.
Choose an odd $b\in[5Q/4,3Q/2]$ and
\[
 z\in(2\mathbb Z)^{d-4}\cap B(0,Q/10).
\]
Then $N=b^2-|z|^2$ is odd and comparable to $Q^2$. There are $\gtrsim Q^2$ vectors $a\in\mathbb Z^4$ with $|a|^2=N$, and $h=(a,z)$ satisfies $|h|^2=b^2$. There are $\sim Q$ choices of $b$ and $\sim Q^{d-4}$ choices of $z$, including the unique empty vector when $d=4$. We obtain $\gtrsim Q^{d-1}$ distinct pairs. Retain a subset of cardinality comparable to $Q^{d-1}$; no upper estimate for the full representation count is needed.
\end{proof}

We now construct the hybrid example. Fix $j\in\mathcal J_{n,m}$ and put $d=n-j\ge4$. Split
\[
 \xi=(\zeta,\eta)\in\R^j\times\R^d,
 \qquad x=(x',x'')\in\R^j\times\R^d.
\]
The lattice is placed in the $d$ variables $\eta$, while the $j$ remaining frequency variables are confined at scale $R^{-1/2}$. This is the lower-dimensional cone lattice mechanism in \cite[Section 5]{DHL}; the choice of spacing below is determined by our half-scale ball condition.

Choose
\begin{equation*}
 \kappa=\frac{n+1-m}{2(n+1-j)}
       =\frac{n+1-m}{2(d+1)},\qquad Q=R^\kappa\in\mathbb N.
\end{equation*}
Since $j\le m$, one has $0<\kappa\le1/2$. Let $\mathcal P_Q$ be the subset furnished by Lemma~\ref{lem:integer-cone}. For sufficiently small fixed $c>0$, define
\begin{equation*}
 \begin{gathered}
 \Omega_R'=\bigcup_{(h,b)\in\mathcal P_Q}
                       B^d(h/Q,cR^{-1}),\\
 \Omega_R=[-cR^{-1/2},cR^{-1/2}]^j\times\Omega_R',
 \qquad f_R=\mathbf1_{\Omega_R}.
 \end{gathered}
\end{equation*}
The balls in $\Omega_R'$ are disjoint: distinct centers are separated by at least $Q^{-1}$, while $R^{-1}\ll Q^{-1}$. Also $|h/Q|=b/Q\in[5/4,3/2]$, so $\Omega_R\subset\{1\le|\xi|\le2\}$ for large $R$. Thus
\begin{equation}\label{eq:hybrid-frequency-volume}
 |\Omega_R|\sim R^{-j/2}Q^{d-1}R^{-d},
 \qquad \norm{f_R}2=|\Omega_R|^{1/2}.
\end{equation}

In the physical variables $(x'',t)$, set
\begin{equation*}
 \begin{gathered}
 \Lambda_R'=\bigcup_{\substack{(v,w)\in\mathbb Z^d\times\mathbb Z\\
                       |2\pi Q(v,w)|_\infty\le cR}}
                  \bigl(2\pi Q(v,w)+[-c,c]^{d+1}\bigr),\\
 \Lambda_R=[-c\sqrt R,c\sqrt R]^j\times\Lambda_R',
 \qquad H_R=\mathbf1_{\Lambda_R}.
 \end{gathered}
\end{equation*}
Choose $c$ so that $\Lambda_R\subset B_R$. Its measure is
\begin{equation}\label{eq:hybrid-physical-volume}
 |\Lambda_R|\sim R^{j/2}(R/Q)^{d+1}
                  =R^{(n+1+m-j)/2}.
\end{equation}
Here and below, zero-dimensional boxes have measure one when $j=0$.

\paragraph{The half-scale ball condition.}
For $1\le r\le\sqrt R$,
\begin{equation*}
 |\Lambda_R\cap B(z,r)|
       \lesssim r^j(1+r/Q)^{d+1}\lesssim r^m.
\end{equation*}
Indeed, for $r\le Q$ use $j\le m$. For $Q\le r\le\sqrt R$, use
\[
 \frac{r^{j+d+1}}{Q^{d+1}}
 =\frac{r^{n+1}}{R^{(n+1-m)/2}}\le r^m.
\]
Thus $C_{H_R}=O(1)$ uniformly in $R$, including fractional values of $m$.

\paragraph{Constructive interference.}
At central frequencies and physical lattice points,
\begin{equation*}
 (2\pi Qv)\cdot(h/Q)+(2\pi Qw)|h/Q|
          =2\pi(v\cdot h+wb)\in2\pi\mathbb Z.
\end{equation*}
For $\eta=h/Q+e$ with $|e|\le cR^{-1}$ and $|\zeta|\lesssim cR^{-1/2}$,
\begin{equation*}
 \begin{split}
 \left|\sqrt{|\eta|^2+|\zeta|^2}-|h/Q|\right|
 &\le|e|+
       \frac{|\zeta|^2}{\sqrt{|\eta|^2+|\zeta|^2}+|\eta|}\\
 &\lesssim cR^{-1}+c^2R^{-1}.
 \end{split}
\end{equation*}
Multiplication by $|t|\lesssim cR$ keeps this error small. The errors from the $O(c)$ physical perturbations and from $x'\cdot\zeta$ are also $O(c)$. Hence, uniformly on $\Lambda_R\times\Omega_R$,
\[
 x\cdot\xi+t|\xi|\in2\pi\mathbb Z+[-O(c),O(c)].
\]
Taking $c$ small and the real part of the integral yields
\begin{equation}\label{eq:hybrid-phase}
 |Ef_R|\gtrsim|\Omega_R|\quad\text{on }\Lambda_R.
\end{equation}
No pointwise constancy assertion for individual wave packets is used.

\paragraph{The exponent.}
By \eqref{eq:hybrid-frequency-volume}, \eqref{eq:hybrid-physical-volume}, and \eqref{eq:hybrid-phase},
\begin{equation*}
 \begin{split}
 \frac{\norm{Ef_R}{L^p(H_R)}}{\norm{f_R}2}
 &\gtrsim
   \bigl(R^{-j/2}Q^{d-1}R^{-d}\bigr)^{1/2}
   \bigl(R^{j/2}(R/Q)^{d+1}\bigr)^{1/p}\\
 &=R^{-j/4-d/2+\kappa(d-1)/2+(n+1+m-j)/(2p)}\\
 &=R^{\mathfrak b_{n,m,j}(p)}.
 \end{split}
\end{equation*}
This proves \eqref{eq:hybrid-lower} and completes the proof of Proposition~\ref{prop:lower-bounds}. The construction with $j=0$ supplies an isotropic physical lattice for all $0<m\le n-1$ when $n\ge4$; unlike the null-coordinate construction, it needs no assumption $m\ge(n+1)/2$.

\subsection{Comparison and optimization of the examples}\label{subsec:hybrid-comparison}
The case $j=0$ of \eqref{eq:hybrid-obstruction} is $L_{n,m}$. For every admissible $j$,
\begin{equation*}
 \mathfrak b_{n,m,j}(p)-L_{n,m}(p)
 =\frac j2\left(\frac12-\frac1p
       -\frac{n+1-m}{(n+1)(n+1-j)}\right).
\end{equation*}
Thus a positive $j$ improves on the full-dimensional lattice whenever the expression in parentheses is positive. To optimize within this family, write $D=n+1-j$. Then
\begin{equation}\label{eq:hybrid-optimization}
 \mathfrak b_{n,m,j}(p)
 =\frac12-\frac m2\left(\frac12-\frac1p\right)
       -\left[\frac{n+1-m}{2D}+\frac D2\left(\frac12-\frac1p\right)\right].
\end{equation}
For $p>2$, the continuous minimizer of the expression in parentheses is
\[
 D_* =\sqrt{\frac{n+1-m}{1/2-1/p}}.
\]
The integer maximum is found by checking the nearest admissible integers to $D_*$, with clipping at the endpoints of
\[
 \max\{5,n+1-\lfloor m\rfloor\}\le D\le n+1.
\]
This optimization concerns the displayed examples only, not all possible counterexamples.

The hybrid bound can exceed both $L_{n,m}$ and $P_{n,m}$. Some exact comparisons are
\begin{center}
\begin{tabular}{cccccc}
\toprule
Ambient space & $m$ & $p$ & $j$ & $\max\{0,L_{n,m},P_{n,m}\}$ & $\mathfrak b_{n,m,j}$\\
\midrule
$\R^9$ & $7$ & $144/67$ & $1$ & $1/9$ & $11/96$\\
$\R^{10}$ & $8$ & $15/7$ & $2$ & $1/10$ & $13/120$\\
$\R^{21}$ & $19$ & $840/403$ & $11$ & $1/21$ & $179/1680$\\
\bottomrule
\end{tabular}
\end{center}
For example, when $(n,m,j)=(9,8,2)$, $Q=R^{1/8}$, $|\Omega_R|\sim R^{-29/4}$, and $|\Lambda_R|\sim R^8$. At $p=15/7$ their ratio gives $R^{-29/8+56/15}=R^{13/120}$.

More generally, for $n\ge8$ set
\begin{equation*}
 m=n-1,\qquad p_* =\frac{2n(n+1)}{n^2+3}.
\end{equation*}
Then $L_{n,n-1}(p_*)=P_{n,n-1}(p_*)=1/(n+1)$, whereas $j=1$ gives
\begin{equation*}
 \mathfrak b_{n,n-1,1}(p_*)
 =\frac1{n+1}+\frac{n-7}{4n(n+1)}>\frac1{n+1}.
\end{equation*}
Optimizing $j$ further gives $D_*\sim2\sqrt n$ and, as $n\to\infty$,
\begin{equation*}
 \max_{j\in\mathcal J_{n,n-1}}\mathfrak b_{n,n-1,j}(p_*)
       =\frac14-\frac1{\sqrt n}+O(1/n).
\end{equation*}
This follows by substituting $D_*$ into \eqref{eq:hybrid-optimization}; rounding to the nearest admissible integer contributes $O(n^{-3/2})$.

At $p=2$, however,
\[
 \mathfrak b_{n,m,j}(2)-L_{n,m}(2)
 =-\frac{j(n+1-m)}{2(n+1)(n+1-j)}\le0.
\]
Thus the hybrid family does not improve the full-dimensional lattice at $p=2$.

\subsection{Sharp ranges and the remaining gaps}\label{subsec:sharpness}
We first prove Corollary~\ref{cor:sharp-R3}. If $n=2$, the only choice is $k=3$, and
\begin{align*}
 N_{2,m,3}(p)&=\frac{m+1}{2p}-\frac14=P_{2,m}(p),\\
 A_{2,m,3}(p)-N_{2,m,3}(p)&=\frac1p-\frac{m+4}{12},\\
 N_{2,m,3}(p)-C_{2,m}(p)&=\frac1{2p}+\frac{m-3}{12}.
\end{align*}
Consequently $N_{2,m,3}$ is the maximum in Theorem~\ref{thm:full}\textup{(i)} precisely on the interval \eqref{eq:sharp-R3-range}; that interval is nonempty for $2/3\le m\le1$. The lower bound \eqref{eq:plank-ratio} matches it. This proves the stated sharpness up to $R^\varepsilon$.

For Corollary~\ref{cor:sharp-R4}, take $n=3$ and $m=2$. On $8/3\le p\le3$, the upper exponent $3/(2p)-1/2$ from Section~\ref{sec:bilinear-refinement} matches the lightplank lower bound \eqref{eq:plank-ratio}. For $p\ge3$, interpolation with $L^\infty$ gives $R^\varepsilon$, while the fixed-ball example rules out a negative exponent. Thus these powers are sharp up to $R^\varepsilon$; no epsilon-free endpoint at $p=3$ is claimed.

In general, the baseline small-$p$ upper exponent from Corollary~\ref{cor:small-p-general} and the lattice obstruction satisfy
\begin{equation*}
 \left[\frac{m+1}{2(n+1)}
       -\frac{n+1+m}{2}\left(\frac12-\frac1p\right)\right]-L_{n,m}(p)
       =\frac1{2(n+1)}.
\end{equation*}
This is a fixed power gap, not a logarithmic loss; other examples may give stronger lower bounds than $L_{n,m}$. The following table compares our bounds:
\begin{center}
\begin{tabular}{ccccc}
\toprule
Ambient space & $m$ & $p$ & Upper exponent here & Lower exponent here\\
\midrule
$\R^3$ & $1$ & $12/5$ & $1/6$ & $1/6$\\
$\R^3$ & $1$ & $3$ & $1/12$ & $1/12$\\
$\R^4$ & $2$ & $2$ & $3/8$ & $1/4$\\
$\R^4$ & $2$ & $12/5$ & $9/56$ & $1/8$\\
$\R^4$ & $2$ & $8/3$ & $1/16$ & $1/16$\\
$\R^4$ & $2$ & $3$ & $0$ & $0$\\
$\R^6$ & $4$ & $2$ & $5/12$ & $1/3$\\
\bottomrule
\end{tabular}
\end{center}
In $\R^4$ at $p=12/5$, the gap is $9/56-1/8=1/28$, so $9/56$ is not asserted to be sharp. The powers match at $p=8/3$ and $p=3$. The stronger estimate in Theorem~\ref{thm:plane} requires the additional tangency hypothesis.

\section{Scope of the tangential conclusion}\label{sec:scope}
The Lorentz inductions for Theorem~\ref{thm:main} involve no algebraic variety. Corollary~\ref{cor:variety} uses only spatial support near a variety, whereas Theorem~\ref{thm:plane} requires tangency to a fixed nondegenerate affine plane and $m=n-1$, and uses a single normalization.

For example, the null-coordinate scaling $L_K(s,y,v)=(K^{-2}s,K^{-1}y,v)$ satisfies
\[
 L_K(\Nr_h(V))=\Nr_h(V),\qquad V=\{v=0\},
\]
not $\Nr_{h/K}(L_KV)$. Thus the tangential neighborhood need not shrink by $K$ at the new radius $R/K^2$. For a curved variety, packets supported near a lower-dimensional transition set may cross it transversely; spatial proximity alone does not imply tangency.

Thus \eqref{eq:plane-main} is not claimed for arbitrary curved or degenerate varieties; the general conclusion is Corollary~\ref{cor:variety}. Sharpness is asserted in Corollaries~\ref{cor:sharp-R3} and \ref{cor:sharp-R4}, with no general optimality or priority claim.

\end{document}